\title[]{Extremal length functions are log-plurisubharmonic}
\author{Hideki Miyachi}
\address{Department of Mathematics,
Graduate School of Science,
Osaka University,
Machikaneyama 1-1, Toyonaka, Osaka 560-0043, Japan}
\subjclass[2010]{32G15, 30F45, 31C10}
\keywords{Teichm\"uller theory, Teichm\"uller distance, Extremal length, Plurisubharmonic functions}
\newtheorem{theorem}{Theorem}[section]
\newtheorem{lemma}{Lemma}[section]
\newtheorem{corollary}{Corollary}[section]
\newtheorem{remark}{Remark}[section]
\newtheorem{convention}{Convention}
\newcommand{\ext}{{\rm Ext}}
\newcommand{\teich}[1]{\mathcal{T}_{#1}}
\newcommand{\quaddiff}[1]{\mathcal{Q}_{#1}}
\newcommand{\ThurstonSymplectic}[3]{\omega_{Th}\left( #1\,,\,#2\right)_{#3}}
\newcommand{\horizontal}{\mathbf{h}}
\newcommand{\vertical}{\mathbf{v}}
\newcommand{\slice}{E^{v}}
\newcommand{\complexstructure}{\mathcal{J}}
\newcommand{\douadyhubbard}{{\rm DH}}
\newcommand{\levi}[1]{{\mathcal L}(#1)}
\newcommand{\Mod}{{\rm Mod}}
\newcommand{\Leb}[1]{\frac{i}{2}d{#1}\wedge d\overline{#1}}
\begin{document}
\maketitle

\begin{abstract}
In this paper,
we show that the extremal length functions on Teichm\"uller space
are log-plurisubharmonic.
As a corollary,
we obtain an alternative proof of L.Liu and W.Su's results
on the plurisubharmonicity of extremal length functions.
We also obtain alternative proofs of S.Krushkal's results that
a function defined by the Teichm\"uller distance
from a reference point
is plurisubharmonic,
and the Teichm\"uller space is hyperconvex.
To show the log-plurisubharmonicity,
we give an explicit formula of the Levi form of the extremal length functions
in generic case.
\end{abstract}

\tableofcontents

\section{Introduction}
\subsection{Results}
A positive function $U$ on a complex manifold $N$ is said to be
\emph{log-plurisubharmonic} if
$\log U$ is plurisubharmonic.
Any log-plurisubharmonic function is plurisubharmonic
(cf. \S\ref{subsec:log-plurisubharmonicity}).

Let $\teich{g,m}$ be the Teichm\"uller space of Riemann surfaces
of analytically finite type $(g,m)$ with $2g-2+m>0$.
One of the aims of this paper is to give
the following theorem,
which improves
a result by L.Liu and W. Su
in \cite{2012arXiv1210.0743L}.

\begin{theorem}[Log-plurisubharminicity]
\label{thm:pluri-subharmonicity}
Extremal length functions on $\teich{g,m}$
are log-plurisubharmonic.
\end{theorem}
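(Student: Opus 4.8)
The plan is to show that $u:=\log\ext_F$ is plurisubharmonic on $\teich{g,m}$ for every $F\in\mf$; since plurisubharmonicity is tested on complex lines, this amounts to showing that $\zeta\mapsto u(X_\zeta)$ is subharmonic for every holomorphic disk $\zeta\in\mathbb{D}\mapsto X_\zeta\in\teich{g,m}$, i.e.\ that the Levi form $\partial_\zeta\partial_{\bar\zeta}u(X_\zeta)$ is nonnegative. Two soft reductions come first. Since $\ext_F(X)$ is jointly continuous in $(F,X)$, since generic measured foliations are dense in $\mf$, and since a locally uniform limit of plurisubharmonic functions is plurisubharmonic, it suffices to treat generic $F$. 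For such $F$ the locus of $X$ at which the Hubbard--Masur quadratic differential $q_{F,X}$ fails to be generic (simple zeros, no saddle connection, minimal horizontal foliation) is a thin closed set, met by a generic holomorphic disk only in isolated points; by continuity of $\ext_F$ and removability of such exceptional sets for continuous plurisubharmonic functions, it is enough to verify subharmonicity of $\zeta\mapsto u(X_\zeta)$ near a base point where $q_{F,X}$ is generic. The problem is thereby localized to a Levi-form computation in the generic case.

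For the first variation one combines the Hubbard--Masur identity $\ext_F(X)=\|q_{F,X}\|=\int_X|q_{F,X}|$ with Gardiner's formula: if the Kodaira--Spencer class of the family at $\zeta$ is represented by a Beltrami differential $\dot\mu_\zeta$ on $X_\zeta$, then
\[
\partial_\zeta\ext_F(X_\zeta)=c\int_{X_\zeta}\dot\mu_\zeta\,q_{F,X_\zeta}
\]
for a universal constant $c$. In particular $\ext_F(X_\zeta)$ is real-analytic near the base point and its $(1,0)$-derivative is the pairing of the deformation with the Hubbard--Masur differential.

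The core step is the second variation. Differentiating the displayed formula involves the variation $\partial_{\bar\zeta}q_{F,X_\zeta}$ of the Hubbard--Masur differential, which depends only real-analytically on the surface: one writes the first-order deformation of $q_{F,X_\zeta}$ as a holomorphic quadratic differential plus a $\bar\partial$-exact term, the holomorphic part being pinned down by the requirement that the horizontal foliation remain equal to $F$ (equivalently, that the periods of $\sqrt{q}$ tested against $F$ are frozen). Genericity of $q_{F,X}$ --- no saddle connections, simple zeros, minimality --- is precisely what makes this decomposition well defined and its dependence on the surface controllable. Substituting back produces an explicit formula for the Levi form of $\ext_F$, which after rearrangement takes the shape
\[
\partial_\zeta\partial_{\bar\zeta}\ext_F(X_\zeta)=\int_{X_\zeta}|q_{F,X_\zeta}|\,|g_\zeta|^2,
\]
with $g_\zeta$ a function on $X_\zeta$ built from $\dot\mu_\zeta$ and the flat structure of $q_{F,X_\zeta}$, normalized so that $\int_{X_\zeta}|q_{F,X_\zeta}|\,g_\zeta$ is a constant multiple of $\partial_\zeta\ext_F(X_\zeta)$. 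This is the promised explicit Levi form; being nonnegative, it already recovers the plurisubharmonicity of $\ext_F$ due to Liu--Su.

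Finally one passes to the logarithm,
\[
\partial_\zeta\partial_{\bar\zeta}\log\ext_F(X_\zeta)=\frac{\partial_\zeta\partial_{\bar\zeta}\ext_F(X_\zeta)}{\ext_F(X_\zeta)}-\frac{|\partial_\zeta\ext_F(X_\zeta)|^2}{\ext_F(X_\zeta)^2},
\]
so log-plurisubharmonicity is equivalent to $\ext_F\cdot\partial_\zeta\partial_{\bar\zeta}\ext_F\ge|\partial_\zeta\ext_F|^2$. Inserting the explicit formula and the first-variation formula, this reads
\[
\Big(\int_{X_\zeta}|q_{F,X_\zeta}|\Big)\Big(\int_{X_\zeta}|q_{F,X_\zeta}|\,|g_\zeta|^2\Big)\ge\Big|\int_{X_\zeta}|q_{F,X_\zeta}|\,g_\zeta\Big|^2,
\]
which is exactly the Cauchy--Schwarz inequality for the finite measure $|q_{F,X_\zeta}|$ on $X_\zeta$. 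Hence $\log\ext_F$ is subharmonic along every holomorphic disk through a generic base point, so it is plurisubharmonic there, and by the two reductions it is plurisubharmonic on all of $\teich{g,m}$ for every $F\in\mf$, which is the assertion. The step I expect to be the main obstacle is the second variation: computing the variation of the Hubbard--Masur differential under an arbitrary (not necessarily Teichm\"uller-geodesic) holomorphic deformation, and then organizing the resulting expression into the single integral $\int|q|\,|g|^2$ with the correctly normalized weight $g$ --- it is this normalization, available only in the generic case, that makes the concluding inequality an exact instance of Cauchy--Schwarz rather than merely a positivity statement.
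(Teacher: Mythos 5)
Your core computation is the same as the paper's: an explicit Levi form at points where $q_{F,x}$ is generic, Gardiner's first-variation formula, and the Cauchy--Schwarz inequality for the finite measure $|q_{F,x}|$. But two of your steps have genuine gaps. First, the reduction to a generic base point is not sound as stated. Continuity plus plurisubharmonicity off a closed thin set does not imply plurisubharmonicity (on $\mathbb{D}$, $-|{\rm Re}\,\lambda|$ is continuous and harmonic off a line but not subharmonic), so ``removability of such exceptional sets for continuous plurisubharmonic functions'' needs the exceptional set to be (pluri)polar, which you have not established; moreover the conditions you list for genericity of $q_{F,X}$ (no saddle connection, minimal horizontal foliation) fail on a set that is \emph{dense} in $\teich{g,m}$ as $X$ varies with $F$ fixed, not a thin closed set, and verifying subharmonicity only along ``generic'' disks does not by itself give plurisubharmonicity. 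The paper sidesteps all of this: it approximates by \emph{essentially complete} measured foliations, for which the Hubbard--Masur differential $q_{F,x}$ is generic (simple zeros, simple poles at the punctures) at \emph{every} $x\in\teich{g,m}$, so there is no exceptional locus at all, and the general $F$ is then handled by the standard limit theorem for plurisubharmonic functions. If you keep your route, you must either prove the bad locus is pluripolar or switch to the essentially complete class.

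Second, the step you yourself flag as the main obstacle --- the second variation of $\ext_x(F)$ along an arbitrary holomorphic disk with the vertical foliation frozen --- is not carried out, and the unspecified constants there are exactly where the theorem lives: Cauchy--Schwarz only yields $\ext_{x}(F)\,\levi{\ext_\cdot(F)}[v,\overline v]\ge C\,|(\ext_\cdot(F))_*[v]|^2$ with $C$ determined by those constants, and log-plurisubharmonicity needs $C\ge 1$. The paper pins this down by passing to the double cover of $\sqrt{q}$, using the Douady--Hubbard period coordinates on the odd homology (Lemmas 4.1--4.2, Lemma 3.3 that ${\rm Re}\,\chi_q$ is constant on the slice $\vertical(q)=F$), and the Riemann bilinear relation, obtaining
\begin{equation*}
\levi{\ext_{\cdot}(F)}[v,\overline{v}]=2\int_{M_{0}}\frac{|\eta_{v}|^{2}}{|q_{0}|},
\qquad
(\ext_{\cdot}(F))_{*}[v]=-\int_{M_{0}}\frac{\overline{\eta_{v}}}{|q_{0}|}\,q_{0},
\end{equation*}
so that Cauchy--Schwarz in fact gives the stronger inequality with $C=2$ (Corollary 5.1), which the paper needs anyway to prove plurisuperharmonicity of $-1/(\ext_x(F_1)+\ext_x(F_2)+c)$ and hence hyperconvexity; your weaker $C=1$ statement would suffice for Theorem 1.1 alone, but only after the variational formula, with its exact normalization, is actually established.
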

Moreover,
we will also observe that
for generic measured foliations,
the log-extremal length functions are real analytic strictly plurisubharmonic functions on $\teich{g,m}$
(cf. Theorem \ref{thm:strong-positivity2-2}).

When $3g-3+m=1$,
Theorem \ref{thm:pluri-subharmonicity} follows
from the direct calculation
(cf. \S\ref{subsec:examples}).
Hence,
in the discussion in the later section,
we always assume that $3g-3+m\ge 2$.
We will prove Theorem \ref{thm:pluri-subharmonicity}
in \S\ref{subsec:log-plurisubharmonicity}.
In fact,
Theorem \ref{thm:pluri-subharmonicity} is derived from
a more stronger property of the extremal length functions
than that given in Theorem \ref{thm:pluri-subharmonicity}
(cf. Theorem \ref{thm:plurisuperharmonicitiy-of-reciprocals}).
However,
the property in Theorem \ref{thm:pluri-subharmonicity}
is also important as we see below
(cf. Corollaries \ref{coro:plurisubharmoncity-of-positive-polynomial}
and \ref{coro:Teichmuller distance}).
%
To show Theorem \ref{thm:pluri-subharmonicity},
we will give an explicit formula of the Levi forms of
extremal length functions
in generic case.
(cf. Theorem \ref{thm:leviform}).

The following immediately follows from Theorem \ref{thm:pluri-subharmonicity} (cf. \cite[Colollary 2.6.9]{MR1150978}).

\begin{corollary}[Log-plurisubharmonicity of positive polynomials]
\label{coro:plurisubharmoncity-of-positive-polynomial}
Let $F_{1},\cdots,F_{n}$ be measured foliations.
For any polynomial $P$ of $n$-variables with positive coefficients,
the function
$$
\teich{g,m}\ni x\mapsto
P(
\ext_{x}(F_{1}),\cdots,\ext_{x}(F_{n})
)
\in \mathbb{R}
$$
is log-plurisubharmonic.
\end{corollary}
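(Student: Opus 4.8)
The plan is to deduce the statement from Theorem \ref{thm:pluri-subharmonicity} together with two stability properties of the class of log-plurisubharmonic functions: stability under finite products with non-negative integer multiplicities, and stability under positive linear combinations. The first is elementary, and the second is exactly the content of the cited result \cite[Corollary 2.6.9]{MR1150978}; so the corollary is genuinely immediate once these are in place.

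First I would fix notation: write $P(t_{1},\dots,t_{n})=\sum_{\alpha}c_{\alpha}t^{\alpha}$, where the sum runs over a finite set of multi-indices $\alpha=(\alpha_{1},\dots,\alpha_{n})\in\mathbb{Z}_{\ge 0}^{n}$, where $t^{\alpha}=t_{1}^{\alpha_{1}}\cdots t_{n}^{\alpha_{n}}$, and where each coefficient $c_{\alpha}>0$. By Theorem \ref{thm:pluri-subharmonicity}, each function $x\mapsto\ext_{x}(F_{i})$ is positive and log-plurisubharmonic, i.e.\ $x\mapsto\log\ext_{x}(F_{i})$ is plurisubharmonic on $\teich{g,m}$. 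Consequently, for each multi-index $\alpha$ appearing in $P$, the positive function
$$
\teich{g,m}\ni x\longmapsto c_{\alpha}\prod_{i=1}^{n}\ext_{x}(F_{i})^{\alpha_{i}}\in\mathbb{R}
$$
has logarithm $\log c_{\alpha}+\sum_{i=1}^{n}\alpha_{i}\log\ext_{x}(F_{i})$, which is the sum of a constant and non-negative multiples of plurisubharmonic functions, hence plurisubharmonic. Thus every monomial term of $x\mapsto P(\ext_{x}(F_{1}),\dots,\ext_{x}(F_{n}))$ is log-plurisubharmonic; here it is essential that the exponents $\alpha_{i}$ are non-negative integers, so that no reciprocals occur.

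Finally, $x\mapsto P(\ext_{x}(F_{1}),\dots,\ext_{x}(F_{n}))$ is a finite sum of the positive log-plurisubharmonic functions just produced, and I would invoke \cite[Corollary 2.6.9]{MR1150978} to conclude that such a sum is again log-plurisubharmonic. Concretely, if $v_{1},\dots,v_{k}$ are plurisubharmonic then $\log(e^{v_{1}}+\cdots+e^{v_{k}})$ is plurisubharmonic, because the map $(\tau_{1},\dots,\tau_{k})\mapsto\log(e^{\tau_{1}}+\cdots+e^{\tau_{k}})$ is convex and non-decreasing in each argument, and a convex function non-decreasing in each variable composed with plurisubharmonic functions is plurisubharmonic. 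Applying this with $v_{j}$ the logarithm of the $j$-th monomial term completes the argument. There is no real obstacle here: everything but the last step is bookkeeping, and the last step is precisely the cited addition theorem for log-plurisubharmonic functions; the only point that requires a moment's care is the integrality and non-negativity of the exponents of $P$, which is what makes the monomial reduction legitimate.
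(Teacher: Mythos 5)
Your argument is correct and is essentially the proof the paper intends: the paper simply declares the corollary to be an immediate consequence of Theorem \ref{thm:pluri-subharmonicity} together with \cite[Corollary 2.6.9]{MR1150978}, which is exactly your closing step that a finite sum of log-plurisubharmonic functions is log-plurisubharmonic (via the convexity and monotonicity of $(\tau_{1},\dots,\tau_{k})\mapsto\log(e^{\tau_{1}}+\cdots+e^{\tau_{k}})$). Your monomial-by-monomial reduction, using that the exponents are non-negative integers, is the same bookkeeping the paper leaves implicit.
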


\subsection{Teichm\"uller distance is plurisubharmonic}
%
%
It is known that the least upper semicontinuous majorant of 
a family of plurisubharmonic funtions is either a constant function
$+\infty$ or plurisubharmonic
(cf. Theorem 5 in \cite[Chapter II, \S2]{MR0243112}).
Kerckhoff's formula \eqref{eq:Kerckhof_formula}
implies that the Teichm\"uller distance $d_{T}(x_{0},\cdot)$ from a reference point $x_{0}\in \teich{g,m}$
coincides with the half of the least upper semicontinuous majorant
of the family of log-extremal length functions.
Therefore,
we obtain an alternative proof of Krushkal's result
\cite[Corollary 3]{MR1142683}
as follows.

\begin{corollary}[Teichm\"uller distance is plurisubharmonic]
\label{coro:Teichmuller distance}
For $x_{0}\in \teich{g,m}$,
the distance function
$$
\teich{g,m}\ni x\mapsto d_{T}(x_{0},x)
$$
is plurisubharmonic.
\end{corollary}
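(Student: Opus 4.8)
\emph{Proof proposal.}
The plan is to read the statement off from Kerckhoff's formula \eqref{eq:Kerckhof_formula} together with Theorem \ref{thm:pluri-subharmonicity}. Recall that Kerckhoff's formula expresses the Teichm\"uller distance as
$$
d_{T}(x_{0},x)=\frac{1}{2}\,\log\sup_{F}\frac{\ext_{x}(F)}{\ext_{x_{0}}(F)}
=\frac{1}{2}\,\sup_{F}\bigl(\log\ext_{x}(F)-\log\ext_{x_{0}}(F)\bigr),
$$
where $F$ runs over the nonzero measured foliations (equivalently, over a compact set of normalized foliations such as $\ZZ$). First I would fix an arbitrary nonzero measured foliation $F$ and observe that, by Theorem \ref{thm:pluri-subharmonicity}, the function $x\mapsto \log\ext_{x}(F)$ is plurisubharmonic on $\teich{g,m}$; subtracting the constant $\log\ext_{x_{0}}(F)$ does not affect this, so each member of the family $\{\,x\mapsto \log\ext_{x}(F)-\log\ext_{x_{0}}(F)\,\}_{F}$ is plurisubharmonic.

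Next I would invoke the standard fact (Theorem 5 in \cite[Chapter II, \S2]{MR0243112}) that the least upper semicontinuous majorant of a family of plurisubharmonic functions is either identically $+\infty$ or plurisubharmonic. By the displayed identity, $2\,d_{T}(x_{0},\cdot)$ is the pointwise supremum of the above family, and it is finite — indeed continuous — everywhere, since $d_{T}(x_{0},\cdot)$ is a genuine distance function on $\teich{g,m}$ inducing its manifold topology; hence the alternative ``$+\infty$'' is excluded. Finally, because $d_{T}(x_{0},\cdot)$ is continuous, hence upper semicontinuous, the pointwise supremum already coincides with its own least upper semicontinuous majorant. Therefore $2\,d_{T}(x_{0},\cdot)$, and so $d_{T}(x_{0},\cdot)$, is plurisubharmonic.

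Given that Theorem \ref{thm:pluri-subharmonicity} is already in hand, there is no serious obstacle here; the one point that requires a moment's care is the interchange between the pointwise supremum and its upper semicontinuous regularization, which is precisely where the continuity of the Teichm\"uller metric is used. One should also make sure the supremum in Kerckhoff's formula is taken over nonzero foliations, so that $\log\ext_{x_{0}}(F)$ is finite; this costs nothing, since the zero foliation has vanishing extremal length and contributes nothing to the supremum.
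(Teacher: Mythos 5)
Your proposal is correct and is essentially the paper's own argument: the paper likewise combines Kerckhoff's formula \eqref{eq:Kerckhof_formula} with Theorem \ref{thm:pluri-subharmonicity} and the fact (Theorem 5 in \cite[Chapter II, \S2]{MR0243112}) that the least upper semicontinuous majorant of a family of plurisubharmonic functions is either $\equiv+\infty$ or plurisubharmonic. Your extra remarks --- that continuity of $d_{T}(x_{0},\cdot)$ rules out the $+\infty$ alternative and makes the pointwise supremum coincide with its upper semicontinuous regularization --- simply make explicit the details the paper leaves implicit.
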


\subsection{Convexities for Teichm\"uller space}
A subset $K$ in a complex manifold $N$ is said to be
\emph{disk-convex} in $N$
if for any continuous mapping $g\colon \overline{\mathbb{D}}\to N$,
holomorphic in $\mathbb{D}$,
$f(\partial \mathbb{D})\subset K$ implies
$f(\overline{\mathbb{D}})\subset K$
(cf. \cite{MR1478844}).
For $\epsilon>0$ and a measured foliation $F$,
we call the set of the form $\{x\in \teich{g,m}\mid \ext_{x}(F)<\epsilon\}$
the \emph{$\epsilon$-horoball for $F$}.
From the maximum modulus principle
for (pluri)subharmonic functions,
we obtain the following.

\begin{corollary}[Disk convexity]
\label{coro:diskconvexity}
For $\epsilon>0$ and a measured foliation $F$,
the $\epsilon$-horoball for $F$ is disk-convex  in $\teich{g,m}$.
The metric ball with respect to the Teichm\"uller distance
is also disk-convex  in $\teich{g,m}$.
\end{corollary}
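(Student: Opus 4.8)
The plan is to obtain both assertions as formal consequences of the plurisubharmonicity results established above, combined with the maximum principle for subharmonic functions; this is precisely the mechanism alluded to in the remark preceding the statement, so the proof will be short.

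First consider the horoball. Fix $\epsilon>0$ and a measured foliation $F$, and write $H=\{x\in\teich{g,m}\mid \ext_{x}(F)<\epsilon\}$ for the $\epsilon$-horoball. Let $f\colon\overline{\mathbb{D}}\to\teich{g,m}$ be continuous and holomorphic on $\mathbb{D}$ with $f(\partial\mathbb{D})\subset H$. I would examine the function $u\colon z\mapsto \ext_{f(z)}(F)$ on $\overline{\mathbb{D}}$. By Theorem \ref{thm:pluri-subharmonicity} the extremal length function $x\mapsto\ext_{x}(F)$ is log-plurisubharmonic, hence plurisubharmonic, on $\teich{g,m}$, so its composition with the holomorphic map $f$ is subharmonic on $\mathbb{D}$ (a standard fact about pullbacks of plurisubharmonic functions along holomorphic maps); moreover $u$ is continuous on $\overline{\mathbb{D}}$, since extremal length depends continuously on the point of $\teich{g,m}$. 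Because $u<\epsilon$ on the compact set $\partial\mathbb{D}$, we get $M:=\max_{\partial\mathbb{D}}u<\epsilon$, and the maximum principle for subharmonic functions yields $u\le M<\epsilon$ throughout $\overline{\mathbb{D}}$. Hence $f(\overline{\mathbb{D}})\subset H$, which is the disk-convexity of $H$.

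Next consider the metric ball. Fix $x_{0}\in\teich{g,m}$ and $r>0$, and let $B=\{x\in\teich{g,m}\mid d_{T}(x_{0},x)<r\}$. Given $f\colon\overline{\mathbb{D}}\to\teich{g,m}$ continuous, holomorphic on $\mathbb{D}$, with $f(\partial\mathbb{D})\subset B$, I would set $v\colon z\mapsto d_{T}(x_{0},f(z))$. By Corollary \ref{coro:Teichmuller distance} the function $d_{T}(x_{0},\cdot)$ is plurisubharmonic on $\teich{g,m}$, so $v$ is subharmonic on $\mathbb{D}$, and it is continuous on $\overline{\mathbb{D}}$ because $d_{T}$ is continuous and $f$ is continuous. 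Exactly as before, $v<r$ on the compact set $\partial\mathbb{D}$ forces $\max_{\partial\mathbb{D}}v<r$, and the maximum principle gives $v<r$ on $\overline{\mathbb{D}}$, i.e.\ $f(\overline{\mathbb{D}})\subset B$. Thus the Teichm\"uller metric ball is disk-convex.

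I do not expect a genuine obstacle here: the analytic content is already packaged in Theorem \ref{thm:pluri-subharmonicity} and Corollary \ref{coro:Teichmuller distance}, and what remains is the routine observation that a plurisubharmonic function restricts along a holomorphic disk to a subharmonic function together with an application of the maximum principle. The only points deserving a word of care are the boundary controls needed to run the maximum principle; these are immediate from the continuity of $\ext_{(\cdot)}(F)$ and of $d_{T}$ on $\teich{g,m}$. If one preferred to avoid invoking continuity at all, one could instead use that plurisubharmonic functions are upper semicontinuous, so that $u$ and $v$ are upper semicontinuous on $\overline{\mathbb{D}}$, still attain their suprema on $\partial\mathbb{D}$, and the maximum principle for subharmonic functions applies through the $\limsup$ estimate on the boundary; the conclusion is the same.
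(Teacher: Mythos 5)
Your proposal is correct and follows exactly the route the paper intends: the paper derives this corollary directly from the maximum modulus principle for (pluri)subharmonic functions applied to $\ext_{\cdot}(F)$ (plurisubharmonic by Theorem \ref{thm:pluri-subharmonicity}) and to $d_{T}(x_{0},\cdot)$ (plurisubharmonic by Corollary \ref{coro:Teichmuller distance}). Your handling of the boundary control via continuity (or upper semicontinuity) is the standard and correct way to make the application of the maximum principle precise.
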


About the convexity of metric balls,
A. Lenzhen and K. Rafi \cite{MR2838267} observed that
the metric ball with respect to the Teichm\"uller distance
is quasi-convex.
Furthermore,
they also showed that the extremal length function is not convex along a Teichm\"uller geodesic
in general.
This means that the intersection between a horoball of extremal length and a Teichm\"uller disk
is simply connected,
but may not be hyperbolically convex in general.

A complex manifold $N$ is said to be \emph{hyperconvex}
(in the sense of Stehl\'e)
if it admits a continuous plurisubharmonic exhaustion function
$\rho\colon N\to [-\infty,0)$
in the sense that
$\{x\in N\mid \rho(x)<c\}$ is relatively compact in $M$ for every $c<0$ (cf. \cite{MR0374493}. See also \cite{MR1210002}).
Take two essentially complete measured foliations $F$ and $G$
which are transverse in the sense that
\begin{equation}
\label{eq:transverse-mf}
i(F,H)+i(G,H)>0
\end{equation}
for all $H\in \mathcal{MF}-\{0\}$.
In Theorem \ref{thm:plurisuperharmonicitiy-of-reciprocals}
given later,
we will check that
\begin{equation}
\label{eq:extremal-length-exhausion}
\rho\colon \teich{g,m}\ni x\mapsto -\frac{1}{\ext_{x}(F)+\ext_{x}(G)+1}
\end{equation}
is a real analytic, negative, strictly plurisubharmonic exhaustion function
with lower bounds.
Thus we obtain the following.
%
\begin{corollary}[Hyperconvexitiy]
\label{coro:hyperconvexity}
Teichm\"uller space is hyperconvex.
\end{corollary}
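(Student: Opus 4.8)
The plan is to derive Corollary \ref{coro:hyperconvexity} directly from the (forthcoming) strict plurisubharmonicity of the function $\rho$ in \eqref{eq:extremal-length-exhausion}, once we establish that $\rho$ is a genuine exhaustion function with values in $[-1,0)$. So the real work is to check the properties claimed just above the statement: that for two essentially complete measured foliations $F,G$ which are transverse in the sense of \eqref{eq:transverse-mf}, the function $x\mapsto \ext_x(F)+\ext_x(G)$ is a continuous, proper function on $\teich{g,m}$ that is bounded below by a positive constant. Continuity of extremal length is classical, and positivity of $\ext_x(F)$ for any $F\in\mathcal{MF}-\{0\}$ gives a pointwise lower bound; but for the exhaustion property we need that $\ext_x(F)+\ext_x(G)\to\infty$ as $x$ leaves every compact set of $\teich{g,m}$.

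First I would recall that $\ext_x(F)+\ext_x(G)$ is uniformly bounded below: if it had a sequence of values tending to $0$ at points $x_n$, then both $\ext_{x_n}(F)\to 0$ and $\ext_{x_n}(G)\to 0$; combined with Kerckhoff's formula \eqref{eq:Kerckhof_formula} (or the fact that $\ext_x(F)\ext_x(G)\ge i(F,G)^2$, the Minsky inequality) this forces $i(F,G)=0$, contradicting \eqref{eq:transverse-mf} applied to $H=G$. In fact a positive lower bound on any single extremal length over all of $\teich{g,m}$ need not hold — $\ext_x(F)$ can degenerate — which is precisely why one needs the pair $F,G$ and the transversality hypothesis; this is the conceptual heart of the construction. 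Hence $\ext_x(F)+\ext_x(G)+1\ge c>0$, so $\rho$ takes values in $[-1/c,0)\subset[-\infty,0)$ and is in particular bounded below.

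Next I would establish properness. Suppose $x_n\to\infty$ in $\teich{g,m}$, i.e. $x_n$ eventually leaves every compact subset. Using the description of the Gardiner–Masur (or Thurston) compactification together with the continuity of extremal length thereon — or, more elementarily, the characterization that a divergent sequence in $\teich{g,m}$ has a subsequence along which some simple closed curve (hence some measured foliation) has extremal length tending either to $0$ or to $\infty$ — pick a subsequence and a nonzero $H$ with $\ext_{x_n}(H)\to 0$. By Minsky's inequality $\ext_{x_n}(F)\ge i(F,H)^2/\ext_{x_n}(H)$ and likewise for $G$, so $\ext_{x_n}(F)+\ext_{x_n}(G)\ge (i(F,H)^2+i(G,H)^2)/\ext_{x_n}(H)$, and the numerator is positive by \eqref{eq:transverse-mf}; therefore the sum blows up. Thus $\ext_x(F)+\ext_x(G)\to\infty$ as $x\to\infty$, which is exactly the statement that the sublevel sets $\{\rho<c\}=\{\ext_x(F)+\ext_x(G)+1<-1/c\}$ are relatively compact for each $c\in(-1/c_0,0)$, and for $c$ outside the range of $\rho$ the sublevel set is empty.

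Finally, I would invoke Theorem \ref{thm:plurisuperharmonicitiy-of-reciprocals} — which asserts that reciprocals $-1/(\ext_x(F)+\ext_x(G)+1)$ of this type are (real analytic, negative, strictly) plurisubharmonic — to conclude that $\rho$ is a continuous plurisubharmonic exhaustion function $\teich{g,m}\to[-\infty,0)$. By definition of hyperconvexity in the sense of Stehlé, this proves Corollary \ref{coro:hyperconvexity}. The only delicate point is the properness argument, where one must be careful that "leaving every compact set" really does produce a foliation $H$ with degenerating extremal length; this follows from the compactness of Gardiner–Masur boundary and continuity of the extremal length function on it, facts which are available in the literature and consistent with the framework of this paper.
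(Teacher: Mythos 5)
Your proposal follows the paper's route exactly: properness of $x\mapsto\ext_x(F)+\ext_x(G)$ via Minsky's inequality and the transversality condition \eqref{eq:transverse-mf}, plus Theorem \ref{thm:plurisuperharmonicitiy-of-reciprocals} for the strict plurisubharmonicity and negativity of $\rho$. The one loose step is your claim that a divergent sequence admits a single fixed nonzero $H$ with $\ext_{x_n}(H)\to 0$ after passing to a subsequence; this is not obvious (the degenerating foliations coming from Kerckhoff's formula vary with $n$, and their limit need not itself degenerate), and the paper avoids the issue in \S\ref{Kerckhoff-Minsky} by taking the minimum of $i(F,H)^2+i(G,H)^2$ over the compact set $\{H:\ext_{x_0}(H)=1\}$, which yields the quantitative bound $\ext_x(F)+\ext_x(G)\ge e^{2d_T(x_0,x)}\min\{i(F,H)^2+i(G,H)^2\mid\ext_{x_0}(H)=1\}$ and hence properness directly.
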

Corollary \ref{coro:hyperconvexity}
is already given by Krushkal in \cite{MR1119946}.
Notice that the hyperconvexity of Teichm\"uller space
also follows from the compleness of the Carath\'eodory distance
(cf. \cite{MR534932} and \cite{MR0352450}).
As an immediate corollary of the hyperconvexity
(or Corollary \ref{coro:Teichmuller distance}),
by virtue of Oka's theorem,
we deduce the following,
which is first proven by L.Bers and L.Ehrenpreis
(see \cite{MR0168800}.
See also \cite{MR0352450},
\cite{2012arXiv1210.0743L},
\cite{MR766636}, 
\cite{MR880186}).

\begin{corollary}[Holomorphic convexity]
\label{coro:stein}
Teichm\"uller space is
a domain of holomorphy.
\end{corollary}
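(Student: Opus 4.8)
The plan is to combine the convexity properties established above with the classical solution of the Levi problem. The first point to recall is that Teichm\"uller space is not merely an abstract complex manifold: Bers' embedding theorem realizes $\teich{g,m}$ as a bounded domain $\Omega\subset\mathbb{C}^{3g-3+m}$. Hence ``domain of holomorphy'' literally makes sense here, and it suffices to prove that $\Omega$ is pseudoconvex, for then Oka's theorem (the solution of the Levi problem in $\mathbb{C}^{n}$) yields that $\Omega$, and therefore $\teich{g,m}$, is a domain of holomorphy.

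To verify pseudoconvexity I would invoke the standard criterion that a domain in $\mathbb{C}^{n}$ is pseudoconvex precisely when it carries a continuous plurisubharmonic exhaustion function. Such a function is furnished by Corollary \ref{coro:hyperconvexity}; concretely, the function $\rho$ of \eqref{eq:extremal-length-exhausion}, built from the extremal lengths of a transverse pair of essentially complete measured foliations, is a continuous (indeed real analytic) plurisubharmonic exhaustion $\teich{g,m}\to[-\infty,0)$. Pushing $\rho$ forward to $\Omega$ through the Bers embedding exhibits $\Omega$ as pseudoconvex, and Oka's theorem then finishes the argument. An essentially equivalent route, the one signalled by the alternative reference to Corollary \ref{coro:Teichmuller distance}, is to use $x\mapsto d_{T}(x_{0},x)$: since the Teichm\"uller metric is complete with compact closed balls, this is a continuous exhaustion of $\teich{g,m}$, and it is plurisubharmonic by Corollary \ref{coro:Teichmuller distance}. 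One could even bypass the Levi problem in $\mathbb{C}^{n}$ and quote Grauert's theorem on the manifold directly, since Theorem \ref{thm:plurisuperharmonicitiy-of-reciprocals} produces a real-analytic \emph{strictly} plurisubharmonic exhaustion.

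I expect no serious obstacle: all the nontrivial input has already been proved. The one point requiring a little care is the bookkeeping around the Bers embedding, namely that a function which is an exhaustion of $\teich{g,m}$ remains an exhaustion of its image $\Omega$ (i.e.\ that the relevant sublevel sets are relatively compact \emph{in $\Omega$}); this is immediate because the Bers embedding is a biholomorphism onto $\Omega$.
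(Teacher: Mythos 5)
Your argument is correct and is essentially the paper's own: the paper likewise deduces holomorphic convexity "as an immediate corollary of the hyperconvexity (or Corollary \ref{coro:Teichmuller distance}), by virtue of Oka's theorem." You have merely made explicit the standard intermediate steps (Bers embedding to interpret ``domain of holomorphy'' literally, and the characterization of pseudoconvexity by a continuous plurisubharmonic exhaustion), which the paper leaves implicit.
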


\section{Notation}

\subsection{Teichm\"uller space}
Let $\Sigma_{g,m}$ be a compact orientable surface of genus $g$
with $m$-disks removed with $2g-2+m>0$.
The \emph{Teichm\"uller space} $\teich{g,m}$ of
Riemann surfaces of analytically finite type $(g,m)$
is the set of equivalence classes of
pairs $(M,f)$ consisting of a Riemann surface $M$ of analytically finite type $(g,m)$
and an orientation preserving homeomorphism $f\colon {\rm Int}(\Sigma_{g,m})\to M$.
Two marked Riemann surfaces $(M_{1},f_{1})$ and $(M_{2},f_{2})$
are \emph{equivalent} if
there is a conformal mapping $h\colon M_{1}\to M_{2}$ which homotopic to $f_{2}\circ f_{1}^{-1}$.

The Teichm\"uller space admits a canonical distance $d_{T}$,
which we call the \emph{Teichm\"uller distance}.
The Teichm\"uller distance is originally 
defined as the logarithm of 
the infimum of the maximal dilatations
of quasiconformal mappings respecting the markings
(cf. \cite[\S5]{MR1215481}).
S. Kerckhoff \cite{MR559474}
gave a geometric description of the Teichm\"uller distance
via the extremal length,
which we will recall in \eqref{eq:Kerckhof_formula} below.

\subsection{Measured foliations}
\label{subsec:measured-foliatons}
Let $\mathcal{S}$ be the set of homotopy classes
of non-trivial and non-peripheral simple closed curves on $\Sigma_{g,m}$.
Let $\mathcal{WS}$ be the set of weighted simple closed curves,
that is, the set of formal products
$t\alpha$ of non-negative number $t$ and $\alpha\in \mathcal{S}$.
The closure $\mathcal{MF}$ of the embedding
$$
\mathcal{WS}\ni t\alpha
\mapsto [\mathcal{S}\ni \beta\mapsto t\,i(\alpha,\beta)]\in
\mathbb{R}_{+}^{\mathcal{S}}
$$
is called the \emph{space of measured foliations} on $\Sigma_{g,m}$.
We consider $\mathcal{WS}$ as
a subset of $\mathcal{MF}$.
We identify $1\cdot \alpha\in \mathcal{WS}$
with $\alpha\in \mathcal{S}$.
Any measured foliation is described as a pair consisting
of a singular foliation and a transverse measure
(cf. \cite[Expos\'e 5]{MR568308}).
For instance,
$t\alpha\in \mathcal{MF}$ is a foliated annulus with core $\alpha$
which identified with an annulus
$[0,t]\times [0,1]/(x,0)\sim (x,1)$
and a transverse measure associated to $|dx|$.

It is known that
the geometric intersection number $i(\alpha,\beta)$
for $\alpha,\beta\in \mathcal{S}$
extends continuously to $\mathcal{MF}\times \mathcal{MF}$
(cf. \cite{MR847953}).

If we fix a complete hyperbolic structure on the interior of $\Sigma_{g,m}$,
any measured foliation canonically corresponds to a measured lamination.
A \emph{measured lamination} is a pair of a geodesic lamination,
which called the \emph{support},
and a transverse measure,
where a \emph{geodesic lamination} a  is a compact set 
in the interior fo $\Sigma_{g,m}$ which consists
of disjoint complete geodesics
(cf. \cite[\S1.7]{MR1144770}).
A measured foliation is said to be \emph{essentially complete}
if the support of the corresponding measured lamination is maximal,
that is,
the complement consists of ideal triangles or ideal punctured monogon
if $(g,m)\ne (1,1)$,
a punctured bigon otherwise
(cf. \cite[Definition 9.5.1, Propositions 9.5.2 and 9.5.4]{Thuston-LectureNote}).
In the interior of $\Sigma_{g,m}$,
each singularity of the associated foliation of an essentially complete measured foliation is a three prong singularity.
At any puncture,
the associated foliation has a one prong singularity
(cf. \cite[Epilogue]{MR1144770}).

\subsection{Extremal length}
\label{subsec:extremal_length}
Let $M$ be a Riemann surface and let $A$ be a doubly connected domain
on $M$.
If $A$ is conformally equivalent to a round annulus $\{1<|z|<R\}$,
we define the \emph{modulus} of $A$ by
$$
\Mod(A)=\frac{1}{2\pi}\log R.
$$
The \emph{extremal length} of
a simple closed curve $\alpha$
on $M$ is defined by
\begin{equation}
\label{eq:gemetric_definition_extremal_length}
\ext_M(\alpha)=\inf\left\{\frac{1}{\Mod(A)}\mid
\mbox{the core curve of $A\subset M$ is homotopic to $\alpha$}
\right\}.
\end{equation}
In \cite{MR559474},
Kerckhoff showed that 
if we define the extremal length of $t\alpha\in \mathcal{WS}$
by
$$
\ext_M(t\alpha)=t^2\ext_M(\alpha),
$$
then the extremal length function $\ext_M$ on $\mathcal{WS}$
extends continuously to $\mathcal{MF}$.
For $F\in \mathcal{MF}$,
the extremal length function $\ext_{\cdot}(F)$
on $\teich{g,m}$ is defined by
$$
\ext_x(F)=\ext_M(f(F)).
$$
It is known that the function
$$
\teich{g,m}\times \mathcal{MF}\ni (x,F)\mapsto \ext_{x}(F)
$$
is continuous.
Furthermore,
the extremal length function $\ext_{x}\colon\mathcal{MF}\to \mathbb{R}$
is a proper function and satisfies the quasiconformal distortion property:
$$
\ext_{y}(F)\le e^{2d_{T}(x,y)}\ext_{x}(F)
$$
for any $F\in \mathcal{MF}$ and $x,y\in \teich{g,m}$.

\subsection{Kerckhoff's formula and Minsky's inequality}
\label{Kerckhoff-Minsky}
The Teichm\"uller distance $d_T$ is expressed with the extremal length,
which we call \emph{Kerckhoff's formula}:
\begin{align}
\label{eq:Kerckhof_formula}
d_T(x_1,x_2)&=\frac{1}{2}\log \sup_{\alpha\in \mathcal{S}}
\frac{\ext_{x_2}(\alpha)}{\ext_{x_1}(\alpha)}
\\
&=
\frac{1}{2}\sup_{\alpha\in \mathcal{S}}
\left(
\log\ext_{x_2}(\alpha)-\log\ext_{x_1}(\alpha)
\right)
\nonumber
\end{align}
for $x_{1},x_{2}\in \teich{g,m}$
(see \cite{MR559474}).
Y. Minsky \cite{MR1231968} observed the following inequality,
which we recently call \emph{Minsky's inequality}:
\begin{equation}
\label{eq:Minsky_inequality}
i(F,G)^2\le \ext_x(F)\,\ext_x(G)
\end{equation}
for $x\in \teich{g,m}$ and $F,G\in \mathcal{MF}$.
If two measured foliation $F$ and $G$ are transverse
in the sense of \eqref{eq:transverse-mf},
we have
$$
\ext_{x}(F)+\ext_{x}(G)\ge\frac{i(F,H)^{2}+i(G,H)^{2}}{\ext_{x}(H)}
$$
for all $H$ and hence we deduce
$$
\ext_{x}(F)+\ext_{x}(G)\ge e^{2d_{T}(x_{0},x)}
\min\{i(F,H)^{2}+i(G,H)^{2}\mid \ext_{x_{0}}(H)=1\}.
$$
This implies that the function $x\mapsto \ext_{x}(F)+\ext_{x}(G)$
is proper on $\teich{g,m}$.
%


\subsection{Quadratic differentials}
Let $\quaddiff{g,m}$ be the space of holomorphic quadratic differentials
with finite $L^{1}$-norm.
Namely,
the space $\quaddiff{g,m}$ is a holomorphic vector bundle over $\teich{g,m}$
and the fiber $\quaddiff{x}$ at $x=(M,f)\in \teich{g,m}$ is
the space of holomorphic quadratic differentials on $M$
with finite $L^{1}$-norm
(cf. \S\ref{subsec:infinitesimal-theory}).
Any $q\in \quaddiff{x}$ extends meromorphically to
the completion $\overline{M}$ by filling punctures
and has (at most) simple poles at punctures.
A quadratic differential $q\in \quaddiff{g,m}$ is said to be \emph{generic}
if it satisfies the following two conditions:
\begin{enumerate}
\item
$q$ has a simple pole at every puncture.
\item
all zeroes of $q$ are simple.
\end{enumerate}
We can easily check that the set 
of all generic holomorphic quadratic differentials is an open and dense subset in 
$\quaddiff{g,m}$.

\subsection{Hubbard-Masur differentials}
For $q\in \quaddiff{g,m}$,
there is a unique measured foliation $F$ such that
$$
i(F,\alpha)=\inf_{\alpha'\sim f(\alpha)}\int_{\alpha'}|{\rm Re}\sqrt{q}|.
$$
for all $\alpha\in \mathcal{S}$,
where $(M,f)=\pi(q)\in \teich{g,m}$.
We call $F$ the \emph{vertical foliation} and denote it by
$\vertical(q)$.
We call $\horizontal(q)=\vertical(-q)$
the \emph{horizontal foliation} of $q$.
%
%
It is known that for any $F\in \mathcal{MF}$
and $x=(M,f)\in \teich{g,m}$,
there is a unique holomorphic quadratic differential
$q_{F,x}$ such that
$\vertical(q_{F,x})=F$.
We call $q_{F,x}$ the \emph{Hubbard-Masur differential}
for $F$ on $x$
(cf. \cite{MR523212}).
If $F$ is essentially complete,
the Hubbard-Masur differential $q_{F,x}$ is generic for all $x\in \teich{g,m}$,
since any Whitehead equivalent measured foliation to $F$
is isotopic to $F$.

The Hubbard-Masur differential $q_{F,x}=q_{F,x}(z)dz^2$
for $F$ on $x=(M,f)$ satisfies
\begin{equation}
\label{eq:JS_extremallength_norm_length}
\ext_x(F)=\|q_{F,x}\|=\int_M|q_{F,x}|=\int_M|q_{F,x}(z)|
\cdot\Leb{z}.
\end{equation}


\subsection{Infinitesimal theory}
\label{subsec:infinitesimal-theory}
Teichm\"uller space also has a canonical complex structure,
which induced from the deformation by quasiconformal mappings.
A \emph{Beltrami differential}
is,
by definition,
an $L^{\infty}$-measurable $(-1,1)$-form.
The holomorphic tangent space
is described as the quotient of the complex Banach space
of the Beltami differentials
(cf. \cite[Theorem 7.5]{MR1215481}).
The holomorphic cotangent space is identified with the space
of holomorphic quadratic differentials.
The natural pairing between holomorphic tangent space
and cotangent space is given by
$$
\langle v,q\rangle=\int_{M}\dot{\mu}q=
\int_{M}\dot{\mu}(z)q(z)\cdot\Leb{z}
$$
for $v=[\dot{\mu}]\in T_{x}\teich{g,m}$
and $q\in T^{*}_{x}\teich{g,m}\cong \quaddiff{x}$
and $x=(M,f)\in \teich{g,m}$.

\begin{convention}
Henceforth,
for $(p_{i},q_{i})$ forms $\varphi_{i}=\varphi_{i}(z)dz^{p_{i}}d\overline{z}^{q_{i}}$ $(i=1,2)$ on a Riemann surface $M$
with $p_{1}+p_{2}=q_{1}+q_{2}=1$,
we write
$$
\int_{M}\varphi_{1}\varphi_{2}=
\int_{M}\varphi_{1}(z)\varphi_{2}(z)\cdot\Leb{z}.
$$
\end{convention}

\section{Coordinates via representations of the odd cohomology}
%

\subsection{The tangent spaces to quadratic differentials}
\label{subsec:tangent-space-to-quadratic-differentials}
Following \cite{MR523212},
we describe the tangent space $T_{q_{0}}\quaddiff{g,m}$
as the first hypercohomolgy group $\mathbb{H}^{1}(L^{\bullet})$
a complex of sheaves
(cf. \cite{MR0102797} or \cite{MR507725}).
We will need the Kodaira-Spencer identification of the
tangent space of Teichm\"uller space
with the first cohomology group of the sheaf of holomorphic vector fields
(for instance, see \cite{MR815922}. See also \cite{MR1215481} and \cite{MR499279}).

Let $X$ and $q$
be a holomorphic vector field 
and a holomorphic quadratic differential
on an open set of a Riemann surface $M$.
Denote by $L_{X}q$ the Lie derivative of $q$ along $X$.
Let $\Theta_{M}$ and $\Omega_{M}^{\otimes 2}$ be the sheaves of germs of
holomorphic vector fields with zeroes at punctures
and meromorphic quadratic differentials on $M$ with (at most)
first order poles at punctures,
respectively.
Let $q_{0}\in \quaddiff{g,m}$
($q_{0}$ need not to be generic).
The tangent space $T_{q_{0}}\quaddiff{g,m}$
is identified with the 
first hypercohomology group of the complex of sheaves
$$
\begin{CD}
L^{\bullet}\colon\quad
0 @>>> \Theta_{M} @>{L_{\cdot}q_{0}}>> \Omega_{M}^{\otimes 2}@>>>0
\end{CD}
$$
(cf. \cite[Proposition 4.5]{MR523212}).
For the convenience,
we recall the definition of the (first)
hypercohomology group which we use here.
The first cochain group is the direct sum
$C^{0}(M,\Omega_{M}^{\otimes 2})\oplus C^{1}(M,\Theta_{M})$.
Consider an appropriate covering $\mathcal{U}=\{U_{i}\}_{i}$ on $M$
such that $\mathbb{H}^{1}(L^{\bullet})\cong \mathbb{H}^{1}(\mathcal{U},L^{\bullet})$
(see the proof of \cite[Proposition 4.5]{MR523212}).
A cochain $(\{\phi_{i}\}_{i},\{X_{ij}\}_{i,j})$
in $C^{0}(\mathcal{U},\Omega_{M}^{\otimes 2})\oplus
C^{1}(\mathcal{U},\Theta_{M})$
is said to be \emph{cocycle} if
it satisfies
$$
\delta\{X_{ij}\}_{i,j}=X_{ij}+X_{jk}+X_{ki}=0,\quad
\delta\{\phi_{i}\}_{i}=\phi_{i}-\phi_{j}=L_{X_{ij}}(q_{0}).
$$
A \emph{coboundary} is a cochain $(\{\phi_{i}\}_{i},\{X_{ij}\}_{i,j})$
of the form
$$
X_{ij}=Z_{i}-Z_{j}=\delta\{Z_{i}\}_{i},\quad
\phi_{i}=L_{Z_{i}}(q_{0})
$$
for some $0$-cochain $\{Z_{i}\}_{i}\in C^{0}(\mathcal{U},\Theta_{M})$
(cf. Figure \ref{fig:hypercohomology}).
\begin{figure}[t]
$$
\begin{CD}
0
\\
@AAA \\
C^{0}(\mathcal{U},\Omega_{M}^{\otimes 2})
@>{\delta}>>
C^{1}(\mathcal{U},\Omega_{M}^{\otimes 2}) \\
@A{L_{\cdot}q_{0}}AA
@A{-L_{\cdot}q_{0}}AA
\\
C^{0}(\mathcal{U},\Theta_{M})
@>{\delta}>>
C^{1}(\mathcal{U},\Theta_{M})
@>{\delta}>>
C^{2}(\mathcal{U},\Theta_{M})
\end{CD}
$$
%
\caption{Double complex for the tangent spaces to $\quaddiff{g,m}$}
\label{fig:hypercohomology}
\end{figure}

\subsection{Branched covering space associated with generic differentials}
\label{subsec:Branched-Covering-Space-associated-with-generic-differentials}
Let $x=(M,f)\in \teich{g,m}$ and $q_{0}$ a generic holomorphic
quadratic differential on $M$.
We consider $M$ as a pair of topological surface $\Sigma_{g}$
and a complex strtucture on $\Sigma_{g}-\{\mbox{punctures of $M$}\}$.

Let $U$ be a contractible neighborhood of $q_{0}$ in $\quaddiff{g,m}^{0}$.
For $q\in U$,
let $A_{q}={\rm Zero}(q)\cup {\rm Pole}(q)\subset \Sigma_{g}$
and $M_{q}$ the underlying surface of $q$.
Then,
we have a homomorphism
\begin{equation}
\label{eq:representation-q}
\rho_{q}\colon \pi_{1}(\Sigma_{g}-A_{q})\to \mathbb{Z}/2\mathbb{Z}
\subset {\rm Isom}(\mathbb{C})
\end{equation}
associated to the double covering space
$$
\pi_{q}\colon \tilde{M}_{q}\to M_{q}
$$
of Riemann surface of $\sqrt{q}$
over $\Sigma_{g}-A_{q}$,
where $\mathbb{Z}/2\mathbb{Z}$ in \eqref{eq:representation-q}
is recognized as
a subgroup of the isometry group ${\rm Isom}(\mathbb{C})$
of $\mathbb{C}$ with the standard Euclidean metric
generated by the $\pi$-rotation.
The square root $\sqrt{q}$ lifts to a well-defined holomorphic $1$-
form $\omega_{q}$ on $\tilde{M}_{q}$.

Let $W=\Sigma_{g}-A_{q_{0}}$
and $W'$ be a compact subsurface of $W$
with smooth boundary
such that
the inclusion $W'\hookrightarrow W_{0}$
is homotopy equivalent and
the complement $\Sigma_{g}-W'$
is a union of tiny open $|q_{0}|$-disks of centers in $A_{0}$
By taking sufficiently small $U$ if necessary,
we may assume that $A_{q}\subset  \Sigma_{g}-W'$
for any $q\in U$.
Hence,
by passing the inclusions $W'\hookrightarrow W$
and $W'\hookrightarrow \Sigma_{g}-A_{q}$,
we get a canonical identification between
$\pi_{1}(W)$ and $\pi_{1}(\Sigma_{g}-A_{q})$
so that $\rho_{q}=\rho_{0}$ as a homomorphism from
$\pi_{1}(W)$ to $\mathbb{Z}/2\mathbb{Z}$
for all $q\in U$.

\subsection{Backgrounds}
\label{subsec:background-tangent-space}
Let $q_{0}\in \quaddiff{g,m}$ be a generic differential on $M_{q_{0}}$.
Take a neighborhood $U$ of $q_{0}$ as the previous section.
Then, $\mathbb{M}=\cup_{q\in U}\tilde{M}_{q}$ is a holomorphic family of closed Riemann surfaces over $U$,
and the arrangement of the covering transformation $r_{q}\colon
\tilde{M}_{q}\to \tilde{M}_{q}$ defines the holomorphic automorphism
${\bf r}$ of  $\mathbb{M}$ of order $2$.
Taking a sufficiently small $U$ if necessary,
we may assume that there is a covering $\{\mathcal{V}_{i}\}_{i}$
of $\mathbb{M}$ such that
\begin{enumerate}
\item
each $\mathcal{V}_{i}$ admits a biholomorphic mapping $\mathcal{A}_{i}$
onto the product domain $\mathbb{D}\times U$
such that its restriction to $\mathcal{V}_{i,q}=\mathcal{V}_{i}\cap \tilde{M}_{q}$ is a biholomorphism onto $\mathbb{D}\times \{q\}$
for each $q\in U$,
\item
for any finite $i_{1},\cdots,i_{k}$ and $q\in U$,
the intersection $\mathcal{V}_{i_{1},q}\cap \cdots\cap \mathcal{V}_{i_{k},q}$
is connected and simply connected,
\item
eacn $V_{i,q}$ contains at most one branch point of $\tilde{M}_{q}\to M_{q}$
and any branch point of $\tilde{M}_{q}\to M_{q}$ is contained in at most
one open set in $\{V_{i,q}\}_{i}$,
and
\item
the covering $\{\mathcal{V}_{i}\}_{i}$ is equivariant under ${\bf r}$
in the sense that 
$\{{\bf r}(\mathcal{V}_{i})\}_{i}=\{\mathcal{V}_{i}\}_{i}$
and $\mathcal{A}_{j}\circ {\bf r}=\mathcal{A}_{i}$
when ${\bf r}(\mathcal{V}_{i})=\mathcal{V}_{j}$.
\end{enumerate}
Set $D=\{\lambda=s+it\mid |s|,|t|<\delta_{0}\}$.
Let $\{q_{\lambda}\}_{\lambda\in D}$
be a differentiable family of holomorphic quadratic differenitals
with $q_{0}=q_{0}$ and $q_{\lambda}\in U$ for $\lambda\in D$.
For any $\lambda\in D$,
we let $\alpha_{i}(\lambda)\colon \mathcal{V}_{i,q_{\lambda}}\to \mathbb{D}$
by $\alpha_{i}(\lambda)=\mathcal{A}_{i}\mid_{\tilde{M}_{q_{\lambda}}}$
and set
$V_{\lambda;ij}=\alpha_{j}(\lambda)(\mathcal{V}_{i,q_{\lambda}}\cap \mathcal{V}_{j,q_{\lambda}})$,
and
$\alpha_{ij}(\lambda)=\alpha_{j}(\lambda)\circ \alpha_{i}(\lambda)^{-1}\colon V_{\lambda;ji}\to V_{\lambda;ij}$.
For simplicity,
set $\alpha_{i}=\alpha_{0;i}$,
$\alpha_{ij}=\alpha_{0;ij}$
and $V_{ij}=V_{0;ij}$.
Consider the image of $\alpha_{j}$ is in the $z_{j}$-plane.

Let $\tilde{q}_{\lambda;i}dz_{i}^{2}$ be the representation of $(\pi_{q_{\lambda}})^{*}(q_{\lambda})$
under the coordinate $\alpha_{i}(\lambda)$.
Let
\begin{align}
&
\tilde{\phi}_{\lambda;i}=\frac{\partial \tilde{q}_{\lambda;i}}{\partial \lambda},
\quad
\tilde{X}_{\lambda;i}=-\frac{\partial \alpha_{i}(\lambda)}{\partial \lambda}\circ
\alpha_{i}(\lambda)^{-1},
\label{eq:vector-fields}
\end{align}
on $\mathbb{D}$
for all $i$ and set
$$
\tilde{X}_{\lambda;ij}(z_{j})=\frac{\partial \alpha_{ij}(\lambda)}{\partial \lambda}(z_{i})
,
$$
where $z_{j}=\alpha_{ij}(\lambda)(z_{i})$ and $z_{i}\in V_{\lambda;ij}$
(tildes mean objects (differentials or vector fields etc.)
on the covering space $\tilde{M}_{q_{0}}$
which obtained as lifts of objects on $M_{q_{0}}$).
Then,
$$
\tilde{X}_{\lambda;i}(z_{i})\frac{\partial}{\partial z_{i}},\quad
\tilde{X}_{\lambda;ij}(z_{j})\frac{\partial}{\partial z_{j}}
$$
are vector fields on appropriate sets of $\tilde{M}_{q_{\lambda}}$.
Set $\tilde{q}_{i}=\tilde{q}_{0;i}$
and
$\tilde{\phi}_{i}=\tilde{\phi}_{0;i}$.
Notice that
\begin{equation}
\label{eq:subsec:background-tangent-space1}
\alpha_{ij}(\lambda)_{*}(\tilde{X}_{\lambda;i})-\tilde{X}_{\lambda;j}=\tilde{X}_{\lambda;ij},
\end{equation}
\begin{align}
\alpha_{ij}(\lambda)^{*}(\tilde{\phi}_{\lambda;i})
-\tilde{\phi}_{\lambda;j}
&=L_{\tilde{X}_{\lambda;ij}}(\tilde{q}_{\lambda;j})
\label{eq:subsec:background-tangent-space2}
\end{align}
on $V_{\lambda;ij}$.
We abbreviate these equations to
\begin{align}
&\tilde{X}_{\lambda;i}-\tilde{X}_{\lambda;j}=\tilde{X}_{\lambda;ij}
,\quad
\tilde{\phi}_{\lambda;i}
-\tilde{\phi}_{\lambda;j}
=L_{\tilde{X}_{\lambda;ij}}(\tilde{q}_{\lambda}).
\label{eq:subsec:background-tangent-space4}
\end{align}
for instance.
Notice again that $\omega_{q_{\lambda}}^{2}=(\pi_{q_{\lambda}})^{*}(q_{\lambda})$
for all $\lambda\in D$.

Notice that for any cocycle $\{X_{ij}\}_{ij}\in C^{1}(M,\Theta_{M})$,
we can always find a $0$-cochain
$\{X_{i}\}_{i}$ of the sheaf
of $C^{\infty}$-vector fields which satisfies
$X_{i}-X_{j}=X_{ij}$ on $U_{i}\cap U_{j}$
applying a partition of unity.
Then,
from \eqref{eq:vector-fields},
the $\overline{z}$-derivative
\begin{equation}
\label{eq:infinitesimal1}
\dot{\nu}=-(\tilde{X}_{i})_{\overline{z}}
\end{equation}
defines a Beltrami differential on $M$
which
represents the infinitesimal deformation
corresponding
to the tangent vector
associated to
the cohomology class of $\{X_{ij}\}_{i,j}$ in $H^{1}(M,\Theta_{M})$.
Compare with Equation $(7.27)$ in \cite[\S7.2.4]{MR1215481}.

\subsection{Local coordinates via homomorphisms}
Let $H_{1}(\tilde{M}_{q})^{-}$ is the odd part of the homology
of $\tilde{M}_{q}$ with coefficient in $\mathbb{Z}$.
Namely,
$H_{1}(\tilde{M}_{q})^{-}$ is the the eigen space of the eigen value $-1$
of the linear automorphism $(r_{q})_{*}$ on the first cohomology group 
of $\tilde{M}_{q}$ defined by the covering transformation $r_{q}$.
In \cite{MR0396936},
Douady and Hubbard consider the following mapping
\begin{equation}
\label{eq:Douady-Hubbard}
\douadyhubbard\colon
U\ni \lambda \mapsto \chi_{q}\in {\rm Hom}(H_{1}(\tilde{M}_{q_{0}})^{-},\mathbb{C})
\end{equation}
defined by
$$
\chi_{q}(c)=\int_{c}\omega_{q}
$$
where we identify $H_{1}(\tilde{M}_{q_{0}})^{-}\cong H_{1}(\tilde{M}_{q})^{-}$
in a canonical way as discussed in \S\ref{subsec:Branched-Covering-Space-associated-with-generic-differentials}.
Notice that
${\rm Hom}(H_{1}(\tilde{M}_{q_{0}})^{-},\mathbb{C})$ is a $\mathbb{C}$-vector space
of complex dimension $6g-6+2m$.
We shall check the following.

\begin{lemma}[Differential of $\douadyhubbard$]
\label{lem:Douady-Hubbard}
Let $q_{0}\in \quaddiff{g,m}$ be a generic differential.
Let $v\in T_{q_{0}}\quaddiff{g,m}$ be a tangent vector
associated to a $1$-cochain $(\{\phi_{i}\}_{i},\{X_{ij}\}_{i,j})$.
Let $\{X_{i}\}_{i}$ be a $0$-cochain of the sheaf
of $C^{\infty}$-vector fields on $X_{q_{0}}$.
Then,
the derivative of the mapping \eqref{eq:Douady-Hubbard} along $v$
satisfies
\begin{equation}
\label{eq:lem:Douady-Hubbard1}
\douadyhubbard_{*}[v](c)
=\int_{c}
\Phi
\end{equation}
for all $c\in H_{1}(\tilde{M}_{q_{0}})^{-}$,
where
$$
\Phi=\left(
\frac{\tilde{\phi}_{i}}{2\omega_{q_{0}}}-\omega_{q_{0}}'\tilde{X}_{i}-\omega_{q_{0}}(\tilde{X}_{i})_{z}
\right)dz
-\omega_{q_{0}}
(\tilde{X}_{i})_{\overline{z}}d\overline{z}
$$
on $\mathcal{V}_{i,0}$.
\end{lemma}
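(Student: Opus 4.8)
The plan is to compute $\douadyhubbard_{*}[v](c)=\frac{d}{d\lambda}\big|_{0}\chi_{q_{\lambda}}(c)$ along a differentiable family $\{q_{\lambda}\}_{\lambda\in D}$ inside $U$ realizing $v$, where $\chi_{q_{\lambda}}(c)=\int_{c}\omega_{q_{\lambda}}$ and a class $c\in H_{1}(\tilde{M}_{q_{0}})^{-}$ is transported canonically to $\tilde{M}_{q_{\lambda}}$ as in \S\ref{subsec:Branched-Covering-Space-associated-with-generic-differentials}. We may take the objects of \S\ref{subsec:background-tangent-space} associated to the family to be the given $\tilde{\phi}_{i},\tilde{X}_{ij}$; in any event the formula is unaffected by replacing the cochain by a cohomologous one and $\{X_{i}\}$ by another $C^{\infty}$ lift of $\{X_{ij}\}$, since this changes $\Phi$ only by a globally exact form (see below). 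Fix a piecewise smooth representative $\gamma=\gamma_{1}+\dots+\gamma_{N}$ of $c$, a finite union of loops, with each $\gamma_{k}\subset\mathcal{V}_{i(k),0}$ running from $p_{k-1}$ to $p_{k}$, $p_{k}\in\mathcal{V}_{i(k),0}\cap\mathcal{V}_{i(k+1),0}$ (indices cyclic along each loop), and with $\gamma$ avoiding the finitely many branch points of $\tilde{M}_{q_{0}}$, so that $\omega_{q_{0}}$ and a continuous branch of $\sqrt{\tilde{q}_{\lambda;i}}$ are available near each arc for small $\lambda$. For such $\lambda$ transport each arc through its chart, $\gamma_{k}^{\lambda}=\alpha_{i(k)}(\lambda)^{-1}(\alpha_{i(k)}(0)(\gamma_{k}))$, and join the endpoint of $\gamma_{k}^{\lambda}$ to the starting point of $\gamma_{k+1}^{\lambda}$ --- two points of $\tilde{M}_{q_{\lambda}}$ both tending to $p_{k}$ as $\lambda\to0$ --- by the segment $\sigma_{k}^{\lambda}$ straight in the coordinate $\alpha_{i(k+1)}(\lambda)$. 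Then $\gamma^{\lambda}=\sum_{k}(\gamma_{k}^{\lambda}+\sigma_{k}^{\lambda})$ is a cycle equal to $\gamma$ at $\lambda=0$, hence homologous to the transported class of $c$ in $H_{1}(\tilde{M}_{q_{\lambda}})^{-}$ for small $\lambda$ by continuity and the ${\bf r}$-equivariance of $\{\mathcal{V}_{i}\}_{i}$; so $\douadyhubbard_{*}[v](c)=\frac{d}{d\lambda}\big|_{0}\int_{\gamma^{\lambda}}\omega_{q_{\lambda}}$.

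The next step is to compute the two types of contribution. In the coordinate $\alpha_{i(k)}(\lambda)$ one has $\omega_{q_{\lambda}}=\sqrt{\tilde{q}_{\lambda;i(k)}}\,dz$ and $\gamma_{k}^{\lambda}$ becomes the $\lambda$-independent curve $\alpha_{i(k)}(0)(\gamma_{k})$, so differentiating under the integral over this compact curve gives $\frac{d}{d\lambda}\big|_{0}\int_{\gamma_{k}^{\lambda}}\omega_{q_{\lambda}}=\int_{\gamma_{k}}\frac{\tilde{\phi}_{i(k)}}{2\omega_{q_{0}}}\,dz$. For $\sigma_{k}^{\lambda}$, written in the coordinate $\alpha_{i(k+1)}(\lambda)$, the terminal point $\alpha_{i(k+1)}(0)(p_{k})$ is independent of $\lambda$ while the initial point is $\alpha_{i(k)i(k+1)}(\lambda)(\alpha_{i(k)}(0)(p_{k}))$, whose $\lambda$-derivative at $0$ is $\tilde{X}_{i(k)i(k+1)}$ at $p_{k}$ by the definition of $\tilde{X}_{\lambda;ij}$; since the segment collapses at $\lambda=0$, the product rule gives $\frac{d}{d\lambda}\big|_{0}\int_{\sigma_{k}^{\lambda}}\omega_{q_{\lambda}}=-(\iota_{\tilde{X}_{i(k)i(k+1)}}\omega_{q_{0}})(p_{k})$. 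Summing over $k$, $\douadyhubbard_{*}[v](c)=\sum_{k}\int_{\gamma_{k}}\frac{\tilde{\phi}_{i(k)}}{2\omega_{q_{0}}}\,dz-\sum_{k}(\iota_{\tilde{X}_{i(k)i(k+1)}}\omega_{q_{0}})(p_{k})$.

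To finish, one turns the second sum into a de Rham integral. As $\omega_{q_{0}}$ is holomorphic, hence closed, $L_{\tilde{X}_{i}}\omega_{q_{0}}=d(\iota_{\tilde{X}_{i}}\omega_{q_{0}})$ on $\mathcal{V}_{i,0}$; the fundamental theorem of calculus along each $\gamma_{k}$, followed by reindexing within each loop and the relation $\tilde{X}_{i}-\tilde{X}_{j}=\tilde{X}_{ij}$ of \eqref{eq:subsec:background-tangent-space4}, gives $\sum_{k}\int_{\gamma_{k}}L_{\tilde{X}_{i(k)}}\omega_{q_{0}}=\sum_{k}(\iota_{\tilde{X}_{i(k)}-\tilde{X}_{i(k+1)}}\omega_{q_{0}})(p_{k})=\sum_{k}(\iota_{\tilde{X}_{i(k)i(k+1)}}\omega_{q_{0}})(p_{k})$. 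Hence $\douadyhubbard_{*}[v](c)=\sum_{k}\int_{\gamma_{k}}\bigl(\tfrac{\tilde{\phi}_{i(k)}}{2\omega_{q_{0}}}\,dz-L_{\tilde{X}_{i(k)}}\omega_{q_{0}}\bigr)=\int_{c}\Phi$, where $\Phi$ is the $1$-form equal to $\tfrac{\tilde{\phi}_{i}}{2\omega_{q_{0}}}\,dz-L_{\tilde{X}_{i}}\omega_{q_{0}}$ on $\mathcal{V}_{i,0}$; expanding $L_{\tilde{X}_{i}}\omega_{q_{0}}=(\omega_{q_{0}}'\tilde{X}_{i}+\omega_{q_{0}}(\tilde{X}_{i})_{z})\,dz+\omega_{q_{0}}(\tilde{X}_{i})_{\overline{z}}\,d\overline{z}$ in the coordinate recovers the $\Phi$ of the statement. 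That $\Phi$ is globally well defined, and changes only by an exact form under the modifications above, follows from \eqref{eq:subsec:background-tangent-space4}: on $\mathcal{V}_{i,0}\cap\mathcal{V}_{j,0}$ the discrepancy of the two local expressions is $\tfrac{L_{\tilde{X}_{ij}}((\pi_{q_{0}})^{*}q_{0})}{2\omega_{q_{0}}}\,dz-L_{\tilde{X}_{ij}}\omega_{q_{0}}$, which vanishes because $\tilde{X}_{ij}$ is holomorphic and $\omega_{q_{0}}^{2}=(\pi_{q_{0}})^{*}q_{0}$.

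The main point of difficulty is the bookkeeping around the correction segments $\sigma_{k}^{\lambda}$: one must check that $\gamma^{\lambda}$ is a genuine cycle representing the transported class of $c$ in the odd homology, and that the $\sigma_{k}^{\lambda}$ contribute at first order with exactly the stated sign --- for it is this defect term that, telescoped against the exact forms $d(\iota_{\tilde{X}_{i}}\omega_{q_{0}})$, upgrades the chart dependent integrand $\tfrac{\tilde{\phi}_{i}}{2\omega_{q_{0}}}\,dz$ to the intrinsic $1$-form $\Phi$; omitting it would leave only $\sum_{k}\int_{\gamma_{k}}\tfrac{\tilde{\phi}_{i(k)}}{2\omega_{q_{0}}}\,dz$, which depends on the chosen subdivision. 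A secondary issue is the behaviour at the branch points of $\tilde{M}_{q_{0}}$, i.e.\ the zeros of $\omega_{q_{0}}$, where the local expression for $\Phi$ has apparent singularities: the normal form of a generic quadratic differential and its square root cover forces $\tilde{\phi}_{i}$ to vanish there to high enough order that $\Phi$ in fact extends smoothly, so that $\int_{c}\Phi$ does not depend on $\gamma$ having been chosen to miss these points.
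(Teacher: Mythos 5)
Your proposal is correct and is essentially the paper's own argument: the paper likewise takes a differentiable family realizing $v$, uses the chart expansions with $\partial_\lambda\omega_{q_\lambda}|_{0}=\tilde{\phi}_{i}/(2\omega_{q_{0}})$, and invokes the variable-endpoint Leibniz rule ``as in Proposition 1 of Douady--Hubbard'' to produce exactly the endpoint corrections you organize via the collapsing connector segments and then telescope into $\Phi$. You have simply written out in full the bookkeeping (chart-wise decomposition of the cycle, the $\tilde{X}_{ij}$ endpoint terms, and the cocycle relation $\tilde{X}_{i}-\tilde{X}_{j}=\tilde{X}_{ij}$ giving the global closed form) that the paper leaves as the standard argument, together with the well-definedness of $\Phi$ which the paper also asserts without proof.
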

One can easily check that $\Phi$ in Lemma \ref{lem:Douady-Hubbard}
is a well-defined closed one-form on $\tilde{M}_{q_{0}}$
which satisfies $(r_{q_{0}})^{*}(\Phi)=-\Phi$.

\begin{proof}
We calculate the $\lambda$-derivative at $\lambda=0$
with the notation in \S\ref{subsec:background-tangent-space}.
Notice that
\begin{align*}
\alpha_{i}(\lambda)\circ \alpha_{i}^{-1}(z_{i})
&=z_{i}-\lambda \tilde{X}_{i}(z_{i})-\overline{\lambda}\tilde{Y}_{i}(z_{i})+o(|\lambda|) \\
\tilde{q}_{\lambda;i}(z_{i})
&=\tilde{q}_{i}(z_{i})+\lambda \tilde{\phi}_{i}(z_{i})+
\overline{\lambda}\tilde{\psi}_{i}(z_{i})+o(|\lambda|)
\end{align*}
on $\mathbb{D}=\alpha_{i}(\mathcal{V}_{i,0})$
as $\lambda\to 0$,
where
$$
\tilde{Y}_{i}=-\left.\frac{\partial \alpha_{i}(\lambda)}{\partial \overline{\lambda}}\right|_{\lambda=0}\circ \alpha_{i}^{-1},\quad
\tilde{\psi}_{i}=\left.
\frac{\partial q_{\lambda;i}}{\partial \overline{\lambda}}
\right|_{\lambda=0},
$$
and the convergence is valid uniformly on any compact sets
of $\mathbb{D}$.
In our case,
$\lambda$-derivative of $D\ni\lambda\mapsto \douadyhubbard(q_{\lambda})(c)$
at $\lambda=0$
will be the desired formula.

Notice that the partial $\lambda$-
derivative of $\omega_{\lambda}$ at $\lambda=0$
is
$$
\left.
\frac{\partial \omega_{\lambda}}{\partial \lambda}
\right|_{\lambda=0}
=\frac{\tilde{\phi}_{i}}{2\omega_{q_{0}}}
$$
on $\mathbb{D}=\alpha_{i}(\mathcal{V}_{i,0})$.
Notice also in general that the derivative of
a function defined by the integration
$$
(-\delta,\delta)\ni s\mapsto \int_{a(s)}^{b(s)}f(s,x)dx
$$
at $s=0$
is equal to
$$
\int_{a(0)}^{b(0)}\frac{\partial f}{\partial s}(0,x)dx
+f(0,b(0))\frac{db}{ds}(0)-f(0,a(0))\frac{da}{ds}(0).
$$
Hence,
by the standard argument,
like as the discussion in Proposition 1 of \cite{MR0396936},
one can check the equations \eqref{eq:lem:Douady-Hubbard1}
hold.
\end{proof}

Though the following is well-known
(cf. \cite{MR866707},
\cite{MR1094714},
\cite{MR1135877}),
we confirm for the following for the completeness.
Indeed,
the calculation in the following proof will be a guide
in the later discussion.

\begin{lemma}[$\douadyhubbard$ defines a local coordinate]
\label{lem:douady-hubbard-local-coordinate}
Let $q_{0}\in \quaddiff{g,m}$ be as above.
When we take the above neighborhood $U$ of $q_{0}$
to be sufficiently small,
the mapping \eqref{eq:Douady-Hubbard} defines a holomorphic local coordinate around $q_{0}$.
\end{lemma}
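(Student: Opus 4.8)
The plan is to apply the holomorphic inverse function theorem. Both $\quaddiff{g,m}$ and ${\rm Hom}(H_{1}(\tilde{M}_{q_{0}})^{-},\mathbb{C})$ are complex manifolds of complex dimension $6g-6+2m$ --- for the source this is the dimension of the total space of $T^{*}\teich{g,m}$, and for the target it was recorded just after \eqref{eq:Douady-Hubbard}. Hence, after shrinking $U$, it is enough to show that $\douadyhubbard$ is holomorphic and that its differential at $q_{0}$ is injective. Holomorphicity is standard: under the identification $H_{1}(\tilde{M}_{q_{0}})^{-}\cong H_{1}(\tilde{M}_{q})^{-}$ of \S\ref{subsec:Branched-Covering-Space-associated-with-generic-differentials} a fixed class $c$ is carried by a locally constant family of cycles, while $\omega_{q}$ depends holomorphically on $q\in U$, so $q\mapsto \int_{c}\omega_{q}$ is holomorphic; one can also read this off from the formula for $\Phi$ in Lemma \ref{lem:Douady-Hubbard}.

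For the injectivity of $\douadyhubbard_{*}$ at $q_{0}$, suppose $v\in T_{q_{0}}\quaddiff{g,m}$ satisfies $\douadyhubbard_{*}[v](c)=\int_{c}\Phi=0$ for all $c\in H_{1}(\tilde{M}_{q_{0}})^{-}$, where $\Phi$ is the closed one-form of Lemma \ref{lem:Douady-Hubbard}. Since $(r_{q_{0}})^{*}\Phi=-\Phi$, the de Rham class $[\Phi]$ already annihilates the $(+1)$-eigenspace of $(r_{q_{0}})_{*}$ on $H_{1}(\tilde{M}_{q_{0}};\mathbb{C})$; by hypothesis it also annihilates the $(-1)$-eigenspace, hence all of $H_{1}(\tilde{M}_{q_{0}};\mathbb{C})$, so $\Phi=df$ for some smooth $f\colon \tilde{M}_{q_{0}}\to\mathbb{C}$, which we may take to satisfy $(r_{q_{0}})^{*}f=-f$ after replacing $f$ by $(f-(r_{q_{0}})^{*}f)/2$. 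Comparing the $d\overline{z}$-part of $\Phi=df$ with the formula for $\Phi$ on each $\mathcal{V}_{i,0}$ shows that $\tilde{Z}_{i}:=\tilde{X}_{i}+f/\omega_{q_{0}}$ is holomorphic away from the branch points of $\pi_{q_{0}}$, and comparing the $dz$-parts gives $\tilde{\phi}_{i}=L_{\tilde{Z}_{i}}(\omega_{q_{0}}^{2})$. Once one knows that $\tilde{Z}_{i}$ is $(r_{q_{0}})$-invariant and extends holomorphically across the branch points so as to descend to a section $Z_{i}$ of $\Theta_{M}$, one reads off $X_{ij}=Z_{i}-Z_{j}$ and $\phi_{i}=L_{Z_{i}}(q_{0})$, i.e. the cochain $(\{\phi_{i}\}_{i},\{X_{ij}\}_{i,j})$ is a coboundary and $v=0$ in $\mathbb{H}^{1}(L^{\bullet})=T_{q_{0}}\quaddiff{g,m}$. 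Since $\douadyhubbard_{*}$ is then an injective $\mathbb{C}$-linear map between spaces of equal dimension, it is an isomorphism, and the holomorphic inverse function theorem finishes the proof.

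The delicate step is the regularity of $\tilde{Z}_{i}=\tilde{X}_{i}+f/\omega_{q_{0}}$ at the branch points of $\pi_{q_{0}}$: a priori it has a pole there, because $\omega_{q_{0}}$ vanishes at the points lying over the zeros of $q_{0}$, and one must verify that the resulting vector field on $M_{q_{0}}$ is genuinely holomorphic --- including at the zeros of $q_{0}$ --- and vanishes at the punctures, so that it is a section of $\Theta_{M}$. This uses essentially the oddness $(r_{q_{0}})^{*}f=-f$, which forces $f$ to vanish at the fixed points of $r_{q_{0}}$, together with the exact orders of vanishing of $\omega_{q_{0}}$ dictated by the genericity of $q_{0}$; the computation is in the spirit of, and locally reduces to, the one in Proposition 1 of \cite{MR0396936}, and it is this computation that will guide the Levi-form calculations later in the paper.
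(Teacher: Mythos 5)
Your argument is correct, but at the crucial injectivity step it takes a genuinely different route from the paper. Both proofs begin identically: equal dimensions reduce everything to injectivity of $\douadyhubbard_{*}$, and the vanishing of the odd periods together with $(r_{q_{0}})^{*}\Phi=-\Phi$ forces $\Phi=df$. From there you construct the trivializing $0$-cochain explicitly as $\tilde{Z}_{i}=\tilde{X}_{i}+f/\omega_{q_{0}}$ and verify both coboundary identities $\tilde{X}_{ij}=\tilde{Z}_{i}-\tilde{Z}_{j}$ and $\tilde{\phi}_{i}=L_{\tilde{Z}_{i}}(\omega_{q_{0}}^{2})$ by a single local computation (which I checked: the $d\overline{z}$-part gives $(\tilde{Z}_{i})_{\overline{z}}=0$ and the $dz$-part gives exactly $\omega_{q_{0}}'\tilde{Z}_{i}+\omega_{q_{0}}(\tilde{Z}_{i})_{z}=\tilde{\phi}_{i}/2\omega_{q_{0}}$). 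The paper instead splits the problem in two: it first integrates $f_{\overline{z}}\,(\pi_{q_{0}})^{*}(\varphi)/\omega_{q_{0}}$ by Stokes to show that the Beltrami differential $\dot{\mu}=-(\tilde{X}_{i})_{\overline{z}}$ pairs trivially with every holomorphic quadratic differential, invokes Teichm\"uller's lemma and Kodaira--Spencer theory to get the holomorphic cochain $\{Z_{i}\}$ abstractly, and only then kills the residual holomorphic quadratic differential $\{\phi_{i}-L_{Z_{i}}(q_{0})\}$ by a second period computation. What your route buys is directness and a formula for $\{Z_{i}\}$; what it costs is precisely the regularity analysis at the branch points that you flag but do not carry out. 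That deferred step does go through, and it is worth recording why: over a simple zero of $q_{0}$ one has $\omega_{q_{0}}\sim 2w^{2}dw$, and the odd smooth function $f$ vanishes only to first order, so $f/\omega_{q_{0}}$ is a priori only $O(1/|w|)$; but since $\tilde{Z}_{i}$ is holomorphic off $w=0$ and $O(1/|w|)$, it has at worst a simple pole there, and an $r_{q_{0}}$-equivariant vector field with odd coefficient and a simple pole pushes forward under $z=w^{2}$ to $2w\tilde{Z}_{i}(w)\partial/\partial z$, which is holomorphic and even in $w$, hence a genuine holomorphic vector field downstairs (and the same bookkeeping at the branch points over the punctures gives the required vanishing there). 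The paper's detour through Teichm\"uller's lemma avoids this local analysis entirely, at the price of being less explicit; your version, once the branch-point check is written out, is a complete and arguably more self-contained proof, and your explicit treatment of the holomorphicity of $\douadyhubbard$ itself is a point the paper leaves implicit.
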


\begin{proof}
%
Let $v\in T_{q_{0}}\quaddiff{g,m}$ be a tangent vector
which is represented 
by the $1$ cocycle
$(\{\phi_{i}\}_{i},\{X_{ij}\}_{i,j})$
in 
$C^{0}(\mathcal{V}_{0},\Omega_{M_{q_{0}}}^{\otimes 2})\oplus
C^{1}(\mathcal{V}_{0},\Theta_{M_{q_{0}}})$,
where $\mathcal{V}_{0}=\{\{\pi_{q_{0}}(\mathcal{V}_{i,0})\}_{i}\}_{i}$
is a covering of $M_{q_{0}}$
(cf. \eqref{eq:subsec:background-tangent-space2}).

Suppose that
$\douadyhubbard_{*}[v]=0$.
Let $\Phi$ be a closed one form defined in Lemma \ref{lem:Douady-Hubbard}.
Since $(r_{0})^{*}(\Phi)=-\Phi$,
$$
\displaystyle \int_{c}\Phi=
\displaystyle \int_{r_{0}(c)}\Phi
=-\displaystyle \int_{c}\Phi
$$
for all $c\in H_{1}(\tilde{M}_{q_{0}})^{+}$,
where $H_{1}(\tilde{M}_{q_{0}})^{+}$ is the even part of the homology
of $\tilde{M}_{q}$ with coefficient in $\mathbb{Z}$.
Hence,
there is a smooth function $f$ on $\tilde{M}_{q_{0}}$ such that
$df=\Phi$
from de Rham's theorem.
Namely,
\begin{align*}
\frac{\tilde{\phi}_{i}}{2\omega_{q_{0}}}-\omega_{q_{0}}'\tilde{X}_{i}-
\omega_{q_{0}}\cdot (\tilde{X}_{i})_{z}
&=f_{z} \\
-\omega_{q_{0}}\cdot (\tilde{X}_{i})_{\overline{z}}
&=
f_{\overline{z}}
\end{align*}
hold on $\mathbb{D}=\alpha_{i}(\mathcal{V}_{i,0})$,
where $\tilde{\phi}_{i}$ and
$\tilde{X}_{i}$ are lifts of $\phi_{i}$ and $\tilde{X}_{i}$ to $\tilde{M}_{q_{0}}$,
respectively.
Since $\{(\mathcal{V}_{i,\lambda},\alpha_{i}(\lambda))\}_{i}$ is a
system of holomorphic local coordinates on $\tilde{M}_{q_{\lambda}}$
for each $\lambda\in D$,
$\dot{\nu}=-(\tilde{X}_{i})_{\overline{z}}$
defines a Beltrami differential
on $\tilde{M}_{q_{0}}$ which
descends to a Beltrami differential $\dot{\mu}$
on $M_{q_{0}}$.
Notice that
for any holomorphic quadratic differential $\varphi$ on $M_{q_{0}}$,
the quotient
$(\pi_{q_{0}})^{*}(\varphi)/\omega_{q_{0}}$ is a holomorphic $1$-form on $\tilde{M}_{q_{0}}$
and
\begin{align*}
\int_{M_{q_{0}}}\dot{\mu}\varphi
&=
\frac{1}{2}\int_{\tilde{M}_{q_{0}}}\dot{\nu}\cdot (\pi_{q_{0}})^{*}(\varphi)
=
\frac{1}{2}\int_{\tilde{M}_{q_{0}}}f_{\overline{z}}\frac{(\pi_{q_{0}})^{*}(\varphi)}{\omega_{q_{0}}}
\\
&=
\frac{1}{2}\int_{\tilde{M}_{q_{0}}}
\overline{\partial}
\left(f(z)\frac{(\pi_{q_{0}})^{*}(\varphi)(z)}{\omega_{q_{0}}(z)}dz
\right) \\
&=
\frac{1}{2}\int_{\tilde{M}_{q_{0}}}
d
\left(f(z)\frac{(\pi_{q_{0}})^{*}(\varphi)(z)}{\omega_{q_{0}}(z)}dz
\right)
=0.
\end{align*}
Hence,
$\dot{\mu}$ is an infinitesimally trivial Beltrami differenital
on $M_{q_{0}}$
by Teichm\"uller lemma (cf. Lemma 7.6 of \cite{MR1215481}).
This means that
the cohomology class
$$
[\{X_{ij}\}_{i,j}]\in 
H^{1}(\mathcal{V}_{0},\Theta_{M_{q_{0}}})
\cong H^{1}(M_{q_{0}},\Theta_{M_{q_{0}}})
$$
is trivial from \eqref{eq:subsec:background-tangent-space4}
since $\mathcal{V}_{0}$ is a Leray covering
(see also Theorem 3.5 of \cite{MR815922}).
Therefore,
there is a $0$-cochain $\{Z_{i}\}_{i}
\in C^{0}(\mathcal{V}_{0,i},\Theta_{M_{q_{0}}})$
such that $Z_{i}-Z_{j}=X_{ij}$
for all $i,j$.

Let $\zeta_{i}=L_{Z_{i}}(q_{0})
\in \Gamma(\mathcal{V}_{0,i},\Omega^{\otimes 2}_{M_{q_{0}}})$
and $\tilde{\zeta}_{i}=L_{\tilde{Z}_{i}}(\omega_{q_{0}}^{2})$
be the lift of $\zeta_{i}$.
Then,
$$
\tilde{\zeta}_{i}-\tilde{\zeta}_{j}=L_{\tilde{X}_{ij}}(\omega_{q_{0}}^{2})
$$
and hence $\{\tilde{\phi}_{i}-\tilde{\zeta}_{i}\}_{i}$ defines a holomorphic
quadratic differential $\tilde{\varphi}$ on $\tilde{M}_{q_{0}}$
which descends to a holomorphic quadratic differential
$\varphi$ on $M_{q_{0}}$
associated to $\{\phi_{i}-\zeta_{i}\}_{i}$.
Since $\tilde{X}_{i}-\tilde{Z}_{i}=\tilde{X}_{j}-\tilde{Z}_{j}$ on $\mathcal{V}_{i,0}\cap \mathcal{V}_{j,0}$,
$\{\tilde{X}_{i}-\tilde{Z}_{i}\}_{i}$ defines a vector field $\tilde{W}$ on $\tilde{M}_{q_{0}}$
which satisfies
\begin{align*}
&((\omega_{q_{0}})'(\tilde{X}_{i})+
\omega_{q_{0}}\cdot (\tilde{X}_{i})_{z})dz
+
\omega_{q_{0}}\cdot (\tilde{X}_{i})_{\overline{z}}d\overline{z}
-
\frac{L_{\tilde{Z}_{i}}(\omega_{q_{0}}^{2})}{2\omega_{q_{0}}} \\
&=
\left((\omega_{q_{0}})'(\tilde{X}_{i}-\tilde{Z}_{i})+
\omega_{q_{0}}\cdot (\tilde{X}_{i}-\tilde{Z}_{i})_{z}
\right)dz
+
\omega_{q_{0}}\cdot (\tilde{X}_{i}-\tilde{Z}_{i})_{\overline{z}}d\overline{z} \\
&=d(\omega_{q_{0}}\cdot \tilde{W})
\end{align*}
since each $\tilde{Z}_{i}$ is a holomorphic section.
Consequently,
we deduce from the assumption that
\begin{equation}
\label{eq:lem:douady-hubbard-local-coordinate1}
0=
\int_{c}\Omega
=
\int_{c}\left(\Omega-\frac{\tilde{\zeta}_{i}-L_{\tilde{Z}_{i}}(\omega_{q_{0}}^{2})}{2\omega_{q_{0}}}
\right)
=
\int_{c}\frac{\tilde{\varphi}}{2\omega_{q_{0}}}
-
\int_{c}d(\omega_{q_{0}}\cdot W)
=
\int_{c}\frac{\tilde{\varphi}}{2\omega_{q_{0}}}
\end{equation}
for all $c\in H_{1}(\tilde{M}_{q_{0}})$,
and hence $\tilde{\varphi}=0$ and $\varphi=0$.
Therefore $\phi_{i}=\zeta_{i}$ and
$$
(\{\phi_{i}\}_{i},\{X_{ij}\}_{i,j})=
(\{L_{Z_{i}}(q_{0})\}_{i},\{Z_{i}-Z_{j}\}_{i,j}).
$$
Thus,
the cohomology class of the cochain $(\{\phi_{i}\}_{i},\{X_{ij}\}_{i,j})$
vanishes.

As a consequence,
the derivative of the map $\douadyhubbard$ on
the tangent space
$T_{q_{0}}\quaddiff{g,m} \cong \mathbb{H}^{1}(L^{\bullet})$
is injective.
Since the dimensions of $\quaddiff{g,m}$ and ${\rm Hom}(H_{1}(\tilde{M}_{q_{0}})^{-},\mathbb{C})$ are same,
the derivative of $\douadyhubbard$ is an isomorphism
from $T_{q_{0}}\quaddiff{g,m}$
onto ${\rm Hom}(H_{1}(\tilde{M}_{q_{0}})^{-},\mathbb{C})$. 
\end{proof}

%
%

\subsection{Homomorphisms on slices}
\label{subsec:Hubbard-Masur-Slice}
For $F\in \mathcal{MF}-\{0\}$.
we define a slice
$$
\slice(F)=\{q\in \quaddiff{g,m}\mid \vertical(q)=F\}.
$$
for $F$ in $\quaddiff{g,m}$.
Hubbard and Masur showed that the restriction of the projection
$$
\slice(F)\ni q\mapsto \pi (q)\in \teich{g,m}
$$
is a homeomorphism
(\cite[\S2]{MR523212}).
The following two lemmas are well-known
(cf. \cite[Lemma 4.3]{MR523212} and \cite[Proposition 1]{MR1283559}.
See also \cite{2012arXiv1203.0273D}).

\begin{lemma}
\label{lem:constant-representation}
Let $q_{0}\in \quaddiff{g,m}$ be a generic differential.
When we take $U$ sufficiently small,
$$
\slice(\vertical(q_{0}))\cap U=\{q\in \quaddiff{g,m}\mid {\rm Re}(\douadyhubbard(q))=
{\rm Re}(\douadyhubbard(q_{0}))\}
$$
holds.
\end{lemma}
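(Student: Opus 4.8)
The plan is to identify the slice $\slice(\vertical(q_0))$ near $q_0$ as the zero set of a local holomorphic submersion, and to match it against the real-codimension condition ${\rm Re}(\douadyhubbard(q))={\rm Re}(\douadyhubbard(q_0))$. The guiding principle is the classical fact (already implicit in \cite{MR523212}) that the real part of the periods $\chi_q$ on the odd homology records precisely the transverse measure of the horizontal foliation, while the imaginary part records that of the vertical foliation; changing $q$ within the slice $\slice(\vertical(q_0))$ is exactly deformation keeping the vertical foliation — hence its transverse-measure data, i.e. the imaginary part — fixed, which corresponds to freezing ${\rm Re}(\douadyhubbard)$ after the appropriate identification of $\sqrt{q}$-periods with foliation data. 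So the statement is really a matter of carefully pinning down this correspondence in the local coordinate furnished by Lemma \ref{lem:douady-hubbard-local-coordinate}.

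First I would recall that, by Lemma \ref{lem:douady-hubbard-local-coordinate}, after shrinking $U$ we may regard $\douadyhubbard$ as a biholomorphism from $U$ onto an open subset of ${\rm Hom}(H_1(\tilde M_{q_0})^-,\mathbb{C})\cong\mathbb{C}^{6g-6+2m}$; thus both sides of the claimed identity are subsets of $\mathbb{C}^{6g-6+2m}$ and the right-hand side is a real-linear slice of real codimension $6g-6+2m$. Next I would show the inclusion $\subset$: for $q\in\slice(\vertical(q_0))$ one has $\vertical(q)=\vertical(q_0)$, so for every $\alpha\in\mathcal{S}$ the vertical length $i(\vertical(q),\alpha)=\inf_{\alpha'}\int_{\alpha'}|{\rm Re}\sqrt{q}|$ is independent of $q$; translating these cycle-integrals of ${\rm Re}\sqrt{q}$ into periods of $\omega_q$ over cycles in $\tilde M_q$ that are anti-invariant under the covering involution, and using that a closed anti-invariant $1$-form is determined on $H_1(\tilde M_{q_0})^-$ by its periods, one gets that ${\rm Re}\chi_q$ agrees with ${\rm Re}\chi_{q_0}$ on a spanning set of $H_1(\tilde M_{q_0})^-$, hence ${\rm Re}(\douadyhubbard(q))={\rm Re}(\douadyhubbard(q_0))$. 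For the reverse inclusion I would argue by a dimension/openness count: the set $\{{\rm Re}(\douadyhubbard(q))={\rm Re}(\douadyhubbard(q_0))\}$ is a real-analytic submanifold of real dimension $6g-6+2m$ through $q_0$, the slice $\slice(\vertical(q_0))$ is (by Hubbard--Masur, \cite[\S2]{MR523212}) a topological submanifold of the same dimension through $q_0$ sitting inside it by the first inclusion, and $\slice(\vertical(q_0))$ maps homeomorphically onto $\teich{g,m}$, which is connected; so locally the two must coincide. Alternatively, and perhaps more cleanly, I would invoke the derivative computation of Lemma \ref{lem:Douady-Hubbard}: the tangent space to $\slice(\vertical(q_0))$ at $q_0$ consists of those $v$ whose Hubbard--Masur variation preserves $\vertical$, and a direct pairing of $\Phi$ with anti-invariant cycles shows ${\rm Re}\,\douadyhubbard_*[v]=0$ on exactly this subspace, giving equality of tangent spaces and hence, after shrinking $U$, equality of the submanifolds.

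The main obstacle I anticipate is making rigorous the passage from the transverse-measure description $i(\vertical(q),\alpha)=\inf_{\alpha'\sim f(\alpha)}\int_{\alpha'}|{\rm Re}\sqrt{q}|$ to a clean statement about the real parts of the periods $\chi_q$ on $H_1(\tilde M_{q_0})^-$: one must deal with the absolute value and the infimum over the homotopy class (the minimizing representative is a concatenation of horizontal and vertical segments of the $q$-metric), relate the resulting piecewise quantities to honest homology classes on the double cover $\tilde M_q$, and check the anti-invariance so that the identification $H_1(\tilde M_{q_0})^-\cong H_1(\tilde M_q)^-$ of \S\ref{subsec:Branched-Covering-Space-associated-with-generic-differentials} is compatible with all of this. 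Once the dictionary ``${\rm Re}\chi_q \leftrightarrow$ vertical measure, ${\rm Im}\chi_q \leftrightarrow$ horizontal measure'' is set up carefully, the lemma follows; and since we only need it locally near the generic $q_0$, all foliations involved are essentially complete and the minimizing representatives behave well, which keeps the argument manageable.
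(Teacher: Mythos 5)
The paper does not actually prove this lemma: it is stated as ``well-known'' with citations to Hubbard--Masur \cite[Lemma 4.3]{MR523212}, \cite[Proposition 1]{MR1283559} and Dumas, so there is no in-text argument to compare against. Judged on its own, your reconstruction follows the standard route and its overall architecture is sound: establish the inclusion $\subset$ by showing that the vertical measured foliation determines the periods ${\rm Re}\,\chi_{q}$ on $H_{1}(\tilde M_{q_{0}})^{-}$, then upgrade to equality by invariance of domain, using that $\slice(\vertical(q_{0}))$ is (by Hubbard--Masur) a topological manifold of real dimension $6g-6+2m$ sitting inside the real-affine slice of the same dimension in the $\douadyhubbard$-coordinate. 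That second step is correctly executed; note that your ``alternative'' argument via equality of tangent spaces at $q_{0}$ alone would not suffice (equal tangent spaces at one point do not force local coincidence), so the dimension/openness count is the one to keep.

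Two things to fix. First, your opening dictionary is inverted: you write that ${\rm Re}\,\chi_{q}$ records the horizontal foliation and ${\rm Im}\,\chi_{q}$ the vertical one, which contradicts both the paper's convention ($\vertical(q)$ is defined through $|{\rm Re}\sqrt{q}|$, so its transverse measure is read off from ${\rm Re}\,\omega_{q}$) and your own second paragraph; as written the first paragraph ``derives'' the wrong half of the statement and only lands on the right one via the vague phrase ``after the appropriate identification.'' Second, the step you flag as the main obstacle --- passing from $\vertical(q)=\vertical(q_{0})$ (an equality of Whitehead classes defined through infima of $\int|{\rm Re}\sqrt{q}|$ over free homotopy classes) to an equality of honest periods on $H_{1}(\tilde M_{q_{0}})^{-}$ --- is the entire content of the lemma and remains a sketch. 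The mechanism that closes it is the one the paper deploys later in the proof of Theorem \ref{thm:duality}: shrink $U$ so that every $q\in U$ is generic (hence Whitehead equivalence of the vertical foliations is isotopy and the singularity data is locally constant), fix a maximal train track carrying all the nearby vertical foliations, observe that the branch weights are integrals of ${\rm Re}\,\omega_{q}$ over oriented lifts of the branches, and that the cycles $C_{w}$ built from weights span $H_{1}(\tilde M_{q_{0}})^{-}$, so weights and odd real periods determine each other. Spelling that out (or simply citing \cite[Lemma 4.3]{MR523212} for it, as the paper does) would make the proof complete.
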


\begin{lemma}
\label{lem:real-analytic-section}
Let $x_{0}\in \teich{g,m}$ and $F\in \mathcal{MF}$.
If $q_{F,x_{0}}$ is generic,
then
the map
$$
\teich{g,m}\ni x\mapsto q_{F,x}\in \quaddiff{g,m}
$$
is real analytic around $x_{0}$.
\end{lemma}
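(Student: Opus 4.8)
The plan is to promote the local-coordinate statement of Lemma \ref{lem:douady-hubbard-local-coordinate} and the slice description of Lemma \ref{lem:constant-representation} into a real-analytic parametrization of the Hubbard--Masur section. First I would reduce to the case in which $q_{F,x_0}$ is generic; this is exactly the hypothesis, so by Lemma \ref{lem:douady-hubbard-local-coordinate} there is a neighborhood $U$ of $q_0:=q_{F,x_0}$ in $\quaddiff{g,m}$ on which the Douady--Hubbard period map $\douadyhubbard\colon U\to {\rm Hom}(H_1(\tilde M_{q_0})^-,\mathbb{C})$ is a holomorphic coordinate chart. In these coordinates, Lemma \ref{lem:constant-representation} says that the slice $\slice(F)\cap U$ is cut out by the real-linear equations ${\rm Re}(\douadyhubbard(q))={\rm Re}(\douadyhubbard(q_0))$, i.e. it is an honest real-analytic (indeed totally real, real-linear) submanifold of $U$ of real dimension $6g-6+2m$, which is half the real dimension of $\quaddiff{g,m}$.

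Next I would invoke the Hubbard--Masur theorem (recalled in \S\ref{subsec:Hubbard-Masur-Slice}) that the restriction of the projection $\pi\colon \slice(F)\to \teich{g,m}$ is a homeomorphism; call its inverse $x\mapsto q_{F,x}$. The content of the lemma is that this inverse is real analytic near $x_0$. To see this, note that $\pi$ itself is holomorphic, hence real analytic, as a map $\quaddiff{g,m}\to\teich{g,m}$, so its restriction to the real-analytic submanifold $\slice(F)\cap U$ is real analytic. It therefore suffices to show that $\pi\mid_{\slice(F)\cap U}$ is a real-analytic diffeomorphism onto its image, for which by the real-analytic inverse function theorem I only need that its differential at $q_0$ is injective (equivalently, an isomorphism, since $\dim_{\mathbb{R}}\slice(F)=\dim_{\mathbb{R}}\teich{g,m}=6g-6+2m$). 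Injectivity of $d\pi\mid_{T_{q_0}\slice(F)}$ follows from the global injectivity of $\pi\mid_{\slice(F)}$ together with the transversality encoded in Lemma \ref{lem:constant-representation}: in the Douady--Hubbard coordinates the fibers of $\pi$ over a neighborhood of $x_0$ are parametrized (again by Hubbard--Masur, applied to the other foliations, or by the fact that $\slice(\vertical(q_0))$ is a section) transversally to the ${\rm Re}(\douadyhubbard)$-level set, so $T_{q_0}\quaddiff{g,m}$ splits as $T_{q_0}\slice(F)\oplus \ker d\pi_{q_0}$ and $d\pi_{q_0}$ restricts to an isomorphism on the first summand.

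The main obstacle I expect is making the transversality/splitting argument in the previous paragraph fully rigorous, i.e. identifying $\ker d\pi_{q_0}$ with the tangent space to the fiber $\quaddiff{x_0}$ and checking it is complementary to $T_{q_0}\slice(F)={\rm Re}(\douadyhubbard)^{-1}(\text{pt})$. Concretely, since $d\douadyhubbard_{q_0}$ is a $\mathbb{C}$-linear isomorphism onto ${\rm Hom}(H_1^-,\mathbb{C})$ by Lemma \ref{lem:douady-hubbard-local-coordinate}, the real subspace $T_{q_0}\slice(F)$ maps isomorphically onto ${\rm Hom}(H_1^-,\mathbb{R})\cong i\,{\rm Hom}(H_1^-,\mathbb{R})^{\perp}$; one then checks that the fiber direction $\quaddiff{x_0}\subset T_{q_0}\quaddiff{g,m}$ maps onto the purely imaginary part ${\rm Im}$, using that along a fiber the underlying Riemann surface $M_{q_0}$ is fixed so only the periods of $\omega_q$ along cycles dual to the horizontal foliation vary — this is the standard period-coordinate computation (cf. \cite{MR866707}, \cite{MR1094714}, \cite{MR1135877}) whose infinitesimal version is Lemma \ref{lem:Douady-Hubbard}. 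Granting that bookkeeping, the inverse function theorem delivers real analyticity of $x\mapsto q_{F,x}$ on a neighborhood of $x_0$, which is the assertion.
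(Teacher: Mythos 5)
The paper does not actually prove this lemma; it is quoted as well known, with references to Hubbard--Masur \cite[Lemma 4.3]{MR523212} and Masur \cite[Proposition 1]{MR1283559}. So you are supplying an argument where the paper defers to the literature. Your overall strategy --- put Douady--Hubbard period coordinates on a neighborhood of $q_{0}$ (Lemma \ref{lem:douady-hubbard-local-coordinate}), observe via Lemma \ref{lem:constant-representation} that $\slice(F)\cap U$ is the real-analytic level set $\{{\rm Re}(\douadyhubbard(q))={\rm Re}(\douadyhubbard(q_{0}))\}$, and then apply the real-analytic inverse function theorem to $\pi\mid_{\slice(F)\cap U}$ --- is exactly the standard route and would be accepted.

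There is, however, a genuine error in the one step that carries the content. You assert that the fiber direction $\quaddiff{x_{0}}=\ker d\pi_{q_{0}}$ ``maps onto the purely imaginary part'' of ${\rm Hom}(H_{1}(\tilde{M}_{q_{0}})^{-},\mathbb{C})$. It cannot: $d\douadyhubbard_{q_{0}}$ is $\mathbb{C}$-linear, so the image of the fiber is a \emph{complex} subspace (of complex dimension $3g-3+m$, namely the periods of the odd holomorphic $1$-forms $(\pi_{q_{0}})^{*}(\psi)/2\omega_{q_{0}}$, by Lemma \ref{lem:Douady-Hubbard} with $X_{ij}=0$), whereas ${\rm Hom}(H_{1}^{-},i\mathbb{R})$ is totally real; two such subspaces of the same real dimension coincide only if both are zero. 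What you actually need, and what is true, is weaker: the fiber image meets $\{{\rm Re}=0\}$ only in $0$. Concretely, if $v$ is vertical and tangent to the slice, then $d\douadyhubbard(v)$ is the period homomorphism of an odd holomorphic $1$-form $\theta$ all of whose periods over $H_{1}^{-}$ have vanishing real part (and whose periods over $H_{1}^{+}$ vanish automatically by oddness); then ${\rm Re}(\theta)$ is a harmonic form with zero periods, hence $\theta=0$, hence $v=0$ by Lemma \ref{lem:douady-hubbard-local-coordinate}. Complementarity of the two subspaces then follows from the dimension count $(6g-6+2m)+(6g-6+2m)=\dim_{\mathbb{R}}{\rm Hom}(H_{1}^{-},\mathbb{C})$. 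A second, smaller lapse: you begin the injectivity argument by invoking ``the global injectivity of $\pi\mid_{\slice(F)}$,'' but injectivity of a map does not give injectivity of its differential; this clause should simply be deleted, since the transversality argument above (once corrected) is what does the work.
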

%

\section{Family of quadratic differentials with prescribed vertical foliation}
Fix $F\in \mathcal{MF}$.
Consider a family $\{q_{\lambda}\}_{\lambda\in D}$
of holomorphic quadratic differentials such that $\vertical(q_{\lambda})=F$.
Suppose that $q_{0}$ is generic
and $x_{\lambda}=\pi(q_{\lambda})\in \teich{g,m}$ varies holomorphically.
Then,
the family $\{q_{\lambda}\}_{\lambda\in D}$ is a real analytic family of quadratic
differentials
(cf. Lemma \ref{lem:real-analytic-section}).
Take a neighborhood $U$ of $q_{0}$ as the previous sections.

\subsection{Differential via the complex conjugate}
\label{subsec:Differentical-via-the-complex-conjugate}
%
We first notice a simple observation.
Let $\pi\colon E\to M$ be a holomorphic vector bundle.
Let $g\colon \mathbb{D}\to M$ be a holomorphic mapping
and $G\colon \mathbb{D}\to E$ be a $C^{1}$ mapping
such that $\pi\circ G=g$ on $\mathbb{D}$.
Then,
$$
\pi_{*}\left(G_{*}
\left(\frac{\partial}{\partial \overline{\lambda}}\right)
\right)
=g_{*}
\left(\frac{\partial}{\partial \overline{\lambda}}\right)
=0
$$
since $g$ is holomorphic.
Hence,
\begin{equation}
\label{eq:fiber-partial}
\left.G_{*}\left(\frac{\partial}{\partial \overline{\lambda}}
\right)
\right|_{\lambda}\in {\rm Ker}(\pi_{*})\cong E_{g(\lambda)}
\end{equation}
for all $\lambda\in \mathbb{D}$.
%
%
%
%
\begin{lemma}[First derivatives]
\label{lem:transversal}
Let $\{q_{\lambda}\}_{\lambda\in D}$ be 
the family of holomorphic quadratic differentials defined as above.
Let $\eta_{\lambda}$ be a holomorphic quadratic differential
on $X_{q_{\lambda}}$
such that the infinitesimal Bertrami differential
$\dot{\mu}$
associated to the tangent vector 
for the $\lambda$-derivative of the mapping
$$
D\ni \lambda\mapsto \pi(q_{\lambda})\in \teich{g,m}
$$
at $\lambda$ satisfies
$$
\int_{M_{q_{\lambda}}}\dot{\mu}\psi
=
\int_{M_{q_{\lambda}}}\frac{\overline{\eta_{\lambda}}}{|q_{\lambda}|}
\psi
$$
for all holomorphic quadratic differential $\psi$ on $M_{q_{\lambda}}$.
Then,
$$
\frac{\partial\,\chi_{q_{\lambda}}(c)}{\partial \lambda}
=\int_{c}
\overline{
\left(
\frac{(\pi_{q_{\lambda}})^{*}(\eta_{\lambda})}{\omega_{q_{\lambda}}}
\right)
},
\quad
\frac{\partial\,\chi_{q_{\lambda}}(c)}{\partial \overline{\lambda}}
=-\int_{c}\frac{(\pi_{q_{\lambda}})^{*}(\eta_{\lambda})}{\omega_{q_{\lambda}}}
$$
for any $c\in H_{1}(\tilde{M}_{q_{0}})$ and $\lambda\in D$.
\end{lemma}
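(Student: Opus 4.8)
The plan is to treat the two derivatives in parallel, deducing the $\partial/\partial\lambda$ formula from Lemma~\ref{lem:Douady-Hubbard} and the $\partial/\partial\overline{\lambda}$ formula from the simple observation \eqref{eq:fiber-partial} together with Lemma~\ref{lem:constant-representation}. First I would fix $\lambda\in D$ and work in the local coordinates of \S\ref{subsec:background-tangent-space} around $q_{\lambda}$, writing the $\lambda$-derivative of the family $\{q_{\lambda}\}$ as a tangent vector $v\in T_{q_{\lambda}}\quaddiff{g,m}$ represented by a cochain $(\{\tilde\phi_{i}\},\{\tilde X_{ij}\})$, with a chosen $C^{\infty}$ $0$-cochain $\{\tilde X_{i}\}$ filling it in. By Lemma~\ref{lem:Douady-Hubbard}, $\partial\chi_{q_{\lambda}}(c)/\partial\lambda=\int_{c}\Phi$ with $\Phi$ the explicit closed one-form there. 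The point is to recognize $\Phi$, up to an exact form $d(\omega_{q_{\lambda}}\tilde W)$ coming from a holomorphic reshuffling of $\{\tilde X_{i}\}$ (exactly the manipulation carried out in the proof of Lemma~\ref{lem:douady-hubbard-local-coordinate}), as cohomologous to $\overline{(\pi_{q_{\lambda}})^{*}(\eta_{\lambda})/\omega_{q_{\lambda}}}$; here is where the hypothesis that $\dot\mu$ is represented by $\overline{\eta_{\lambda}}/|q_{\lambda}|$ enters. Concretely, the Beltrami differential $\dot\nu=-(\tilde X_{i})_{\overline z}$ on $\tilde M_{q_{\lambda}}$ descends to $\dot\mu$, and $\dot\mu\sim\overline{\eta_{\lambda}}/|q_{\lambda}|$ means the $(0,1)$-part of $\Phi$ is, modulo $\overline\partial$-exact terms, $\omega_{q_{\lambda}}\cdot\overline{\eta_{\lambda}}/|q_{\lambda}|\,d\overline z=\overline{(\pi_{q_{\lambda}})^{*}(\eta_{\lambda})/\omega_{q_{\lambda}}}$ in $(0,1)$-degree; since $\Phi$ is closed and $\overline{(\pi_{q_{\lambda}})^{*}(\eta_{\lambda})/\omega_{q_{\lambda}}}$ is antiholomorphic hence closed, equality of $(0,1)$-parts up to exact forms forces equality of periods.

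For the $\partial/\partial\overline{\lambda}$ formula I would argue as follows. Since $\vertical(q_{\lambda})=F$ is constant in $\lambda$, Lemma~\ref{lem:constant-representation} gives $\operatorname{Re}(\douadyhubbard(q_{\lambda}))=\operatorname{Re}(\douadyhubbard(q_{0}))$ for all $\lambda$; that is, $\operatorname{Re}\chi_{q_{\lambda}}(c)$ is independent of $\lambda$ for every $c\in H_{1}(\tilde M_{q_{0}})^{-}$. Differentiating, $\partial\chi_{q_{\lambda}}(c)/\partial\lambda+\overline{\partial\chi_{q_{\lambda}}(c)/\partial\lambda}$ has vanishing real part... more precisely $\operatorname{Re}\bigl(\partial_{\lambda}\chi+\partial_{\overline\lambda}\chi\bigr)=0$, and using that $\lambda\mapsto\chi_{q_{\lambda}}(c)$ is a genuine $C^{1}$ (indeed real-analytic, by Lemma~\ref{lem:real-analytic-section}) function of $\lambda$ one gets $\partial_{\overline\lambda}\chi_{q_{\lambda}}(c)=-\overline{\partial_{\lambda}\chi_{q_{\lambda}}(c)}$. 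Combining with the first formula, $\partial_{\overline\lambda}\chi_{q_{\lambda}}(c)=-\overline{\int_{c}\overline{(\pi_{q_{\lambda}})^{*}(\eta_{\lambda})/\omega_{q_{\lambda}}}}=-\int_{c}(\pi_{q_{\lambda}})^{*}(\eta_{\lambda})/\omega_{q_{\lambda}}$, which is exactly the second asserted identity. Alternatively, and perhaps more cleanly, one can get it directly: by \eqref{eq:fiber-partial} applied to $G\colon\lambda\mapsto q_{\lambda}$ lying over the holomorphic map $\lambda\mapsto\pi(q_{\lambda})$, the $\overline\lambda$-derivative of $q_{\lambda}$ lies in the fiber $\quaddiff{x_{\lambda}}$, i.e. is a genuine holomorphic quadratic differential $\tilde\psi_{i}$ with no vector-field part, so the analogue of $\Phi$ for the $\overline\lambda$-derivative is just $\tilde\psi_{i}/(2\omega_{q_{\lambda}})\,dz$; pairing against the Beltrami class and invoking the same defining property of $\eta_{\lambda}$ yields $\partial_{\overline\lambda}\chi_{q_{\lambda}}(c)=-\int_{c}(\pi_{q_{\lambda}})^{*}(\eta_{\lambda})/\omega_{q_{\lambda}}$, where the factor $-1$ and the disappearance of the $\tfrac12$ come from the passage between $\omega_{q_{\lambda}}$ and $\omega_{q_{\lambda}}^{2}=(\pi_{q_{\lambda}})^{*}(q_{\lambda})$ and from $\overline{q_{\lambda}}$ versus $q_{\lambda}$ conventions.

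The main obstacle I anticipate is bookkeeping of the identifications and signs: passing between the hypercohomology representative on $M_{q_{\lambda}}$ and its lift to the double cover $\tilde M_{q_{\lambda}}$, between $\omega_{q_{\lambda}}$ and $(\pi_{q_{\lambda}})^{*}(q_{\lambda})$, and correctly matching the factor $1/2$ appearing in $\Phi$ against the factor $1/2$ that appears in the integration-by-parts computation $\int_{M}\dot\mu\varphi=\tfrac12\int_{\tilde M}\dot\nu(\pi_{q_{\lambda}})^{*}(\varphi)$ already used in the proof of Lemma~\ref{lem:douady-hubbard-local-coordinate}. The homological content (the $\overline\partial$-exact reshuffling, the appeal to de Rham, the vanishing of periods against $H_{1}^{+}$) is entirely parallel to that proof, so once the conventions are pinned down the argument is essentially a repetition of it with $\overline{\eta_{\lambda}}/|q_{\lambda}|$ in place of $0$ on the right-hand side.
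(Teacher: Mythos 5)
You have assembled the right ingredients (Lemma \ref{lem:Douady-Hubbard} for the explicit closed form representing $\partial_{\lambda}\chi$, the fiber observation \eqref{eq:fiber-partial}, Lemma \ref{lem:constant-representation} for the reality constraint, and the integration-by-parts manipulation from the proof of Lemma \ref{lem:douady-hubbard-local-coordinate}), but the logical assembly in your first paragraph does not work. The step ``equality of $(0,1)$-parts up to exact forms forces equality of periods'' is false: a nonzero holomorphic $1$-form has vanishing $(0,1)$-part but nonzero periods, so knowing the $(0,1)$-part of the closed form $\Omega$ modulo $\overline{\partial}$-exact terms only pins down the antiholomorphic part of its harmonic representative and leaves the holomorphic part, hence the periods, completely undetermined. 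The hypothesis on $\dot{\mu}$ controls exactly the $(0,1)$-part (through $\dot{\nu}=-(\tilde{X}_{i})_{\overline{z}}$), so by itself it cannot yield the first displayed formula.

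The missing input is the one you relegate to your second paragraph, and the deduction has to be run in the opposite order. The paper first uses \eqref{eq:fiber-partial} to write $\partial_{\overline{\lambda}}\chi_{q_{\lambda}}(c)=\int_{c}(\pi_{q_{0}})^{*}(\varphi)/\omega_{q_{0}}$ for some \emph{a priori unknown} holomorphic quadratic differential $\varphi$ in the fiber direction, then invokes Lemma \ref{lem:constant-representation} to get $\partial_{\overline{\lambda}}\chi=-\overline{\partial_{\lambda}\chi}$, so that $\Omega+\overline{(\pi_{q_{0}})^{*}(\varphi)/\omega_{q_{0}}}$ has vanishing periods and is therefore $df$; it is this step that excludes a holomorphic contribution to $\Omega$. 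Only then does one read off the $(0,1)$-component of $\Omega=-\overline{(\pi_{q_{0}})^{*}(\varphi)/\omega_{q_{0}}}+df$, pair against an arbitrary $\psi\in\quaddiff{x_{0}}$, integrate by parts to discard the $df$ term, and conclude $\int\dot{\mu}\psi=-\int\overline{\varphi}\psi/|q_{0}|$, whence $\varphi=-\eta_{0}$ by the uniqueness in the defining property of $\eta_{0}$. Your ``alternative, cleaner'' route for the $\overline{\lambda}$-derivative has the same problem in compressed form: ``pairing the fiber-direction differential against the Beltrami class'' is not a defined operation, since $\varphi$ is not a priori related to $\dot{\mu}$ at all --- establishing that relation is the entire content of the lemma and requires the chain above.
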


\begin{proof}
The existence and the uniqueness of $\eta_{\lambda}$ follows from
an argumant by David Dumas \cite[Theorem 5.3]{2012arXiv1203.0273D}
since each $q_{\lambda}$ is generic.
Indeed,
the pairing
$$
(\eta,\psi)\mapsto \int_{M_{q_{\lambda}}}\frac{\overline{\eta}\psi}{|q_{\lambda}|}
$$
is a positive definite hermitian inner product
on the space of holomorphic quadratic differentials on $M_{q_{\lambda}}$.
Notice that
$$
\frac{\partial\,\chi_{q_{\lambda}}(c)}{\partial \overline{\lambda}}
=
0=\int_{c}\frac{(\pi_{q_{\lambda}})^{*}(\eta_{\lambda})}{\omega_{q_{\lambda}}}
$$
for any $c\in H_{1}(\tilde{M}_{q_{0}})^{+}$ and $\lambda\in D$.
It suffices to show the case where $\lambda=0$
and $c\in H_{1}(\tilde{M}_{q_{0}})^{-}$.
As we discussed around \eqref{eq:fiber-partial},
from Lemma \ref{lem:Douady-Hubbard}
and the proof of Lemma \ref{lem:douady-hubbard-local-coordinate},
there is a holomorphic quadratic differential $\varphi$ on $X_{q_{0}}$
such that
$$
\left.
\frac{\partial\,\chi_{q_{\lambda}}(c)}{\partial \overline{\lambda}}
\right|_{\lambda=0}
=\int_{c}\frac{(\pi_{q_{0}})^{*}(\varphi)}{\omega_{q_{0}}}
$$
for any $c\in H_{1}(\tilde{M}_{q_{0}})^{-}$
(see \eqref{eq:lem:douady-hubbard-local-coordinate1}).
Since the vertical foliation $|{\rm Re}(\omega_{q_{\lambda}})|$
of $\omega_{q_{\lambda}}$ is unchanged
on $D$,
for all $c\in H_{1}(\tilde{M}_{q_{0}})^{-}$,
the real part of $\chi_{q_{\lambda}}(c)$ is a constant function on $D$
by Lemma \ref{lem:constant-representation}.
Therefore,
$$
\frac{\partial\,\chi_{q_{\lambda}}(c)}{\partial \overline{\lambda}}
=-
\overline{\frac{\partial\,\chi_{q_{\lambda}}(c)}{\partial \lambda}}
$$
on $D$.
Let $(\{\phi_{i}\}_{i},\{X_{ij}\}_{i,j})$
be the cocycle representing the tangent vector
associated to the $\lambda$-derivative
of the mapping $D\ni \lambda\mapsto q_{\lambda}$
at $\lambda=0$.
Let $\{X_{i}\}_{i}$ be a $0$-cochain of the sheaf
of $C^{\infty}$-vector fields with $X_{i}-X_{j}=X_{ij}$.
Then,
$$
\Omega=\left(\frac{\tilde{\phi}_{i}}{2\omega_{q_{0}}}
-\omega_{q_{0}}'\tilde{X}_{i}-\omega_{q_{0}}(\tilde{X}_{i})_{z}
\right)dz
-
(\omega_{q_{0}}(\tilde{X}_{i})_{\overline{z}})
d\overline{z}
$$
on $U_{i}$ defines a closed $1$-form on $\tilde{M}_{q_{0}}$
satisfies
$$
\frac{\partial\,\chi_{q_{\lambda}}(c)}{\partial \lambda}
=\int_{c}\Omega
$$
for all $c\in H_{1}(\tilde{M}_{q_{0}})$.
Therefore,
there is a $C^{\infty}$-function $f$ on $\tilde{M}_{q_{0}}$
such that
$$
\Omega=-\overline{\frac{(\pi_{q_{0}})^{*}(\varphi)(z)}{\omega_{q_{0}}(z)}}d\overline{z}+df,
$$
that is,
\begin{align}
\frac{\tilde{\phi}_{i}}{2\omega_{q_{0}}}
-\omega_{q_{0}}'\tilde{X}_{i}-\omega_{q_{0}}(\tilde{X}_{i})_{z}
&=f_{z}
\label{eq:lem:transversal1}
\\
-\omega_{q_{0}}(\tilde{X}_{i})_{\overline{z}}
&=
-\overline{\left(\frac{(\pi_{q_{0}})^{*}(\varphi)(z)}{\omega_{q_{0}}(z)}\right)}
+f_{\overline{z}}
\label{eq:lem:transversal2}
\end{align}
on $\mathcal{V}_{i,0}$.
From
\eqref{eq:lem:transversal2},
the lift $(\pi_{q_{0}})^{*}(\dot{\mu})$ of
$\dot{\mu}=-(X_{i})_{\overline{z}}$
satisfies
\begin{equation}
\label{eq:lem:transversal3}
(\pi_{q_{0}})^{*}(\dot{\mu})
=
-\frac{\overline{(\pi_{q_{0}})^{*}(\varphi)}}{|\omega_{q_{0}}|^{2}}
+\frac{f_{\overline{z}}}{\omega_{q_{0}}}
\end{equation}
on $\tilde{M}_{q_{0}}$.
Let $\psi$ be a holomorphic quadratic differenital on $M_{q_{0}}$.
From \eqref{eq:lem:transversal3}
\begin{align*}
\int_{\tilde{M}_{q_{0}}}
(\pi_{q_{0}})^{*}(\dot{\mu})
(\pi_{q_{0}})^{*}(\psi)
&=
-\int_{\tilde{M}_{q_{0}}}
\frac{\overline{(\pi_{q_{0}})^{*}(\varphi)}}{|\omega_{q_{0}}|^{2}}
(\pi_{q_{0}})^{*}(\psi)
+
\int_{\tilde{M}_{q_{0}}}
\frac{f_{\overline{z}}}{\omega_{q_{0}}}
(\pi_{q_{0}})^{*}(\psi) \\
&=
-\int_{\tilde{M}_{q_{0}}}
\frac{\overline{(\pi_{q_{0}})^{*}(\varphi)}}{|\omega_{q_{0}}|^{2}}
(\pi_{q_{0}})^{*}(\psi)
+
\int_{\tilde{M}_{q_{0}}}
d\left(
f\frac{(\pi_{q_{0}})^{*}(\psi)}{\omega_{q_{0}}}
\right) \\
&=
-\int_{\tilde{M}_{q_{0}}}
\frac{\overline{(\pi_{q_{0}})^{*}(\varphi)}}{|\omega_{q_{0}}|^{2}}
(\pi_{q_{0}})^{*}(\psi)
\end{align*}
By descending to $M_{q_{0}}$,
we have
$$
\int_{M_{q_{0}}}\frac{\overline{\eta_{0}}}{|q_{0}|}
\psi
=
\int_{M_{q_{0}}}\dot{\mu}\psi
=
-\int_{M_{q_{0}}}\frac{\overline{\varphi}\psi}{|q_{0}|}
$$
for all $\psi$.
Therefore,
$\varphi=-\eta_{0}$ from 
\cite[Theorem 5.3]{2012arXiv1203.0273D}
again.
\end{proof}

\subsection{Laplacian of homomorphisms}
This section is devoted to calculating
the Laplacian of Douady-Hubbard map
\eqref{eq:Douady-Hubbard}.
The author must confess that
for the proof of the plurisubharmonicity of extremal length funcitons,
we need the equation \eqref{eq:Omega-0-Laplacian}
rather than the formula \eqref{lem:transversal2-1} in the following lemma.
However,
we give the following lemma for its own interests.

\begin{lemma}[Laplacian]
\label{lem:transversal2}
Under the assumption in Lemma \ref{lem:transversal2},
we have
\begin{equation}
\label{lem:transversal2-1}
\left.
\frac{\partial^{2}(\chi_{q_{\lambda}}(c))}{\partial \lambda\partial\overline{\lambda}}
\right|_{\lambda=0}
=
-4i\,{\rm Im}
\int_{\tilde{M}_{q_{0}}}
\frac{|(\pi_{q_{0}})^{*}(\eta_{0})|^{2}}{|\omega_{q_{0}}|^{2}}
\frac{\sigma_{c}}{\omega_{q_{0}}}
\end{equation}
for $c\in H_{1}(\tilde{M}_{q_{0}})$,
where $\sigma_{c}=\sigma_{c}(z)dz$ is the holomorphic part of the
reproducing harmonic differential
associated to $c$.
\end{lemma}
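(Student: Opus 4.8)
The plan is to differentiate the first--order formula of Lemma~\ref{lem:transversal} once more. Writing $\partial^{2}/\partial\lambda\partial\overline{\lambda}=\partial_{\lambda}\circ\partial_{\overline{\lambda}}$ and using $\partial_{\overline{\lambda}}\chi_{q_{\lambda}}(c)=-\int_{c}\beta_{\lambda}$ from Lemma~\ref{lem:transversal}, where $\beta_{\lambda}:=(\pi_{q_{\lambda}})^{*}(\eta_{\lambda})/\omega_{q_{\lambda}}$ is a holomorphic $1$-form on $\tilde{M}_{q_{\lambda}}$ anti--invariant under $r_{q_{\lambda}}$, the task reduces to computing $\partial_{\lambda}\int_{c}\beta_{\lambda}$ at $\lambda=0$. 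I would trivialize $\{\tilde{M}_{q_{\lambda}}\}_{\lambda\in D}$ by the coordinates $\{\alpha_{i}(\lambda)\}$ of \S\ref{subsec:background-tangent-space}, so that $\int_{c}\beta_{\lambda}$ becomes an honest function of $\lambda$; then, just as in the proof of Lemma~\ref{lem:Douady-Hubbard}, its $\lambda$--derivative at $0$ is $\int_{c}\dot{\beta}$, where $\dot{\beta}$ is the global closed $1$-form on $\tilde{M}_{q_{0}}$ given on $\mathcal{V}_{i,0}$, with $\beta_{0}=b_{0;i}\,dz$ and $\tilde{X}_{i}$ as in \eqref{eq:vector-fields}, by
\begin{equation*}
\dot{\beta}_{i}=\bigl(\partial_{\lambda}b_{\lambda;i}|_{0}-b_{0;i}'\tilde{X}_{i}-b_{0;i}(\tilde{X}_{i})_{z}\bigr)\,dz-b_{0;i}(\tilde{X}_{i})_{\overline{z}}\,d\overline{z}.
\end{equation*}

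Next I would split $\dot{\beta}=\dot{\beta}^{1,0}+\dot{\beta}^{0,1}$ into bidegrees. By the defining property of the reproducing harmonic differential $\gamma_{c}=\sigma_{c}+\overline{\sigma_{c}}$ of $c$, one has $\int_{c}\theta=\int_{\tilde{M}_{q_{0}}}\gamma_{c}\wedge\theta$ for every closed $\theta$, and since a wedge of two forms of equal bidegree vanishes on a surface,
\begin{equation*}
\int_{c}\dot{\beta}=\int_{\tilde{M}_{q_{0}}}\sigma_{c}\wedge\dot{\beta}^{0,1}+\int_{\tilde{M}_{q_{0}}}\overline{\sigma_{c}}\wedge\dot{\beta}^{1,0}.
\end{equation*}
The $(0,1)$-part is explicit: combining \eqref{eq:infinitesimal1} with \eqref{eq:lem:transversal3} (in which $\varphi=-\eta_{0}$) gives $\dot{\beta}^{0,1}=h\,(\overline{\beta_{0}}+\overline{\partial}f)$ with $h=(\pi_{q_{0}})^{*}(\eta_{0})/\omega_{q_{0}}^{2}$, and the piece $\int_{\tilde{M}_{q_{0}}}h\,\sigma_{c}\wedge\overline{\beta_{0}}$ is, up to a constant factor, exactly $\int_{\tilde{M}_{q_{0}}}\frac{|(\pi_{q_{0}})^{*}(\eta_{0})|^{2}}{|\omega_{q_{0}}|^{2}}\frac{\sigma_{c}}{\omega_{q_{0}}}$, the integral in \eqref{lem:transversal2-1}.

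It then remains to see that the leftover contributions --- the $\overline{\partial}f$--term inside $\int_{\tilde{M}_{q_{0}}}\sigma_{c}\wedge\dot{\beta}^{0,1}$ and the whole $\int_{\tilde{M}_{q_{0}}}\overline{\sigma_{c}}\wedge\dot{\beta}^{1,0}$ --- recombine the main term into $-4i$ times its imaginary part. The decisive structural fact is that $\int_{c}\dot{\beta}$ is forced to be purely imaginary: computing $\partial_{\lambda}\partial_{\overline{\lambda}}\chi_{q_{\lambda}}(c)$ once from $\partial_{\overline{\lambda}}\chi_{q_{\lambda}}(c)=-\int_{c}\beta_{\lambda}$ and once from $\partial_{\lambda}\chi_{q_{\lambda}}(c)=\int_{c}\overline{\beta_{\lambda}}$ (both in Lemma~\ref{lem:transversal}) and equating gives $\int_{c}\dot{\beta}=-\overline{\int_{c}\dot{\beta}}$. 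Hence only the imaginary part of the whole expression needs to be identified; by the same token one disposes of the easy case $c\in H_{1}(\tilde{M}_{q_{0}})^{+}$ (where both sides of \eqref{lem:transversal2-1} vanish by $r_{q_{0}}$--(anti)symmetry) separately and reduces to $c\in H_{1}(\tilde{M}_{q_{0}})^{-}$.

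The genuine work is the $(1,0)$--part $\dot{\beta}^{1,0}$. By \eqref{eq:lem:transversal1} it is built from $\partial_{\lambda}b_{\lambda;i}|_{0}$, hence from the $\lambda$--derivatives of the coefficients of $(\pi_{q_{\lambda}})^{*}(\eta_{\lambda})$ and of $\omega_{q_{\lambda}}$; the latter is $\tilde{\phi}_{i}/(2\omega_{q_{0}})$ as in Lemma~\ref{lem:Douady-Hubbard}, while the former --- the $\lambda$--derivative of $\eta_{\lambda}$ --- is a genuine second--variation quantity along the slice $\slice(F)$. To handle it I would use the identity $\partial_{\overline{\lambda}}q_{\lambda}=-\eta_{\lambda}$ (obtained by re--running the proof of Lemma~\ref{lem:transversal} at a parameter near $0$, legitimate because genericity is an open condition) together with Teichm\"uller's lemma applied to the infinitesimally trivial $\overline{\lambda}$--direction of the family; this expresses $\dot{\beta}^{1,0}$, modulo Lie derivatives along the $\tilde{X}_{i}$ and $\overline{\partial}$ of smooth functions, through $\eta_{0},\omega_{q_{0}},\sigma_{c}$ alone. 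Then Stokes' theorem --- with boundary terms at the branch points of $\tilde{M}_{q_{0}}\to M_{q_{0}}$, treated by the residue calculation already behind Lemma~\ref{lem:Douady-Hubbard} --- reduces the leftover integrals to combinations of $\int_{\tilde{M}_{q_{0}}}\frac{|(\pi_{q_{0}})^{*}(\eta_{0})|^{2}}{|\omega_{q_{0}}|^{2}}\frac{\sigma_{c}}{\omega_{q_{0}}}$ and its conjugate, and the constraint $\int_{c}\dot{\beta}=-\overline{\int_{c}\dot{\beta}}$ fixes the coefficients so that the sum is $-4i\,{\rm Im}$ of that integral. The hard part is exactly this bookkeeping: computing $\dot{\beta}^{1,0}$ --- the $\Omega_{M}^{\otimes 2}$--component of the second variation --- correctly and checking that the spurious real parts cancel; the identity $\int_{c}\dot{\beta}=-\overline{\int_{c}\dot{\beta}}$ is the safety net that makes the cancellation forced rather than coincidental.
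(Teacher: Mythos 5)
Your set-up matches the paper's: you differentiate the first-order formula of Lemma \ref{lem:transversal} to write the mixed derivative as the period $\int_{c}\dot{\beta}$ of a closed $1$-form (the paper's $\Omega_{0}$), you dispose of even cycles by $r_{q_{0}}$-antisymmetry, you pair with the reproducing harmonic differential, and you isolate the purely-imaginary constraint coming from the constancy of ${\rm Re}\,\chi_{q_{\lambda}}(c)$. The gap is at the central step, the $(1,0)$-part. You declare that computing $\dot{\beta}^{1,0}$ (equivalently $\partial_{\lambda}\eta_{\lambda}|_{0}$) is ``the genuine work'', propose to do it via an identity $\partial_{\overline{\lambda}}q_{\lambda}=-\eta_{\lambda}$, Teichm\"uller's lemma and Stokes with residues, and then assert that the constraint $\int_{c}\dot{\beta}=-\overline{\int_{c}\dot{\beta}}$ ``fixes the coefficients'' so that everything recombines into $-4i\,{\rm Im}$ of the desired integral. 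That last inference is not valid: knowing a number is purely imaginary does not determine it, so the constraint cannot by itself force the unknown $(1,0)$-contributions (and your leftover $h\,\overline{\partial}f$ term, which is neither exact nor co-exact and is therefore not killed by pairing with $\sigma_{c}$) to produce the stated coefficient; and the actual second-variation computation you defer is never carried out. As written, the crux of the lemma is missing.

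The paper avoids computing $\dot{\eta}_{0}$ altogether, and this is the idea you are missing: since every period of $\Omega_{0}+\overline{\Omega_{0}}$ vanishes, this closed form is exact, $\Omega_{0}=-\overline{\Omega_{0}}+df$. Reading off the $(1,0)$-component of this identity expresses the unknown $(1,0)$-part of $\Omega_{0}$ (the one containing $\dot{\eta}_{0}$) as the conjugate of the explicitly known $(0,1)$-part plus $f_{z}\,dz$; this yields \eqref{eq:Omega-0-Laplacian}, in which the only non-harmonic remainder is $\tfrac12 df+\tfrac{i}{2}\,{}^{*}df$, an exact-plus-co-exact term annihilated when pairing with the harmonic differential $\sigma(c)$. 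The pairing then gives $-4i\,{\rm Im}\int_{\tilde{M}_{q_{0}}}\frac{(\pi_{q_{0}})^{*}(\eta_{0})}{\omega_{q_{0}}}(\pi_{q_{0}})^{*}(\dot{\mu})\,\sigma_{c}$, and only at this final stage is $\dot{\mu}$ converted into $\overline{\eta_{0}}/|q_{0}|$, using that for odd $c$ the form $\sigma_{c}$ vanishes at the branch points so that $\frac{(\pi_{q_{0}})^{*}(\eta_{0})}{\omega_{q_{0}}}\sigma_{c}$ descends to a holomorphic quadratic differential on $M_{q_{0}}$, to which the defining property of $\eta_{0}$ applies. So the purely-imaginary constraint is not a ``safety net'' checking a hard computation; used as an exactness statement it replaces that computation. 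To repair your argument, upgrade your period identity $\int_{c}\dot{\beta}=-\overline{\int_{c}\dot{\beta}}$ (valid for all cycles) to the exactness of $\dot{\beta}+\overline{\dot{\beta}}$, substitute for $\dot{\beta}^{1,0}$ accordingly, and postpone the substitution of \eqref{eq:lem:transversal3} until after the harmonic projection, noting the descent argument above.
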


\begin{proof}
When the homology class $c$ is in the even homology group,
$\chi_{q_{\lambda}}(c)=0$.
The integrand of the right-hand side of \eqref{lem:transversal2-1}
satisfies
$$
r_{0}^{*}\left(\frac{|(\pi_{q_{0}})^{*}(\eta_{0})|^{2}}{|\omega_{q_{0}}|^{2}}
\frac{\sigma_{c}}{\omega_{q_{0}}}\right)
=
\frac{|(\pi_{q_{0}})^{*}(\eta_{0})|^{2}}{|-\omega_{q_{0}}|^{2}}
\frac{\sigma_{c}}{-\omega_{q_{0}}}
=
-
\frac{|(\pi_{q_{0}})^{*}(\eta_{0})|^{2}}{|\omega_{q_{0}}|^{2}}
\frac{\sigma_{c}}{\omega_{q_{0}}}
$$
and hence,
the integral over $\tilde{M}_{q_{0}}$ vanishes.
This means that \eqref{lem:transversal2-1} holds for homology classes
in the even homology group.

By applying the same argument as that of Lemma \ref{lem:Douady-Hubbard},
one obtain
$$
\left.
\frac{\partial^{2}\,\chi_{q_{\lambda}}(c)}{\partial\lambda\partial \overline{\lambda}}
\right|_{\lambda=0}
=\int_{c}\Omega_{0}
$$
for any $c\in H_{1}(\tilde{M}_{q_{0}})$,
where $\Omega_{0}$ is a closed differential of the form
\begin{align*}
\left(
\dot{\eta}_{0}
+\left(\frac{(\pi_{q_{0}})^{*}(\eta_{0})}{\omega_{q_{0}}}\right)'(\tilde{X}_{i})
+
\left(\frac{(\pi_{q_{0}})^{*}(\eta_{0})}{\omega_{q_{0}}}\right)(\tilde{X}_{i})_{z}
\right)dz 
+
\left(
\frac{(\pi_{q_{0}})^{*}(\eta_{0})}{\omega_{q_{0}}}\right)(\tilde{X}_{i})_{\overline{z}}
d\overline{z}
\end{align*}
and
$$
\dot{\eta}_{0}
=
-\left.
\frac{\partial}{\partial\lambda}\left(\frac{(\pi_{q_{\lambda}})^{*}(\eta_{\lambda})}{\omega_{q_{\lambda}}}\right)
\right|_{\lambda=0}.
$$
Since the real part of
the function $D\ni \lambda\mapsto \chi_{q_{\lambda}}(c)$
is constant
and the Laplacian is the real operator,
$$
\int_{c}\Omega_{0}=
\left.
\frac{\partial^{2}\,\chi_{q_{\lambda}}(c)}{\partial\lambda\partial \overline{\lambda}}
\right|_{\lambda=0}
=
-\overline{\left.
\frac{\partial^{2}\,\chi_{q_{\lambda}}(c)}{\partial\lambda\partial \overline{\lambda}}
\right|_{\lambda=0}
}
=
-\overline{\left(\int_{c}\Omega_{0}\right)}
=-\int_{c}\overline{\Omega_{0}}
$$
for all $c\in H_{1}(\tilde{M}_{q_{0}})$.
Hence,
there is a $C^{\infty}$-function $f$ on $\tilde{M}_{q_{0}}$
such that
$$
\Omega_{0}=-\overline{\Omega_{0}}+df.
$$
Namely,
we have
\begin{align*}
\dot{\eta}_{0}
+\left(\frac{(\pi_{q_{0}})^{*}(\eta_{0})}{\omega_{q_{0}}}\right)'(X_{i})
+
\left(\frac{(\pi_{q_{0}})^{*}(\eta_{0})}{\omega_{q_{0}}}\right)(\tilde{X}_{i})_{z}
&=
-\overline{
\left(
\frac{(\pi_{q_{0}})^{*}(\eta_{0})}{\omega_{q_{0}}}\right)(\tilde{X}_{i})_{\overline{z}}}
+f_{z} \\
&=
\overline{
\left(
\frac{(\pi_{q_{0}})^{*}(\eta_{0})}{\omega_{q_{0}}}\right)
}
(\pi_{q_{0}})^{*}(\dot{\mu})
+f_{z},
\end{align*}
where $\dot{\mu}=-(X_{i})_{\overline{z}}$ is the infinitesimal Beltrami
differential representing the tangent vector 
of the mapping $D\ni \mapsto \pi(q_{\lambda})$
at $\lambda=0$
(cf. \eqref{eq:infinitesimal1}).
Applying the above equation,
we also obtain
\begin{align}
\Omega_{0}
&=
\left(
\overline{
\left(
\frac{(\pi_{q_{0}})^{*}(\eta_{0})}{\omega_{q_{0}}}\right)
(\pi_{q_{0}})^{*}(\dot{\mu})
}
+f_{z}
\right)dz
-
\left(
\frac{(\pi_{q_{0}})^{*}(\eta_{0})}{\omega_{q_{0}}}\right)
(\pi_{q_{0}})^{*}(\dot{\mu})
d\overline{z}
\nonumber
\\
&=
\overline{
\left(
\frac{(\pi_{q_{0}})^{*}(\eta_{0})}{\omega_{q_{0}}}\right)
(\pi_{q_{0}})^{*}(\dot{\mu})
}
dz
-
\left(
\frac{(\pi_{q_{0}})^{*}(\eta_{0})}{\omega_{q_{0}}}\right)
(\pi_{q_{0}})^{*}(\dot{\mu})
d\overline{z}
\label{eq:Omega-0-Laplacian}
\\
&\qquad\qquad+\frac{1}{2}df+\frac{i}{2}\,{}^{*}df.
\nonumber
\end{align}
Let $\sigma(c)=\sigma_{c}(z)dz+\overline{\sigma_{c}(z)}d\overline{z}$
be the reproducing harmonic differential on $\tilde{M}_{q_{0}}$
associated to $c\in H_{1}(\tilde{M}_{q_{0}})$,
where $\sigma_{c}(z)dz$ is a holomorphic $1$-form on $\tilde{M}_{q_{0}}$
(we call this holomorphic $1$-form the \emph{holomorphic part} of $\sigma(c)$).
Namely,
the differential $\sigma(c)$ is a unique harmonic differential on $\tilde{M}_{q_{0}}$
which satisfies
$$
\int_{c}\omega=\int_{\tilde{M}_{q_{0}}}\omega\wedge {}^{*}\overline{\sigma(c)}
$$
for all closed $1$-form $\omega$ on $\tilde{M}_{q_{0}}$.
Then,
by applying
the orthogonal decomposition theorem
for the space of $L^{2}$-closed forms,
we have
\begin{align*}
\left.
\frac{\partial^{2}\,\chi_{q_{\lambda}}(c)}{\partial\lambda\partial \overline{\lambda}}
\right|_{\lambda=0}
&=
\int_{c}\Omega_{0}
=\int_{\tilde{M}_{q_{0}}}\Omega_{0}\wedge {}^{*}\overline{\sigma(c)}\\
&=\int_{\tilde{M}_{q_{0}}}
\left(
\overline{\frac{(\pi_{q_{0}})^{*}(\eta_{0})}{\omega_{q_{0}}}
(\pi_{q_{0}})^{*}(\dot{\mu})}\,
dz-
\frac{(\pi_{q_{0}})^{*}(\eta_{0})}{\omega_{q_{0}}}
(\pi_{q_{0}})^{*}(\dot{\mu})\,
d\overline{z}
\right) \\
&\quad\quad\qquad\qquad
\wedge
{}^{*}\overline{(\sigma_{c}(z)dz+\overline{\sigma_{c}(z)}d\overline{z})}
\\
&=
\int_{\tilde{M}_{q_{0}}}
\left(
\overline{\frac{(\pi_{q_{0}})^{*}(\eta_{0})}{\omega_{q_{0}}}
(\pi_{q_{0}})^{*}(\dot{\mu})}\,
dz-
\frac{(\pi_{q_{0}})^{*}(\eta_{0})}{\omega_{q_{0}}}
(\pi_{q_{0}})^{*}(\dot{\mu})\,
d\overline{z}
\right) \\
&\quad\quad\qquad\qquad
\wedge
(-i\overline{\sigma_{c}}d\overline{z}+i\sigma_{c}dz)
\\
&=
-4i\,{\rm Im}
\int_{\tilde{M}_{q_{0}}}
\frac{(\pi_{q_{0}})^{*}(\eta_{0})}{\omega_{q_{0}}}
(\pi_{q_{0}})^{*}(\dot{\mu})\sigma_{c}.
\end{align*}
Now,
we suppose that the homology class $c$ is in $H_{1}(\tilde{M}_{q_{0}})^{-}$.
Since $(r_{0})_{*}(c)=-c$,
$r_{0}^{*}\sigma(c)=-\sigma(c)$.
This means that $\sigma(c)$ has zeroes at each branch points of
$\pi_{q_{0}}\colon
\tilde{M}_{q_{0}}\to M_{q_{0}}$
and the holomorphic quadratic differential
$$
\frac{(\pi_{q_{0}})^{*}(\eta_{0})}{\omega_{q_{0}}}\sigma_{c}=
\frac{(\pi_{q_{0}})^{*}(\eta_{0})(z)}{\omega_{q_{0}}(z)}\sigma_{c}(z)dz^{2}
$$
descends to a holomorphic quadratic differential on $M_{q_{0}}$.
Therefore,
we obtain
from the definition of $\eta_{0}$
that
\begin{align}
\left.
\frac{\partial^{2}\,\chi_{q_{\lambda}}(c)}{\partial\lambda\partial \overline{\lambda}}
\right|_{\lambda=0}
&=
-4i\,{\rm Im}
\int_{\tilde{M}_{q_{0}}}
\frac{(\pi_{q_{0}})^{*}(\eta_{0})}{\omega_{q_{0}}}
\frac{\overline{(\pi_{q_{0}})^{*}(\eta_{0})}}{|\omega_{q_{0}}|^{2}}\sigma_{c}
\nonumber
\\
&=
-4i\,{\rm Im}
\int_{\tilde{M}_{q_{0}}}
\frac{|(\pi_{q_{0}})^{*}(\eta_{0})|^{2}}{|\omega_{q_{0}}|^{2}}
\frac{\sigma_{c}}{\omega_{q_{0}}}
\label{eq:laplacian-1}
\end{align}
(cf. Lemma \ref{lem:transversal}).
This is what we wanted.
\end{proof}


\section{Levi forms and Plurisubharmonicity of subspecies}
\subsection{Levi forms of extremal length functions}
For a real-valured $C^{2}$-function  $U$  on a complex manifold $N$,
the \emph{Levi form} of $U$ is an hermitian inner product on
the holomorphic tangent bundle $TN$ of $N$ defined as
$$
\levi{U}[v,\overline{v}]
=\sum_{i,j=1}^{n}
\frac{\partial^{2}U}{\partial z_{i}\partial \overline{z}_{j}}
v_{i}\overline{v_{j}}
$$
for $v=\sum_{i=1}^{n}v_{i}(\partial/\partial z_{i})\in TN$.
Let $p\in N$ and $v\in T_{p}N$.
Let $F\colon \mathbb{D}\to N$ be a holomorphic mapping
with $f(0)=p$ and $f_{*}(\partial/\partial\lambda\mid_{\lambda=0})=v$,
then
$$
\levi{U}[v,\overline{v}]=
\left.
\frac{\partial^{2}(U\circ f)}{\partial\lambda\partial\overline{\lambda}}
\right|_{\lambda=0}.
$$
\begin{theorem}[Levi forms]
\label{thm:leviform}
Let $x_{0}=(M_{0},f_{0})\in \teich{g,m}$ and $v=[\dot{\mu}]\in T_{x_{0}}\teich{g,m}$.
Let $F\in \mathcal{MF}$.
Suppose that the Hubbard-Masur differential $q_{0}=q_{F,x_{0}}$ is generic.
Then,
the extremal length function is real analytic around $x_{0}$,
and
the Levi form of the extremal length function
$\teich{g,m}\ni x\mapsto \ext_{x}(F)$
at $x_{0}$ satisfies
$$
\levi{\ext_{\cdot }(F)}[v,\overline{v}]
=
2\int_{M_{0}}\frac{|\eta_{v}|^{2}}{|q_{0}|},
$$
where $\eta_{v}$ is a unique
holomorphic quadratic differential on $M_{0}=M_{q_{0}}$
satisfying that
$$
\langle v,\psi\rangle=
\int_{M_{0}}\dot{\mu}\psi=
\int_{M_{0}}\frac{\overline{\eta_{v}}}{|q_{0}|}\psi
$$
for all $\psi\in \quaddiff{x_{0}}$.
\end{theorem}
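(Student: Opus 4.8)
**

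The plan is to realize the extremal length function locally as a composition of the Douady--Hubbard period map $\douadyhubbard$ with an explicit function on $\mathrm{Hom}(H_{1}(\tilde{M}_{q_{0}})^{-},\mathbb{C})$, and then differentiate. Recall from \eqref{eq:JS_extremallength_norm_length} that $\ext_{x}(F)=\|q_{F,x}\|$, and that the total mass of $|q_{F,x}|$ can be recovered from the $1$-form $\omega_{q_{F,x}}$ on the double cover $\tilde{M}_{q_{F,x}}$ via a period-type computation: if $\{c_{k},c_{k}^{*}\}$ is a symplectic basis of $H_{1}(\tilde{M}_{q_{0}})^{-}$, then $\|q_{F,x}\|$ is (up to a universal constant) the symplectic pairing of the imaginary parts of the periods $\chi_{q_{F,x}}(c_{k})=\int_{c_{k}}\omega_{q_{F,x}}$, i.e. a real quadratic form in $\mathrm{Im}\,\douadyhubbard$. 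By Lemma~\ref{lem:constant-representation} the real parts $\mathrm{Re}\,\chi_{q_{\lambda}}(c)$ are constant along the slice $\slice(F)$, so along the family $\{q_{\lambda}\}$ of Hubbard--Masur differentials for $F$ over a holomorphic disk $\lambda\mapsto x_{\lambda}$, only the imaginary parts $\mathrm{Im}\,\chi_{q_{\lambda}}(c)$ move, and $\ext_{x_{\lambda}}(F)$ becomes an explicit real-analytic (indeed real-quadratic) function of these. Real analyticity of $x\mapsto\ext_{x}(F)$ near $x_{0}$ then follows from Lemma~\ref{lem:real-analytic-section} together with the explicit formula.

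Next I would compute the Levi form via $\levi{\ext_{\cdot}(F)}[v,\overline v]=\partial^{2}(\ext_{x_{\lambda}}(F))/\partial\lambda\,\partial\overline\lambda|_{\lambda=0}$, choosing $f(\lambda)=x_{\lambda}$ holomorphic with $f(0)=x_{0}$, $f_{*}(\partial/\partial\lambda)=v$, and lifting to the family $q_{\lambda}=q_{F,x_{\lambda}}$. Writing $\ext_{x_{\lambda}}(F)$ as a quadratic expression in $\mathrm{Im}\,\chi_{q_{\lambda}}(c)=-\tfrac{1}{2i}(\chi_{q_{\lambda}}(c)-\overline{\chi_{q_{\lambda}}(c)})$, the mixed derivative $\partial^{2}/\partial\lambda\,\partial\overline\lambda$ produces two kinds of terms: products of first derivatives $\partial_{\lambda}\chi_{q_{\lambda}}(c)\cdot\overline{\partial_{\lambda}\chi_{q_{\lambda}}(c')}$, and second derivatives $\partial^{2}_{\lambda\overline\lambda}\chi_{q_{\lambda}}(c)$. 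For the first-derivative terms I invoke Lemma~\ref{lem:transversal}: $\partial_{\lambda}\chi_{q_{\lambda}}(c)|_{\lambda=0}=\int_{c}\overline{(\pi_{q_{0}})^{*}(\eta_{0})/\omega_{q_{0}}}$, where $\eta_{0}=\eta_{v}$ is precisely the quadratic differential in the statement (with $\psi\mapsto\int\dot\mu\psi=\int(\overline{\eta_{v}}/|q_{0}|)\psi$). For the second-derivative terms, Lemma~\ref{lem:transversal2} (equation \eqref{lem:transversal2-1}, or more directly the computation leading to \eqref{eq:laplacian-1}) gives $\partial^{2}_{\lambda\overline\lambda}\chi_{q_{\lambda}}(c)|_{\lambda=0}=-4i\,\mathrm{Im}\int_{\tilde M_{q_{0}}}\frac{|(\pi_{q_{0}})^{*}(\eta_{0})|^{2}}{|\omega_{q_{0}}|^{2}}\frac{\sigma_{c}}{\omega_{q_{0}}}$, which is purely imaginary and hence contributes only through the imaginary-part structure. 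Assembling these with the Riemann bilinear relations — rewriting the symplectic sum $\sum_{k}(\text{period over }c_{k})(\text{period over }c_{k}^{*})-(\cdots)$ as an integral $\int_{\tilde M_{q_{0}}}$ of wedge products of the relevant holomorphic $1$-forms against their conjugates — collapses everything to a single integral over $\tilde M_{q_{0}}$, which descends (using $r_{0}^{*}\omega_{q_{0}}=-\omega_{q_{0}}$ and the oddness of the relevant forms) to $2\int_{M_{0}}|\eta_{v}|^{2}/|q_{0}|$ on the base. The factor $2$ tracks the degree of $\pi_{q_{0}}$.

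The main obstacle I anticipate is the bookkeeping in the Riemann-bilinear-relations step: one must carefully express $\ext_{x_{\lambda}}(F)=\|q_{\lambda}\|$ as the correct symplectically-invariant quadratic form in the periods $\chi_{q_{\lambda}}(c)$ (this uses that $\tfrac{i}{2}\int_{\tilde M}\omega_{q}\wedge\overline{\omega_{q}}$ equals twice $\|q\|$ on the base, and equals the symplectic pairing of the period vector with its conjugate), then show that the cross terms involving $\mathrm{Re}\,\chi_{q_{\lambda}}(c)$ — which are constant in $\lambda$ by Lemma~\ref{lem:constant-representation} — drop out of the mixed second derivative, and finally verify that the surviving terms recombine into $\int_{\tilde M_{q_{0}}}|(\pi_{q_{0}})^{*}(\eta_{0})|^{2}/|\omega_{q_{0}}|^{2}$ with the right sign and constant. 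A secondary but routine point is checking that $\eta_{0}$ from Lemma~\ref{lem:transversal} coincides with $\eta_{v}$ as defined here and that $(\pi_{q_{0}})^{*}(\eta_{0})/\omega_{q_{0}}$ is a genuine holomorphic $1$-form on $\tilde M_{q_{0}}$ (it is, since $\eta_{0}$ is holomorphic and $\omega_{q_{0}}^{2}=(\pi_{q_{0}})^{*}(q_{0})$, with the zeros of $\omega_{q_{0}}$ at branch points absorbed by the simple zeros of $(\pi_{q_{0}})^{*}(\eta_{0})$ only when $\eta_{0}$ vanishes there — in general one works with the quotient differential on $M_{0}$ directly, as in the proof of Lemma~\ref{lem:transversal2}). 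Once the integral identity is in place, positivity of the Levi form, hence local plurisubharmonicity of $\ext_{\cdot}(F)$ at generic points, is immediate, and this feeds into the global statement of Theorem~\ref{thm:pluri-subharmonicity}.
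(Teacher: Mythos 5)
Your proposal follows essentially the same route as the paper: expressing $\ext_{x_\lambda}(F)=\tfrac12\|\omega_{q_\lambda}\|^2$ via the Riemann bilinear relation as a quadratic form in the periods $\chi_{q_\lambda}(c)$, feeding in Lemmas \ref{lem:transversal} and \ref{lem:transversal2} (with the constancy of ${\rm Re}\,\chi_{q_\lambda}(c)$ from Lemma \ref{lem:constant-representation}) for the first and second derivatives, and collapsing the symplectic sums back to the integral $2\int_{M_0}|\eta_v|^2/|q_0|$. The bookkeeping issues you flag (cancellation of the first-derivative cross terms and the descent from $\tilde M_{q_0}$ to $M_0$) are exactly the points the paper's proof handles, and your plan resolves them the same way.
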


\begin{remark}
In \S\ref{sec:geometric-interpretation},
we will discuss a geometric interpretation
of the anti-complex linear map
$$
T_{x_{0}}\teich{g,m}\ni
v\mapsto \eta_{v}
\in \quaddiff{x_{0}}.
$$
\end{remark}

\begin{proof}[Proof of Theorem \ref{thm:leviform}]
We continue to use the symbols defined in the previous sections.
Let $\tilde{g}=4g-2+m$ $(\ge 2)$ be the genus of $\tilde{M}_{q_{0}}$.
Let $\{\alpha_{k},\beta_{k}\}_{k=1}^{\tilde{g}}$ be
a symplectic generator of $H_{1}(\tilde{M}_{q_{0}})$.
Let $x_{\lambda}=\pi(q_{\lambda})$ for instance.
By the bilinear relation,
we have
\begin{align*}
\ext_{x_{\lambda}}(F)
&=\frac{1}{2}\|\omega_{q_{\lambda}}\|^{2}
=\frac{1}{4}\int_{\tilde{M}_{q_{\lambda}}}\omega_{q_{\lambda}}\wedge
{}^{*}\overline{\omega_{q_{\lambda}}}
\\
&=\frac{i}{4}\sum_{k=1}^{\tilde{g}}
\left(
\chi_{q_{\lambda}}(\alpha_{k})\overline{\chi_{q_{\lambda}}(\beta_{k})}
-\chi_{q_{\lambda}}(\beta_{k})\overline{\chi_{q_{\lambda}}(\alpha_{k})}
\right)
\end{align*}
for $\lambda\in D$
(cf. \cite[Corollary in \S III.2.3]{MR1139765}).
From Lemmas \ref{lem:douady-hubbard-local-coordinate}
and \ref{lem:constant-representation},
$\chi_{q_{F,x}}(\alpha_{k})$ and $\chi_{q_{F,x}}(\beta_{k})$
are real analytic  around $x_{0}$ for each $k$,
and hence
so is the extremal length function.
From Lemma \ref{lem:transversal2},
we have
\begin{align*}
&
\left.
\frac{\partial^{2}}{\partial\lambda\partial\overline{\lambda}}
\left(\chi_{q_{\lambda}}(c_{1})\overline{\chi_{q_{\lambda}}(c_{2})}
\right)
\right|_{\lambda=0}\\
&=
\left.
\frac{\partial^{2}\chi_{q_{\lambda}}(c_{1})}{\partial\lambda\partial\overline{\lambda}}
\right|_{\lambda=0}
\overline{\chi_{q_{0}}(c_{2})}
+\chi_{q_{0}}(c_{1})
\left.
\frac{\partial^{2}\overline{\chi_{q_{\lambda}}(c_{2})}}{\partial\lambda\partial\overline{\lambda}}
\right|_{\lambda=0} \\
&\quad\quad
+\left.
\frac{\partial \chi_{q_{\lambda}}(c_{1})}{\partial\lambda}
\right|_{\lambda=0}
\left.
\frac{\partial \overline{\chi_{q_{\lambda}}(c_{2})}}{\partial\overline{\lambda}}
\right|_{\lambda=0}
+
\left.
\frac{\partial \chi_{q_{\lambda}}(c_{1})}{\partial\overline{\lambda}}
\right|_{\lambda=0}
\left.
\frac{\partial \overline{\chi_{q_{\lambda}}(c_{2})}}{\partial \lambda}
\right|_{\lambda=0}
\\
&=
\left.
\frac{\partial^{2}\chi_{q_{\lambda}}(c_{1})}{\partial\lambda\partial\overline{\lambda}}
\right|_{\lambda=0}
\overline{\chi_{q_{0}}(c_{2})}
+\chi_{q_{0}}(c_{1})
\left.
\frac{\partial^{2}\overline{\chi_{q_{\lambda}}(c_{2})}}{\partial\lambda\partial\overline{\lambda}}
\right|_{\lambda=0} \\
&\quad\quad
+\left.
\frac{\partial \chi_{q_{\lambda}}(c_{1})}{\partial\lambda}
\right|_{\lambda=0}
 \overline{
 \left.
\frac{\partial\chi_{q_{\lambda}(c_{2})}}{\partial\lambda}
\right|_{\lambda=0}
}
+
 \overline{
 \left.
\frac{\partial \chi_{q_{\lambda}}(c_{1})}{\partial\lambda}
\right|_{\lambda=0}
}
\left.
\frac{\partial \chi_{q_{\lambda}}(c_{2})}{\partial \lambda}
\right|_{\lambda=0}
\end{align*}
for $c_{1},c_{2}\in H_{1}(\tilde{M}_{q_{0}})$,
since the real part of $\chi_{q_{\lambda}}(c)$ is constant
for all $c\in H_{1}(\tilde{M}_{q_{0}})$
by Lemma \ref{lem:constant-representation}.
Notice that the sum of the last two terms of above calculation is a real number.
Therefore,
we obtain
\begin{align*}
&
\left.
\frac{\partial^{2}}{\partial\lambda\partial\overline{\lambda}}\left(
\chi_{q_{\lambda}}(\alpha_{k})\overline{\chi_{q_{\lambda}}(\beta_{k})}
-\chi_{q_{\lambda}}(\beta_{k})\overline{\chi_{q_{\lambda}}(\alpha_{k})}
\right)
\right|_{\lambda=0}\\
&=
\int_{\alpha_{k}}\Omega_{0}\overline{\int_{\beta_{k}}\omega_{q_{0}}}
+\overline{\int_{\beta_{k}}\Omega_{0}}
\int_{\alpha_{k}}\omega_{q_{0}}
-
\int_{\beta_{k}}\Omega_{0}\overline{\int_{\alpha_{k}}\omega_{q_{0}}}
-\int_{\alpha_{k}}\overline{\Omega_{0}}
\int_{\beta_{k}}\omega_{q_{0}} \\
&=
\int_{\alpha_{k}}\Omega_{0}\int_{\beta_{k}}\overline{\omega_{q_{0}}}
-
\int_{\beta_{k}}\Omega_{0}\int_{\alpha_{k}}\overline{\omega_{q_{0}}}
+\int_{\beta_{k}}\overline{\Omega_{0}}
\int_{\alpha_{k}}\omega_{q_{0}}
-\int_{\alpha_{k}}\overline{\Omega_{0}}
\int_{\beta_{k}}\omega_{q_{0}}
\end{align*}
where $\Omega_{0}$ is a closed form defined as \eqref{eq:Omega-0-Laplacian}.
Since $\omega_{q_{0}}$ is a holomorphic differential,
${}^{*}\omega_{q_{0}}$
and ${}^{*}\overline{\omega_{q_{0}}}$
are harmonic differentials and they are
perpendicular to the exact and co-exact differentials
in the $L^{2}$-inner product
of differential forms
(cf. \cite[\S II.3]{MR1139765}).
From \eqref{eq:Omega-0-Laplacian},
we obtain
\begin{align*}
\levi{\ext_{\cdot }(F)}[v,\overline{v}]
&=
\left.
\frac{\partial^{2}}{\partial\lambda\partial\overline{\lambda}}
\ext_{x_{\lambda}}(F)
\right|_{\lambda=0}\\
&=
\frac{i}{4}
\sum_{k=1}^{\tilde{g}}
\left(
\int_{\alpha_{k}}\Omega_{0}\int_{\beta_{k}}\overline{\omega_{q_{0}}}
-
\int_{\beta_{k}}\Omega_{0}\int_{\alpha_{k}}\overline{\omega_{q_{0}}}
\right) \\
&\quad
+
\frac{i}{4}
\sum_{k=1}^{\tilde{g}}
\left(
\int_{\beta_{k}}\overline{\Omega_{0}}
\int_{\alpha_{k}}\omega_{q_{0}}
-\int_{\alpha_{k}}\overline{\Omega_{0}}
\int_{\beta_{k}}\omega_{q_{0}}
\right) \\
&=\frac{i}{4}
\int_{\tilde{M}_{q_{0}}}\Omega_{0}\wedge \overline{\omega_{q_{0}}}
-
\frac{i}{4}
\int_{\tilde{M}_{q_{0}}}\overline{\Omega_{0}}\wedge \omega_{q_{0}}
\\
&={\rm Re}
\int_{\tilde{M}_{q_{0}}}
\frac{(\pi_{q_{0}})^{*}(\eta_{v})}{\omega_{q_{0}}}
(\pi_{q_{0}})^{*}(\dot{\mu})\omega_{q_{0}}
\\
&=
2{\rm Re}
\int_{M_{q_{0}}}
\eta_{v}\dot{\mu}
=
2\int_{M_{0}}
\frac{|\eta_{v}|^{2}}{|q_{0}|}
\end{align*}
from the definition of the differential $\eta_{v}$
(cf. \cite[Proposition III.2.3]{MR1139765}).
This implies what we wanted.
\end{proof}

From Theorem \ref{thm:leviform},
we deduce the following inequality.

\begin{corollary}[Gradients and Levi forms]
\label{coro:strong-positivity}
Let $x_{0}=(M_{0},f_{0})\in \teich{g,m}$ and $v=[\dot{\mu}]\in T_{x_{0}}\teich{g,m}$.
Let $F\in \mathcal{MF}$.
Suppose that $q_{0}=q_{F,x_{0}}$ is generic.
Then,
$$
2\left|
(\ext_{\cdot}(F))_{*}[v]
\right|^{2}
\le 
\ext_{x_{0}}(F)\cdot \levi{\ext_{\cdot}(F)}[v,\overline{v}]
$$
holds.
\end{corollary}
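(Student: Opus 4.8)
The plan is to combine the Levi-form formula from Theorem \ref{thm:leviform} with the first-derivative formula from Lemma \ref{lem:transversal}, and then apply the Cauchy--Schwarz inequality for the positive-definite Hermitian inner product
$$
(\varphi,\psi)\mapsto \int_{M_{0}}\frac{\overline{\varphi}\,\psi}{|q_{0}|}
$$
on $\quaddiff{x_{0}}$. First I would record that, by Theorem \ref{thm:leviform}, the extremal length function $\ext_{\cdot}(F)$ is real analytic near $x_{0}$ (since $q_{0}=q_{F,x_{0}}$ is generic) and
$$
\levi{\ext_{\cdot}(F)}[v,\overline{v}]=2\int_{M_{0}}\frac{|\eta_{v}|^{2}}{|q_{0}|}
=2(\eta_{v},\eta_{v}),
$$
where $\eta_{v}\in\quaddiff{x_{0}}$ is the differential characterized by $\langle v,\psi\rangle=\int_{M_{0}}\dot{\mu}\psi=\int_{M_{0}}(\overline{\eta_{v}}/|q_{0}|)\psi$ for all $\psi$.

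Next I would identify the holomorphic differential of $\ext_{\cdot}(F)$ at $x_{0}$ applied to $v$. Taking a holomorphic disk $\lambda\mapsto q_{\lambda}$ through $q_{0}$ in the slice $\slice(F)$ with $\pi(q_{\lambda})$ holomorphic and derivative $v$ at $\lambda=0$, the bilinear-relation computation
$$
\ext_{x_{\lambda}}(F)=\frac{i}{4}\sum_{k=1}^{\tilde g}\left(\chi_{q_{\lambda}}(\alpha_{k})\overline{\chi_{q_{\lambda}}(\beta_{k})}-\chi_{q_{\lambda}}(\beta_{k})\overline{\chi_{q_{\lambda}}(\alpha_{k})}\right)
$$
used in the proof of Theorem \ref{thm:leviform}, together with the first-derivative formulas of Lemma \ref{lem:transversal}
$$
\frac{\partial\chi_{q_{\lambda}}(c)}{\partial\lambda}\Big|_{\lambda=0}=\int_{c}\overline{\left(\frac{(\pi_{q_{0}})^{*}(\eta_{0})}{\omega_{q_{0}}}\right)},\qquad
\frac{\partial\chi_{q_{\lambda}}(c)}{\partial\overline{\lambda}}\Big|_{\lambda=0}=-\int_{c}\frac{(\pi_{q_{0}})^{*}(\eta_{0})}{\omega_{q_{0}}},
$$
yields, after the same Riemann-bilinear-relations bookkeeping used for the Levi form (now at first order), a formula of the shape
$$
(\ext_{\cdot}(F))_{*}[v]=\frac{\partial}{\partial\lambda}\ext_{x_{\lambda}}(F)\Big|_{\lambda=0}
=c\int_{\tilde M_{q_{0}}}\frac{(\pi_{q_{0}})^{*}(\eta_{0})}{\omega_{q_{0}}}\,\overline{\omega_{q_{0}}}
=c'\int_{M_{0}}\frac{\overline{q_{0}}\,\eta_{v}}{|q_{0}|}
$$
for explicit constants, i.e. up to a unimodular constant $(\ext_{\cdot}(F))_{*}[v]$ equals $(q_{0},\eta_{v})$ (possibly with a harmless real scalar), using $\|q_{0}\|=\ext_{x_{0}}(F)$ and descending the integral through $\pi_{q_{0}}$ as in the proof of Lemma \ref{lem:transversal}. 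I expect the main obstacle to be pinning down this constant and the precise algebraic identity $(\ext_{\cdot}(F))_{*}[v]=(q_{0},\eta_{v})$ cleanly, since it requires repeating the bilinear-relation manipulation at first order and carefully tracking the factor of $2$ coming from the double cover and the factor $\tfrac14$ in the area form.

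Granting that identity, the conclusion is immediate: Cauchy--Schwarz for the inner product $(\cdot,\cdot)$ gives
$$
\left|(\ext_{\cdot}(F))_{*}[v]\right|^{2}=\left|(q_{0},\eta_{v})\right|^{2}\le (q_{0},q_{0})\,(\eta_{v},\eta_{v})
=\ext_{x_{0}}(F)\cdot\int_{M_{0}}\frac{|\eta_{v}|^{2}}{|q_{0}|}
=\ext_{x_{0}}(F)\cdot\tfrac12\levi{\ext_{\cdot}(F)}[v,\overline{v}],
$$
which rearranges to $2\left|(\ext_{\cdot}(F))_{*}[v]\right|^{2}\le \ext_{x_{0}}(F)\cdot\levi{\ext_{\cdot}(F)}[v,\overline{v}]$, as claimed. (Equality holds precisely when $\eta_{v}$ is a scalar multiple of $q_{0}$, which will be relevant for the log-plurisubharmonicity statement of Theorem \ref{thm:plurisuperharmonicitiy-of-reciprocals}, since the inequality says exactly $\levi{\log\ext_{\cdot}(F)}[v,\overline v]\ge 0$.)
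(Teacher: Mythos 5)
Your proposal is correct and follows essentially the same route as the paper: the paper likewise takes $\levi{\ext_{\cdot}(F)}[v,\overline{v}]=2\int_{M_{0}}|\eta_{v}|^{2}/|q_{0}|$ from Theorem \ref{thm:leviform}, identifies $(\ext_{\cdot}(F))_{*}[v]=-\int_{M_{0}}(\overline{\eta_{v}}/|q_{0}|)\,q_{0}$ via Gardiner's formula (which is rederived by exactly your first-order bilinear-relations computation in \S\ref{subsec:Appendix-Gardiner-formula}, where the constant comes out to be $-1$), and concludes by Cauchy--Schwarz together with $\ext_{x_{0}}(F)=\|q_{0}\|$. The only point to tighten is your parenthetical ``possibly with a harmless real scalar'': a real scalar of modulus different from $1$ would not be harmless, since the factor $2$ in the inequality is sharp (equality when $\eta_{v}$ is proportional to $q_{0}$), so the derivative identity must be pinned down exactly up to a unimodular factor, as the appendix computation does.
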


\begin{proof}
It follows from Gardiner's formula 
that
\begin{equation}
\label{eq:Gardiner-formula1}
(\ext_{\cdot}(F))_{*}[v]
=
-\int_{M_{0}}\dot{\mu}\,q_{0}
=-\int_{M_{0}}\frac{\overline{\eta_{v}}}{|q_{0}|}q_{0}.
\end{equation}
See \cite[\S11.8]{MR903027}
(See also \S\ref{subsec:Appendix-Gardiner-formula}).
Notice in comparing the Gardiner's original formula
with \eqref{eq:Gardiner-formula1}
that our differential $q_{0}=q_{F,x_{0}}$ has $F$ as the vertical foliation,
while Gardiner considers
the quadratic differential with horizontal foliation $F$.
Hence,
we have to put the minus sign in \eqref{eq:Gardiner-formula1}.
From
Cauchy-Schwarz inequality,
one can see from Theorem \ref{thm:leviform}
that
\begin{align*}
&
\ext_{x_{0}}(F)\levi{\ext_{\cdot}(F)}[v,\overline{v}]
-2\left|
(\ext_{\cdot}(F))_{*}[v]
\right|^{2}
\\
&=
\ext_{x_{0}}(F)\cdot 2
\int_{M_{0}}
\frac{|\eta_{v}|^{2}}{|q_{0}|}
-2\left|
-\int_{M_{0}}\frac{\overline{\eta_{v}}}{|q_{0}|}q_{0}
\right|^{2} \\
&=
2\left(
\int_{M_{0}}
|q_{0}|
\cdot 
\int_{M_{0}}
\frac{|\eta_{v}|^{2}}{|q_{0}|}
-\left|
\int_{M_{0}}\frac{\overline{\eta_{v}}}{|q_{0}|^{1/2}}\frac{q_{0}}{|q_{0}|^{1/2}}
\right|^{2}
\right)
\ge 0,
\end{align*}
which implies what we wanted.
\end{proof}

\subsection{Plurisubharmonicity for subspecies}
\label{subsec:log-plurisubharmonicity}
Let $N$ be a complex manifold.
A function $U$ on $N$ is said to be \emph{plurisubharmonic} if 
$U$ is upper semi-continuous,
and $U\circ f$ is subharmonic for all holomorphic mapping
$f\colon \mathbb{D}\to N$.
A $C^{2}$-function on $N$ is plurisubharmonic
if the Levi form is positive semi-definite.
We call a $C^{2}$-function \emph{strictly plurisubharmonic}
if the Levi form is positive-definite.
When a function $U$ is plurisubharmonic,
so is $g\circ U$
for any increasing convex function $g$ on $\mathbb{R}$
such that the limit $\lim_{t\to -\infty}g(t)$ exists.

In the proof of the following theorem,
we remind that if $U(z)$ is a real-valued positive $C^{2}$-function on a domain
on $\mathbb{C}$,
$\log U$ and $-1/U$
satisfy
\begin{align}
(\log U)_{\lambda\overline{\lambda}}
&=\frac{U\cdot U_{\lambda\overline{\lambda}}-|U_{\lambda}|^{2}}{U^{2}}
\label{eq:log-laplacian}
\\
\left(-\frac{1}{U}\right)_{\lambda\overline{\lambda}}
&=\frac{U\cdot U_{\lambda\overline{\lambda}}-2|U_{\lambda}|^{2}}{U^{3}}.
\label{eq:reciprocal-laplacian}
\end{align}
Especially,
for a positive $C^{2}$-function $U$ on a complex manifold,
if $-1/U$ is plurisubharminic,
$\log U$ and $U$ are also plurisubharmonic.
From \eqref{eq:log-laplacian} and \eqref{eq:reciprocal-laplacian},
Theorem \ref{thm:pluri-subharmonicity} follows from
the following theorem.

\begin{theorem}[Plurisuperharmonicity of reciprocals]
\label{thm:plurisuperharmonicitiy-of-reciprocals}
Let $F_{1},F_{2},\cdots,F_{n}$ be
measured foliations.
Let $a_{k},c\ge0$ $(k=1,\cdots,n)$ with $\sum_{k=1}^{n}a_{k}>0$.
If $n\ge 2$,
$a_{k},c>0$
$(k=1,\cdots,n)$
and
 some two of them are transverse
 in the sense of
\eqref{eq:transverse-mf},
the function
$$
\rho(x)=-\frac{1}{c+\sum_{k=1}^{n}a_{k}\ext_{x}(F_{k})}
$$
is a negative,
plurisubharmonic exhaustion function
of $\teich{g,m}$
with lower bound.
When $n\ge 1$ or $c\ge 0$,
$\rho$ is a negative, 
plurisubharmonic function
on $\teich{g,m}$
with lower bound.
\end{theorem}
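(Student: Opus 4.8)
The plan is to derive everything from the infinitesimal results already proved, Theorem~\ref{thm:leviform} and Corollary~\ref{coro:strong-positivity}, together with the elementary identity~\eqref{eq:reciprocal-laplacian}. Write $U=c+\sum_{k=1}^{n}a_{k}\ext_{\cdot}(F_{k})$, so that $\rho=-1/U$. Since $U>0$ on $\teich{g,m}$, $\rho$ is automatically negative and continuous (hence upper semicontinuous). If $c>0$ then $\rho\ge -1/c$; if $c=0$ but two of the foliations, say $F_{j}$ and $F_{l}$, are transverse in the sense of~\eqref{eq:transverse-mf}, then the estimate in \S\ref{Kerckhoff-Minsky} (Minsky's inequality together with the properness of $\ext_{x_{0}}$ and the compactness of $\{\ext_{x_{0}}(H)=1\}$) yields a uniform positive lower bound for $\ext_{x}(F_{j})+\ext_{x}(F_{l})$, hence for $U$, so $\rho$ again has a finite lower bound; and since $x\mapsto\ext_{x}(F_{j})+\ext_{x}(F_{l})$ is proper (same section), $\{\rho<-\epsilon\}\subset\{\ext_{x}(F_{j})+\ext_{x}(F_{l})<(1/\epsilon-c)/\min(a_{j},a_{l})\}$ is relatively compact, which gives the exhaustion statement. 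Thus the only substantive point is plurisubharmonicity --- and, for the application~\eqref{eq:extremal-length-exhausion}, strict plurisubharmonicity and real analyticity.

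First I would reduce to the case in which every $F_{k}$ is essentially complete. By the Hubbard--Masur homeomorphism $F\mapsto q_{F,x_{0}}$ and the density of generic differentials in $\quaddiff{g,m}$, the set of $F$ with $q_{F,x_{0}}$ generic is open and dense in $\mathcal{MF}$; since a simple zero (resp.\ a simple pole at a puncture) of $q_{F,x_{0}}$ produces a three-prong (resp.\ one-prong) singularity of $F$, every such $F$ is essentially complete, and then $q_{F,x}$ is generic for all $x\in\teich{g,m}$. Dropping the terms with $a_{k}=0$ or $F_{k}=0$ and approximating each remaining $F_{k}$ by essentially complete $F_{k}^{(j)}\to F_{k}$, the continuity of $(x,F)\mapsto\ext_{x}(F)$ together with the quasiconformal distortion property (which makes these functions equicontinuous on compacta uniformly in $F$) gives $\ext_{x}(F_{k}^{(j)})\to\ext_{x}(F_{k})$ uniformly on compact subsets of $\teich{g,m}$, the denominators staying uniformly bounded below on compacta; hence the associated functions $\rho^{(j)}$ converge to $\rho$ locally uniformly. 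As plurisubharmonicity passes to locally uniform limits, it suffices to treat the case of essentially complete $F_{k}$, in which Theorem~\ref{thm:leviform} moreover shows that each $\ext_{\cdot}(F_{k})$, hence $U$ and $\rho$, is real analytic.

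So assume every $F_{k}$ is essentially complete and fix a holomorphic map $f\colon\mathbb{D}\to\teich{g,m}$, with $v=f_{*}(\partial/\partial\lambda|_{0})$. Put $E_{k}=\ext_{f(\lambda)}(F_{k})$; at each point of $\mathbb{D}$ the differential $q_{F_{k},f(\lambda)}$ is generic, so Corollary~\ref{coro:strong-positivity} gives $2|(E_{k})_{\lambda}|^{2}\le E_{k}(E_{k})_{\lambda\overline{\lambda}}$ (in particular $(E_{k})_{\lambda\overline{\lambda}}\ge0$). By~\eqref{eq:reciprocal-laplacian} it suffices to show $U\,U_{\lambda\overline{\lambda}}\ge 2|U_{\lambda}|^{2}$ on $\mathbb{D}$. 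Writing $U\,U_{\lambda\overline{\lambda}}=c\sum_{k}a_{k}(E_{k})_{\lambda\overline{\lambda}}+\sum_{k,l}a_{k}a_{l}E_{k}(E_{l})_{\lambda\overline{\lambda}}$ and symmetrizing the double sum, one has for each pair, by the arithmetic--geometric mean inequality and then Corollary~\ref{coro:strong-positivity},
$$
E_{k}(E_{l})_{\lambda\overline{\lambda}}+E_{l}(E_{k})_{\lambda\overline{\lambda}}\ \ge\ 2\sqrt{E_{k}(E_{k})_{\lambda\overline{\lambda}}\,E_{l}(E_{l})_{\lambda\overline{\lambda}}}\ \ge\ 4\,|(E_{k})_{\lambda}|\,|(E_{l})_{\lambda}|,
$$
whence $\sum_{k,l}a_{k}a_{l}E_{k}(E_{l})_{\lambda\overline{\lambda}}\ge 2\bigl(\sum_{k}a_{k}|(E_{k})_{\lambda}|\bigr)^{2}\ge 2\bigl|\sum_{k}a_{k}(E_{k})_{\lambda}\bigr|^{2}=2|U_{\lambda}|^{2}$; adding the nonnegative term $c\sum_{k}a_{k}(E_{k})_{\lambda\overline{\lambda}}$ gives the desired inequality, so $\rho$ is plurisubharmonic. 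If in addition $c>0$ and $v\neq0$, Theorem~\ref{thm:leviform} shows $(E_{k})_{\lambda\overline{\lambda}}=\levi{\ext_{\cdot}(F_{k})}[v,\overline{v}]=2\int_{M}|\eta_{v}^{(k)}|^{2}/|q_{F_{k},x}|>0$ for every $k$, so $c\sum_{k}a_{k}(E_{k})_{\lambda\overline{\lambda}}>0$ and $\rho$ is strictly plurisubharmonic; with $c=1$ this is exactly the situation of~\eqref{eq:extremal-length-exhausion}.

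The step I expect to be the main obstacle is the reduction to essentially complete foliations: one must ensure both that such foliations are dense in $\mathcal{MF}$ and that the approximation keeps the extremal length functions and the denominators uniformly controlled on compact sets, so that the ``limit of plurisubharmonic functions'' argument genuinely applies; once this is in place, the pointwise inequality for $-1/U$ is only a bookkeeping exercise with the arithmetic--geometric mean inequality. One can also prove strict plurisubharmonicity in the transverse case with $c=0$, but this requires an additional inspection of the equality case of the Cauchy--Schwarz inequality used in Corollary~\ref{coro:strong-positivity}, and it is not needed for the corollaries.
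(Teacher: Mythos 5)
Your proposal is correct and follows essentially the same route as the paper: reduce by approximation to essentially complete foliations, apply the identity \eqref{eq:reciprocal-laplacian}, and control the cross terms of $U\,U_{\lambda\overline{\lambda}}-2|U_{\lambda}|^{2}$ pairwise via Corollary \ref{coro:strong-positivity} and the arithmetic--geometric mean inequality, with strictness for $c>0$ coming from the term $c\sum_{k}a_{k}(E_{k})_{\lambda\overline{\lambda}}$ and the exhaustion from Minsky's inequality. The only differences are cosmetic: you carry out general $n$ directly where the paper writes out $n=2$, and you supply more detail on the approximation and properness steps that the paper delegates to references.
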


\begin{proof}
We only show the case $n=2$.
The other case can be treated in the same way.
Furthermore,
we may also assume that each $F_{k}$ are essentially complete,
and $F_{1}$ and $F_{2}$ are transverse
in the sense of
\eqref{eq:transverse-mf}.
The general case follows from a standard approximating procedure
(cf. \cite[Theorem 2.6.1]{MR1150978},
\cite[Theorem 2 in Chapter II]{MR0243112} or \cite[3.3 in Chapter II]{MR0344979}).
Under the assumption,
each extremal length function
$\teich{g,m}\ni x\mapsto \ext_{x}(F_{k})$ is 
real analytic.

Let $x=(M,f)\in \teich{g,m}$ and $v\in T_{x_{0}}\teich{g,m}$.
Let $h\colon \mathbb{D}\to \teich{g,m}$
be a holomorphic mapping with $h(0)=x_{0}$
and 
$h_{*}((\partial/\partial\lambda)\mid_{\lambda=0})=v$.
%
Set $E^{1}(\lambda)=\ext_{h(\lambda)}(F_{1})$,
$E^{2}(\lambda)=\ext_{h(\lambda)}(F_{2})$
and $E=a_{1}E^{1}+a_{2}E^{2}$ for simplicity.

Then,
the Levi form of $\rho$ is 
\begin{align*}
\levi{\rho}[v,\overline{v}]
&=
\frac{
c\left.E_{\lambda\overline{\lambda}}\right|_{\lambda=0}
}
{(c+E(0))^{3}}
+
\frac{
E(0)\left.E_{\lambda\overline{\lambda}}\right|_{\lambda=0}
-
2|\left.E_{\lambda}\right|_{\lambda=0}|^{2}
}
{(c+E(0))^{3}}.
\end{align*}
Therefore,
we conclude 
from Corollary \ref{coro:strong-positivity}
that
\begin{align*}
&E(0)\left.E_{\lambda\overline{\lambda}}\right|_{\lambda=0}
-
2|\left.E_{\lambda}\right|_{\lambda=0}|^{2} \\
&=
a_{1}^{2}\left(
E^{1}(0)\left.E^{1}_{\lambda\overline{\lambda}}\right|_{\lambda=0}
-
2|\left.E^{1}_{\lambda}\right|_{\lambda=0}|^{2}
\right)
+a_{2}^{2}\left(
E^{2}(0)\left.E^{2}_{\lambda\overline{\lambda}}\right|_{\lambda=0}
-
2|\left.E^{2}_{\lambda}\right|_{\lambda=0}|^{2}
\right) \\
&\quad+
a_{1}a_{2}\left(
E^{1}(0)\left.E^{2}_{\lambda\overline{\lambda}}\right|_{\lambda=0}+
E^{2}(0)\left.E^{1}_{\lambda\overline{\lambda}}\right|_{\lambda=0}
-
4{\rm Re}\left(
\overline{\left.E^{1}_{\lambda}\right|_{\lambda=0}}
\left.E^{2}_{\lambda}\right|_{\lambda=0}
\right)
\right) \\
&\ge
a_{1}a_{2}\left(
E^{1}(0)\left.E^{2}_{\lambda\overline{\lambda}}\right|_{\lambda=0}+
E^{2}(0)\left.E^{1}_{\lambda\overline{\lambda}}\right|_{\lambda=0}
-
4
\left|
\left.E^{1}_{\lambda}\right|_{\lambda=0}
\right|
\left|
\left.E^{2}_{\lambda}\right|_{\lambda=0}
\right|
\right) \\
&\ge
a_{1}a_{2}\left(
E^{1}(0)\left.E^{2}_{\lambda\overline{\lambda}}\right|_{\lambda=0}+
E^{2}(0)\left.E^{1}_{\lambda\overline{\lambda}}\right|_{\lambda=0}
\right.
\\
&
\left.\qquad-
2
E^{1}(0)^{1/2}
\left(
\left.E^{2}_{\lambda\overline{\lambda}}\right|_{\lambda=0}
\right)^{1/2}
E^{1}(0)^{1/2}
\left(\left.E^{2}_{\lambda\overline{\lambda}}\right|_{\lambda=0}
\right)^{1/2}
\right) \\
&=
a_{1}a_{2}\left(
E^{1}(0)^{1/2}
\left(\left.E^{2}_{\lambda\overline{\lambda}}\right|_{\lambda=0}
\right)^{1/2}
-
E^{1}(0)^{1/2}
\left(
\left.E^{2}_{\lambda\overline{\lambda}}\right|_{\lambda=0}
\right)^{1/2}
\right)^{2}
\ge 0
\end{align*}
and hence
$$
\levi{\rho}[v,\overline{v}]
\ge
\frac{
c\left.E_{\lambda\overline{\lambda}}\right|_{\lambda=0}
}
{(c+E(0))^{3}}
=
c\frac{
a_{1}^{2}\levi{\ext_{\cdot}(F_{1})}[v,\overline{v}]+a_{2}^{2}\levi{\ext_{\cdot}(F_{2})}[v,\overline{v}]
}
{(c+a_{1}\ext_{x_{0}}(F_{1})+a_{2}\ext_{x_{0}}(F_{2}))^{3}}
$$
when 
$a+b>0$, $c>0$.
Indeed,
since $q_{F,x}$ is generic,
the anti-complex linear map
$$
T_{x_{0}}\teich{g,m}\ni v\mapsto \eta_{v}\in \quaddiff{x_{0}}
$$
is an isomorphism.
Therefore,
$\rho$ is (real analytic) strictly plurisubharmonic on $\teich{g,m}$.
From the definition,
$\rho$ satisfies that
$\rho(x)<0$ for $x\in \teich{g,m}$.
Since $F_{1}$ and $F_{2}$ are transverse,
for any $\epsilon>0$,
there is a compact set $K\subset \teich{g,m}$
such that $\rho>-\epsilon$ for all $x\in \teich{g,m}-K$
(cf. \S\ref{Kerckhoff-Minsky}).
Since $\rho(x)>-1/a$ for all $x\in \teich{g,m}$,
the function $\rho(x)$
satisfies the desired condition.
\end{proof}

We also obtain the following comparizon.


\begin{theorem}[Gradients and Levi forms for log-extremal length functions]
\label{thm:strong-positivity2-2}
For any measured foliation
$F$,
\begin{align*}
d\log \ext_{\cdot}(F)
\wedge
d^{c}\log \ext_{\cdot}(F)
&\le 
\frac{1}{2\ext_{\cdot}(F)}
dd^{c}
\,
\ext_{\cdot}(F)
\le 
dd^{c}\log\ext_{\cdot}(F)
\end{align*}
holds
in the sense of currents,
where $d=\partial+\overline{\partial}$
and $d^{c}=i(\overline{\partial}-\partial)$.
Especially,
the log-extremal length function
$\log\ext_{\cdot}(F)$ is a real analytic strictly plurisubharmonic function on $\teich{g,m}$,
when $F$ is essentially complete.
\end{theorem}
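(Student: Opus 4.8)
The plan is to reduce the whole double inequality to the single current estimate
\[
d\log\ext_{\cdot}(F)\wedge d^{c}\log\ext_{\cdot}(F)\ \le\ dd^{c}\log\ext_{\cdot}(F),
\]
and then to prove the latter --- first where the Hubbard--Masur differential is everywhere generic, and afterwards for an arbitrary $F$ by approximation. For the reduction, write $U=\ext_{\cdot}(F)$; we may assume $F\neq0$, so that $U$ is a positive continuous plurisubharmonic function on $\teich{g,m}$ by Theorem \ref{thm:pluri-subharmonicity}, and so is $\log U$. The Bedford--Taylor chain rule $dd^{c}(g\circ u)=g'(u)\,dd^{c}u+g''(u)\,du\wedge d^{c}u$ (cf.\ \cite{MR1150978}), applied with $u=\log U$ and $g=\exp$, gives the identity of currents
\[
\frac{1}{2U}\,dd^{c}U=\frac12\Bigl(d\log U\wedge d^{c}\log U+dd^{c}\log U\Bigr).
\]
Thus $\tfrac{1}{2U}dd^{c}U$ is the midpoint of $d\log U\wedge d^{c}\log U$ and $dd^{c}\log U$, so the asserted chain of inequalities holds if and only if the single estimate above does.

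Suppose next that $q_{F,x}$ is generic for every $x\in\teich{g,m}$; this is so in particular when $F$ is essentially complete. Then $\ext_{\cdot}(F)$ is real analytic on $\teich{g,m}$ by Theorem \ref{thm:leviform}, and it suffices to check the single estimate pointwise along holomorphic disks $h\colon\mathbb{D}\to\teich{g,m}$. Writing $U=\ext_{h(\,\cdot\,)}(F)$ and $u=\log U$, and using $u_{\lambda}=U_{\lambda}/U$ and $u_{\lambda\overline{\lambda}}=(U\,U_{\lambda\overline{\lambda}}-|U_{\lambda}|^{2})/U^{2}$, one finds that $u_{\lambda\overline{\lambda}}\ge|u_{\lambda}|^{2}$ at $0$ (equivalently $d\log\ext_{\cdot}(F)\wedge d^{c}\log\ext_{\cdot}(F)\le dd^{c}\log\ext_{\cdot}(F)$ along $h$) is exactly
\[
2\,|U_{\lambda}(0)|^{2}\ \le\ U(0)\,U_{\lambda\overline{\lambda}}(0),
\]
which is Corollary \ref{coro:strong-positivity} along $h$. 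This proves the theorem for such $F$. For the last assertion, when $F$ is essentially complete the anti-complex-linear map $v\mapsto\eta_{v}$ is an isomorphism, as observed in the proof of Theorem \ref{thm:plurisuperharmonicitiy-of-reciprocals}; hence, by Theorem \ref{thm:leviform}, $U_{\lambda\overline{\lambda}}(0)=\levi{\ext_{\cdot}(F)}[v,\overline{v}]=2\int_{M}|\eta_{v}|^{2}/|q_{0}|>0$ for $v\neq0$, and together with Corollary \ref{coro:strong-positivity} this gives $\levi{\log\ext_{\cdot}(F)}[v,\overline{v}]=\bigl(U(0)\,U_{\lambda\overline{\lambda}}(0)-|U_{\lambda}(0)|^{2}\bigr)/U(0)^{2}\ge U_{\lambda\overline{\lambda}}(0)/\bigl(2U(0)\bigr)>0$, i.e.\ $\log\ext_{\cdot}(F)$ is strictly plurisubharmonic; its real analyticity is inherited from that of $\ext_{\cdot}(F)$ and its positivity.

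For a general $F\in\mathcal{MF}-\{0\}$, I would take essentially complete measured foliations $F_{j}\to F$; these are dense in $\mathcal{MF}$ (realize a given measured lamination by a positively weighted maximal train track; cf.\ \cite{Thuston-LectureNote}, \cite{MR1144770}). Since $(x,G)\mapsto\ext_{x}(G)$ is continuous and $\ext_{\cdot}(F)$ is bounded away from $0$ on compacta, $\ext_{\cdot}(F_{j})\to\ext_{\cdot}(F)$ and hence $\log\ext_{\cdot}(F_{j})\to\log\ext_{\cdot}(F)$ locally uniformly on $\teich{g,m}$. Each $\log\ext_{\cdot}(F_{j})$ satisfies the single estimate by the previous paragraph; as $j\to\infty$ the currents $dd^{c}\log\ext_{\cdot}(F_{j})$ converge weakly since $dd^{c}$ is continuous on $L^{1}_{\mathrm{loc}}$, while the positive currents $d\log\ext_{\cdot}(F_{j})\wedge d^{c}\log\ext_{\cdot}(F_{j})$ converge weakly by the Bedford--Taylor continuity of $w\mapsto dw\wedge d^{c}w$ along locally uniformly convergent sequences of locally bounded plurisubharmonic functions. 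This yields the single estimate for $F$, and then the full double inequality via the identity of the first paragraph.

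The step I expect to be the main obstacle is this last passage to the limit: the wedge product of currents is not weakly continuous in general, so one genuinely needs Bedford--Taylor theory here --- or, equivalently, the decomposition $dw\wedge d^{c}w=\tfrac12 dd^{c}(w^{2})-w\,dd^{c}w$ combined with weak continuity of $dd^{c}$ under locally uniform convergence and with the fact that multiplying a weakly convergent sequence of positive measures by a uniformly convergent sequence of continuous functions is weakly continuous. A secondary technical point to be careful about is the density of essentially complete measured foliations invoked in the approximation.
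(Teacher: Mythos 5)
Your proof is correct and follows essentially the same route as the paper: reduce to essentially complete $F$ by approximation, then observe that both inequalities collapse to the single pointwise estimate $2|U_{\lambda}|^{2}\le U\,U_{\lambda\overline{\lambda}}$ of Corollary \ref{coro:strong-positivity} via \eqref{eq:log-laplacian}, with strict positivity coming from the isomorphism $v\mapsto\eta_{v}$ in Theorem \ref{thm:leviform}. You merely make explicit two points the paper leaves implicit — that $\tfrac{1}{2U}dd^{c}U$ is exactly the midpoint of the two outer currents, and that the limit passage for $d\log\ext_{\cdot}(F_{j})\wedge d^{c}\log\ext_{\cdot}(F_{j})$ needs Bedford--Taylor continuity rather than naive weak convergence — both of which are correct and worth recording.
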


\begin{proof}
By approximation,
we may assume that $F$ is essentially complete.
Then,
the inequalities follow
from the combinations of \eqref{eq:log-laplacian}
and
Corollary \ref{coro:strong-positivity}.
\end{proof}

\section{Gemetric interpretation of $\eta_{v}$}
\label{sec:geometric-interpretation}
\subsection{Motivation}
Let $x_{0}=(M_{0},f_{0})\in \teich{g,m}$.
Suppose $q_{0}=q_{F,x_{0}}$ is generic.
Let $v=[\dot{\mu}]\in T_{x_{0}}\teich{g,m}$.
We defined the quadratic differential $\eta_{v}\in \quaddiff{g,m}$
to satisfy
\begin{equation}
\label{eq:pairing-and-projection}
\langle v,\psi\rangle=\int_{M_{0}}\frac{\overline{\eta_{v}}}{|q_{0}|}\psi
\end{equation}
for all $\psi\in \quaddiff{g,m}$
in Lemma \ref{lem:transversal} and Theorem \ref{thm:leviform}.
The definition of $\eta_{v}$
implies
as if the infinitesimal Beltrami differential $\dot{\mu}$
were infinitesimally equivalent to the ``infinitesimal deformation'' along
$\overline{\eta_{v}}/|q_{0}|$
on $M_{q_{0}}$
(cf. \cite{MR1215481}).
However,
$\overline{\eta_{v}}/|q_{0}|$
is not a Beltrami differential in general,
becauce it may have $1/|z|$-singularities at zeroes of $q_{0}$
and hence it could happen $\|\overline{\eta_{v}}/|q_{0}|\|_{\infty}=\infty$.
Therefore,
the above discussion does not interpret correctly
the geometry of the anti-complex linear map
\begin{equation}
\label{eq:corresponding-v-eta}
T_{x_{0}}\teich{g,m}\ni v\mapsto \eta_{v}\in \quaddiff{x_{0}}
\end{equation}
 in general.
In this section,
we shall try to give a geometric description of the correspondence
\eqref{eq:corresponding-v-eta}.

\subsection{Geometric interpretation}
From \cite[Theorem 5.8]{2012arXiv1203.0273D} and \cite{MR1099913},
the map 
\begin{equation}
\label{eq:Gardiner-Masur}
\quaddiff{g,m}\ni q\mapsto (\horizontal(q),\vertical(q))\in \mathcal{MF}\times \mathcal{MF}
\end{equation}
is a real analytic diffeomorphism
around $q_{0}$.
%
Hence, the map
\begin{equation}
\label{eq:Gardiner-Masur2}
\complexstructure\colon
\teich{g,m}\times \teich{g,m}
\ni (x,y)\mapsto -q_{\horizontal(q_{F,x}),y}\in 
\quaddiff{g,m}
\end{equation}
is also a real analytic diffeomorphism on a neighborhood $N_{1}\times N_{1}$
of $(x_{0},x_{0})$
onto the image,
where $N_{1}$ is a neighborhood of $x_{0}=\pi(q_{0})\in \teich{g,m}$.
Notice that 
$\horizontal(\complexstructure(x,y))=\horizontal(q_{F,x})$,
$\complexstructure(x,y)\in \quaddiff{y}$
and $\complexstructure(x,x)=q_{F,x}$ for all $x,y\in N_{1}$.
Let $\complexstructure_{y}(x)=\complexstructure(x,y)$ for $(x,y)\in N_{1}\times N_{1}$
(Figure \ref{fig:mapJ}).
\begin{figure}
\includegraphics[height=6cm]{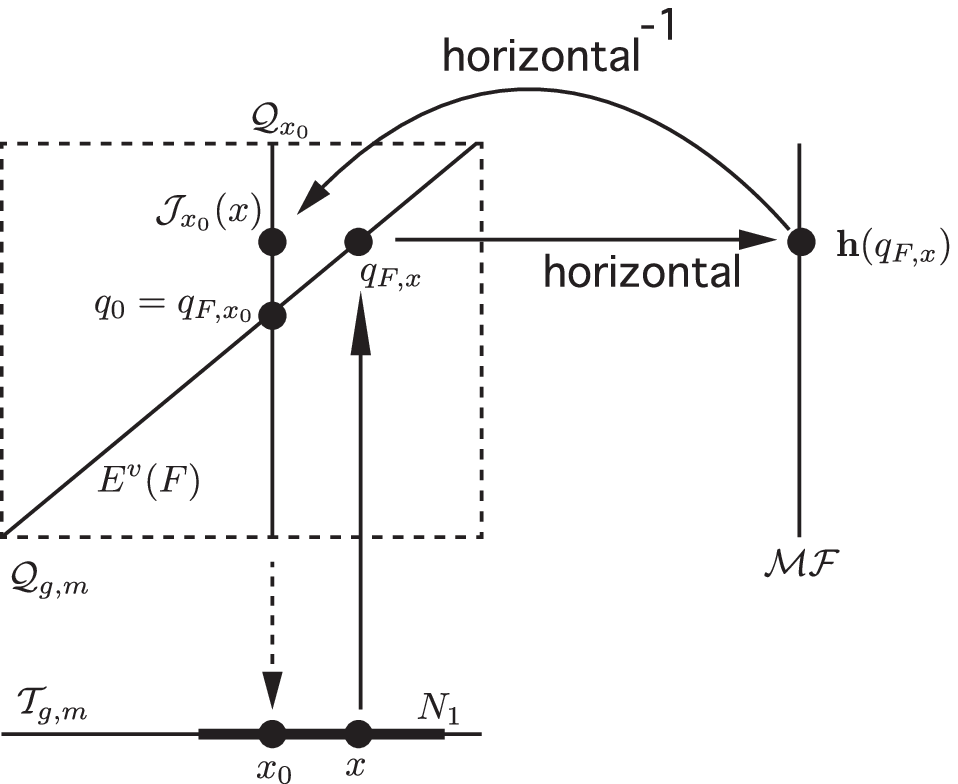}
\caption{A schematic of the map $\mathcal{J}_{x_{0}}$.}
\label{fig:mapJ}
\end{figure}


\begin{theorem}[Geometric interpretation]
\label{thm:duality}
For $v\in T_{x_{0}}\teich{g,m}$,
we have
$$
(\complexstructure_{x_{0}})_{*}[v]=-4\eta_{v},
$$
where $\eta_{v}\in \quaddiff{x_{0}}$ is taken
as \eqref{eq:pairing-and-projection}.
\end{theorem}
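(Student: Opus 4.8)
The plan is to pass to the Douady--Hubbard coordinate $\douadyhubbard$ of \eqref{eq:Douady-Hubbard} around $q_{0}=q_{F,x_{0}}$. By Lemma \ref{lem:douady-hubbard-local-coordinate} this is a holomorphic local chart, so $\douadyhubbard_{*}$ is a $\mathbb{C}$-linear isomorphism on $T_{q_{0}}\quaddiff{g,m}$, and two tangent vectors at $q_{0}$ coincide as soon as their images under $\douadyhubbard_{*}$ --- i.e. their period homomorphisms on $H_{1}(\tilde{M}_{q_{0}})^{-}$ --- coincide. Fix a holomorphic disk $h\colon\mathbb{D}\to\teich{g,m}$ with $h(0)=x_{0}$ and $h_{*}(\partial/\partial\lambda|_{\lambda=0})=v$, and read $(\complexstructure_{x_{0}})_{*}[v]$ as the anti-holomorphic derivative $\left.\frac{\partial}{\partial\overline{\lambda}}\right|_{\lambda=0}(\complexstructure_{x_{0}}\circ h)$; this is the natural reading in view of the anti-complex linearity of $v\mapsto\eta_{v}$ and of the use of $\partial/\partial\overline{\lambda}$ in Lemma \ref{lem:transversal}, and in fact the computation below shows $\complexstructure_{x_{0}}$ to be anti-holomorphic, so it is the full derivative. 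Since $\complexstructure_{x_{0}}(x)=\complexstructure(x,x_{0})\in\quaddiff{x_{0}}$, this derivative is a \emph{vertical} tangent vector at $q_{0}$. A direct computation --- differentiate $\omega_{q_{0}+t\zeta}^{2}=(\pi_{q_{0}})^{*}(q_{0}+t\zeta)$ at $t=0$ --- shows that a vertical vector $\zeta\in\quaddiff{x_{0}}$ at $q_{0}$ has $\douadyhubbard_{*}$-image the homomorphism $c\mapsto\frac{1}{2}\int_{c}(\pi_{q_{0}})^{*}(\zeta)/\omega_{q_{0}}$; in particular $-4\eta_{v}$ corresponds to $c\mapsto-2\int_{c}(\pi_{q_{0}})^{*}(\eta_{v})/\omega_{q_{0}}$. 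It therefore suffices to show that $\douadyhubbard_{*}[(\complexstructure_{x_{0}})_{*}[v]]$ is this same homomorphism.

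The key input is the identity $\complexstructure(x,x)=q_{F,x}$. Regarding $\complexstructure$ as a real analytic function of its two arguments and applying the chain rule to the $\overline{\lambda}$-derivative of $\lambda\mapsto\complexstructure(h(\lambda),h(\lambda))$ --- using that $h$ is holomorphic --- one obtains
\[
\left.\frac{\partial}{\partial\overline{\lambda}}\right|_{\lambda=0}q_{F,h(\lambda)}=(\complexstructure_{x_{0}})_{*}[v]+\left.\frac{\partial}{\partial\overline{\lambda}}\right|_{\lambda=0}\complexstructure(x_{0},h(\lambda)).
\]
Thus $(\complexstructure_{x_{0}})_{*}[v]$ is the difference of the anti-holomorphic derivatives at $\lambda=0$ of the Hubbard--Masur section $\lambda\mapsto q_{F,h(\lambda)}$ and of the family $\lambda\mapsto\complexstructure(x_{0},h(\lambda))=-q_{\horizontal(q_{0}),h(\lambda)}$ (recall $\horizontal(q_{0})=\vertical(-q_{0})$ and $q_{\horizontal(q_{0}),x_{0}}=-q_{0}$). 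Both of these are Hubbard--Masur sections for a fixed vertical foliation ($F$, respectively $\vertical(-q_{0})$) with holomorphically varying projection, so Lemma \ref{lem:transversal} applies to each; note that $-q_{0}$ is again generic, so $x\mapsto q_{\horizontal(q_{0}),x}$ is real analytic near $x_{0}$ by Lemma \ref{lem:real-analytic-section}.

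Now I would apply Lemma \ref{lem:transversal} to each family. For $\lambda\mapsto q_{F,h(\lambda)}$ at the base point $q_{0}$, the quadratic differential furnished by the lemma is exactly the $\eta_{v}$ of \eqref{eq:pairing-and-projection}, so $\left.\frac{\partial}{\partial\overline{\lambda}}\right|_{\lambda=0}\chi_{q_{F,h(\lambda)}}(c)=-\int_{c}(\pi_{q_{0}})^{*}(\eta_{v})/\omega_{q_{0}}$. For $\lambda\mapsto q_{\horizontal(q_{0}),h(\lambda)}$ at the base point $-q_{0}$, carried out on the covering $\tilde{M}_{-q_{0}}=\tilde{M}_{q_{0}}$, the crucial observation is that the pairing defining the associated differential is $\langle v,\psi\rangle=\int_{M_{0}}(\overline{\eta}\,\psi)/|{-q_{0}}|=\int_{M_{0}}(\overline{\eta}\,\psi)/|q_{0}|$ --- literally the pairing of \eqref{eq:pairing-and-projection} --- so the lemma again returns $\eta_{v}$. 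Transporting from the Douady--Hubbard chart about $-q_{0}$ to the one about $q_{0}$ via $\omega_{-q}^{2}=-\omega_{q}^{2}$ (so $\omega_{-q}=\pm i\,\omega_{q}$, $\chi_{-q}=\pm i\,\chi_{q}$, and correspondingly $\omega_{-q_{0}}=\mp i\,\omega_{q_{0}}$), the two sign ambiguities cancel and one gets $\left.\frac{\partial}{\partial\overline{\lambda}}\right|_{\lambda=0}\chi_{\complexstructure(x_{0},h(\lambda))}(c)=+\int_{c}(\pi_{q_{0}})^{*}(\eta_{v})/\omega_{q_{0}}$. Subtracting, $\douadyhubbard_{*}[(\complexstructure_{x_{0}})_{*}[v]]$ is the homomorphism $c\mapsto-2\int_{c}(\pi_{q_{0}})^{*}(\eta_{v})/\omega_{q_{0}}$, which is precisely $\douadyhubbard_{*}[-4\eta_{v}]$; by injectivity of $\douadyhubbard_{*}$ this gives $(\complexstructure_{x_{0}})_{*}[v]=-4\eta_{v}$. (Running the identical computation with $\partial/\partial\lambda$ in place of $\partial/\partial\overline{\lambda}$ yields $0$, which confirms that $\complexstructure_{x_{0}}$ is anti-holomorphic.)

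The main obstacle is entirely in the bookkeeping of constants and signs: the factor $\frac{1}{2}$ relating a quadratic-differential tangent vector to its period $1$-form (coming from $\omega^{2}=(\pi)^{*}q$), the $\pm i$ tying together $\omega_{q}$, $\omega_{-q}$ and the Douady--Hubbard charts around $q_{0}$ and $-q_{0}$, and the verification that Lemma \ref{lem:transversal} returns $\eta_{v}$ in both applications --- which works exactly because that differential is pinned down by an integral seeing only $|q_{0}|=|{-q_{0}}|$. Beyond that, the proof is a direct assembly of Lemmas \ref{lem:transversal}, \ref{lem:douady-hubbard-local-coordinate} and \ref{lem:real-analytic-section}.
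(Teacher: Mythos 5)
Your proposal is correct, but it takes a genuinely different route from the paper. The paper never differentiates the diagonal identity $\complexstructure(x,x)=q_{F,x}$ in the period chart; instead it transfers everything to measured-foliation data: it computes the derivative of $x\mapsto\horizontal(q_{F,x})$ as a weight $V_{v}(b)=-2\,{\rm Im}\int_{b}(\pi_{q_{0}})^{*}(\eta_{v})/\omega_{q_{0}}$ on the lifted train track $\tilde{\tau}_{q_{0}}$, uses $\horizontal\circ\complexstructure_{x_{0}}=\horizontal(q_{F,\cdot})$ to get $\horizontal_{*}[(\complexstructure_{x_{0}})_{*}[v]]=p_{*}(V_{v})$, and then pins down $(\complexstructure_{x_{0}})_{*}[v]=-4\eta_{v}$ by testing Dumas's identity \eqref{eq:Dumas1} (Thurston's symplectic form against the Hermitian pairing ${\rm Im}\int\psi_{1}\overline{\psi_{2}}/(4|q_{0}|)$) on all $\psi\in\quaddiff{x_{0}}$. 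Your argument stays entirely inside the Douady--Hubbard chart: the diagonal decomposition, two applications of Lemma \ref{lem:transversal} (based at $q_{0}$ and at $-q_{0}=q_{\horizontal(q_{0}),x_{0}}$, both returning the same $\eta_{v}$ precisely because the defining pairing only sees $|q_{0}|=|{-q_{0}}|$), the period formula $c\mapsto\tfrac12\int_{c}(\pi_{q_{0}})^{*}(\zeta)/\omega_{q_{0}}$ for vertical vectors, and injectivity of $\douadyhubbard_{*}$ from Lemma \ref{lem:douady-hubbard-local-coordinate}. I checked your constants and the $\pm i$ transport between the charts at $q_{0}$ and $-q_{0}$: both sides come out to $c\mapsto-2\int_{c}(\pi_{q_{0}})^{*}(\eta_{v})/\omega_{q_{0}}$, so the conclusion is right. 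What your route buys is independence from the train-track/symplectic machinery behind \eqref{eq:Dumas1}; what the paper's route buys is a direct geometric explanation of the factor $-4$ via Thurston's form. One caution on wording: your computation only shows that the $(1,0)$-part of $d\complexstructure_{x_{0}}$ vanishes \emph{at the point $x_{0}$} (the diagonal trick and the coincidence of the two $\eta$'s are available only at $\lambda=0$), not that $\complexstructure_{x_{0}}$ is anti-holomorphic as a map --- the explicit torus formula in \S\ref{subsubsec:example-tori} shows it is not --- but this pointwise vanishing is exactly what is needed to identify your $\partial/\partial\overline{\lambda}$ reading of $(\complexstructure_{x_{0}})_{*}[v]$ with the full (real) directional derivative that the paper's statement refers to, so the proof stands.
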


\begin{proof}
Consider the triangulation $\Delta_{q}$ and
the train track $\tau_{q_{0}}$ on $M_{q}$
for the horizontal foliation $|{\rm Im}(\sqrt{q})|$
of $q$
(cf. \cite[Theorem 5.8]{2012arXiv1203.0273D}).
We may assume that $\Delta_{q}$ contains no horizontal edge
(cf. \cite[Lemma 5.7]{2012arXiv1203.0273D}).
Consider the lifts $\tilde{\Delta}_{q}$ and $\tilde{\tau}_{q}$ of
$\Delta_{q}$ and $\tau_{q}$ to $\tilde{M}_{q}$,
respectivey.
We orient each edge of $\tilde{\Delta}_{q}$ so that
${\rm Im}(\omega_{q})$ is positive along the edge.
Each branch $b$ of $\tilde{\tau}_{q}$ intersects
a unique edge $e_{b}$ of $\tilde{\Delta}_{q}$.
We orient $b$ so that $b\cdot e_{b}=+1$,
where $\cdot$ means the algebraic intersection number.
By taking $U$ sufficiently small,
we may also assume that the marking $\tilde{M}_{q_{0}}\to \tilde{M}_{q}$
induces isomorphisms from $\tilde{\Delta}_{q_{0}}$ and
$\tilde{\tau}_{q_{0}}$
to $\tilde{\Delta}_{q}$ and $\tilde{\tau}_{q}$,
respectively.

From \cite[Theorem 5.8]{2012arXiv1203.0273D},
the tangential map of
the map $\horizontal\colon \quaddiff{x_{0}}\to \mathcal{MF}$
of the horizontal foliations
satisfies 
\begin{equation}
\label{eq:Dumas1}
{\rm Im}\int_{M_{q_{0}}}\frac{\psi_{1}\overline{\psi_{2}}}{4|q_{0}|}
=\ThurstonSymplectic{\horizontal_{*}(\psi_{1})}{\horizontal_{*}(\psi_{2})}{\tau_{q_{0}}}
\end{equation}
for $\psi_{1},\psi_{2}\in 
T_{q_{0}}\quaddiff{x_{0}}\cong \quaddiff{x_{0}}$,
where $\ThurstonSymplectic{\cdot}{\cdot}{\tau_{q_{0}}}$
is the Thurston's symplectic form on $W(\tau_{q_{0}})$,
where $W(\tau_{q_{0}})$ is the space
of real valued functions on the branches of $\tau_{q_{0}}$
satisfying the switch condition.
It is known that the space $W(\tau_{q_{0}})$
is identified with the tangent space of $\mathcal{MF}$
at $\horizontal(q_{0})$
since $\tau_{q_{0}}$ is maximal
(cf. \cite[\S3.2]{MR1144770}).

From the definition,
$|{\rm Im}(\omega_{q_{F,x}})|$ is the lift of the horizontal foliation
of $q_{F,x}$ to $\tilde{M}_{q_{F,x}}$.
By the definition of the orientation of edges of $\tilde{\Delta}_{q_{0}}$,
the image of the map
$N_{1}\ni x\mapsto \horizontal(q_{F,x})\in \mathcal{MF}$
at $x$ is identified with $w_{x}\in W(\tilde{\tau}_{q_{0}})$
which is defined by
$$
w_{x}(b)=\int_{b}|{\rm Im}(\omega_{q_{F,x}})|
=\int_{b}{\rm Im}(\omega_{q_{F,x}}).
$$
From Lemmas \ref{lem:Douady-Hubbard} and \ref{lem:transversal},
the image of $v\in T_{x_{0}}\teich{g,m}$ under the differential of the map
$N_{1}\ni x\mapsto {\rm Im}(\chi_{q_{F,x}})$
is identified with
the homomorphism
\begin{equation}
\label{eq:derivative1}
V_{v}
\colon H_{1}(\tilde{M}_{q_{0}})
\ni c\mapsto 
-2{\rm Im}\int_{c}
\frac{(\pi_{q_{0}})^{*}(\eta_{v})}{\omega_{q_{0}}}
\in \mathbb{R}.
\end{equation}
Indeed,
when $\{q_{\lambda}\}_{\lambda}\in D$
is taken as Lemma \ref{lem:transversal},
\eqref{eq:derivative1}
is derived from
\begin{align*}
\frac{\partial {\rm Im}(\chi_{q_{\lambda}}(c))}{\partial t}
&=
-i\left(\frac{\partial \chi_{q_{\lambda}}(c)}{\partial \lambda}
+\frac{\partial \chi_{q_{\lambda}}(c)}{\partial \overline{\lambda}}
\right)
\\
&=-i\int_{c}
\overline{
\left(
\frac{(\pi_{q_{\lambda}})^{*}(\eta_{\lambda})}{\omega_{q_{\lambda}}}
\right)}
-\frac{(\pi_{q_{\lambda}})^{*}(\eta_{\lambda})}{\omega_{q_{\lambda}}}
\\
&=-2{\rm Im}\int_{c}
\frac{(\pi_{q_{\lambda}})^{*}(\eta_{\lambda})}{\omega_{q_{\lambda}}}
\end{align*}
where $\lambda=t+is$,
since the real part of $\chi_{q_{\lambda}}$ is constant on $D$.

Let $p\colon \tilde{\tau}_{q_{0}}\to \tau_{q_{0}}$ be the
induced covering.
For a weight $w\in W(\tau_{q_{0}})$,
we define $C_{w}\in H_{1}(\tilde{M}_{q_{0}},\mathbb{R})^{-}$
as the homology class of a cycle
$$
\sum_{\mbox{$b$:branches of $\tilde{\tau}_{q_{0}}$}}w(p(b))b.
$$
Then,
\begin{align*}
w_{x}(C_{w})
&=
\sum_{b}w(p(b))w_{x}(b)
\\
V_{v}(C_{w})
&=
-2{\rm Im}\int_{C_{w}}
\frac{(\pi_{q_{0}})^{*}(\eta_{v})}{\omega_{q_{0}}}
=
-2
\sum_{b}w(p(b))
{\rm Im}\int_{b}
\frac{(\pi_{q_{0}})^{*}(\eta_{v})}{\omega_{q_{0}}}.
\end{align*}
These equations imply that
the differential of the map $N_{1}\ni x\mapsto w_{x}\in W(\tilde{\tau}_{q_{0}})$ at $x=x_{0}$ is equal to the weight
(which we abbreviate as)
$$
V_{v}(b)=-2\,{\rm Im}\int_{b}
\frac{(\pi_{q_{0}})^{*}(\eta_{v})}{\omega_{q_{0}}}.
$$
The weight $V_{v}$ is invariant under the action of covering
transformation $r_{0}$ of $\tilde{M}_{q_{0}}\to M_{q_{0}}$
on $\tilde{\tau}_{q_{0}}$,
because
for any branch $b$ of $\tilde{\tau}_{q_{0}}$,
the orientation of $r_{0}(b)$ is opposite to the orientation induced
from $b$.
Thus,
$V_{v}$ descends to a weight $p_{*}(V_{v})$
on $\tau_{q_{0}}$.

Let $\psi_{0}=(\complexstructure_{x_{0}})_{*}[v]$.
For all $\psi\in \quaddiff{x_{0}}\subset T_{q_{0}}\quaddiff{g,m}$,
we define $w(\psi)\in W(\tilde{\tau}_{q_{0}})$ by
$$
w(\psi)(b)={\rm Im}\int_{b}\frac{(\pi_{q_{0}})^{*}(\psi)}{2\omega_{q_{0}}}.
$$
Then,
we have
$\horizontal_{*}[\psi]=p_{*}(w(\psi))$
(cf. \cite[Proof of Theorem 5.8]{2012arXiv1203.0273D}).
By the definition of $\complexstructure_{x_{0}}$,
$\horizontal\circ \complexstructure_{x_{0}}(x)=\horizontal(q_{F,x})$
for all $x\in N_{1}$.
Hence,
$\horizontal_{*}[\psi_{0}]=p_{*}(V_{v})$
as elements of $W(\tilde{\tau}_{q_{0}})
\cong T_{\horizontal(q_{0})}\mathcal{MF}$.
From \eqref{eq:Dumas1}
and \cite[(5.4)]{2012arXiv1203.0273D},
the following holds for all $\psi\in \quaddiff{x_{0}}$:
\begin{align*}
{\rm Im}\int_{M_{q_{0}}}\frac{\psi_{0}\overline{\psi}}{4|q_{0}|}
&=\frac{1}{2}\,{\rm Im}\int_{\tilde{M}_{q_{0}}}
\frac{(\pi_{q_{0}})^{*}(\psi_{0})\overline{(\pi_{q_{0}})^{*}(\psi)}}
{4|\omega_{q_{0}}|^{2}}
\\
&=
\frac{1}{2}\,\ThurstonSymplectic{V_{v}}{w(\psi)}{\tilde{\tau}_{q_{0}}} \\
&=
-2\int_{\tilde{M}_{q_{0}}}
{\rm Im}\frac{(\pi_{q_{0}})^{*}(\eta_{v})}{2\omega_{q_{0}}}
\wedge
{\rm Im}\frac{(\pi_{q_{0}})^{*}(\psi)}{2\omega_{q_{0}}} \\
&=
-{\rm Re}\int_{\tilde{M}_{q_{0}}}
\frac{(\pi_{q_{0}})^{*}(\eta_{v})}{2\omega_{q_{0}}}
\wedge
\overline{\frac{(\pi_{q_{0}})^{*}(\psi)}{2\omega_{q_{0}}}}
\\
&=
-2{\rm Im}\int_{\tilde{M}_{q_{0}}}
\frac{(\pi_{q_{0}})^{*}(\eta_{v})\overline{(\pi_{q_{0}})^{*}(\psi)}}
{4|\omega_{q_{0}}|^{2}}
=
-{\rm Im}\int_{M_{q_{0}}}\frac{\eta_{v}\overline{\psi}}{|q_{0}|}.
\end{align*}
Therefore,
we have
$(\complexstructure_{x_{0}})_{*}[v]
=\psi_{0}=-4\eta_{v}$.
\end{proof}

\section{Appendix}

\subsection{Alternative approach to Gardiner's formula}
\label{subsec:Appendix-Gardiner-formula}
Applying our method in Theorem \ref{thm:leviform},
one can also get an alternative proof of
Gardiner's formula for the derivative of 
extremal length in some case.
Indeed,
this is the case where we need in the proof of Theorem \ref{thm:pluri-subharmonicity}.

Let $F\in \mathcal{MF}$ and $x_{0}=(M,f)\in \teich{g,m}$.
Suppose that the Hubbard-Masur differential $q_{0}=q_{F,x_{0}}$
is generic.
Let $v=[\dot{\mu}]$ and $\{q_{\lambda}\}_{\lambda\in D}$
as the proof of Theorem \ref{thm:leviform}.
Suppose $x_{\lambda}=\pi(q_{\lambda})$ varies holomorphically.
By applying the discussion in the proof of
Theorem \ref{thm:leviform},
from Lemma \ref{lem:transversal},
we have
\begin{align*}
&\left.
\frac{\partial}{\partial\lambda}
\ext_{x_{\lambda}}(F)
\right|_{\lambda=0}\\
&=
\frac{i}{4}
\sum_{k=1}^{\tilde{g}}
\left(
\int_{\alpha_{k}}\overline{
\left(
\frac{(\pi_{q_{0}})^{*}(\eta_{v})}{\omega_{q_{0}}}
\right)
}\int_{\beta_{k}}\overline{\omega_{q_{0}}}
-
\int_{\beta_{k}}\overline{
\left(
\frac{(\pi_{q_{0}})^{*}(\eta_{v})}{\omega_{q_{0}}}
\right)
}
\int_{\alpha_{k}}\overline{\omega_{q_{0}}}
\right) \\
&\quad
+
\frac{i}{4}
\sum_{k=1}^{\tilde{g}}
\left(
\int_{\alpha_{k}}\omega_{q_{0}}
\int_{\beta_{k}}
\overline{
\left(
-\frac{(\pi_{q_{0}})^{*}(\eta_{v})}{\omega_{q_{0}}}
\right)
}
-
\int_{\beta_{k}}\omega_{q_{0}}
\int_{\alpha_{k}}\overline{
\left(-
\frac{(\pi_{q_{0}})^{*}(\eta_{v})}{\omega_{q_{0}}}
\right)
}
\right) \\
&=
\frac{i}{4}
\int_{\tilde{M}_{q_{0}}}
\overline{
\left(
\frac{(\pi_{q_{0}})^{*}(\eta_{v})}{\omega_{q_{0}}}
\right)}
\wedge \overline{\omega_{q_{0}}}
-
\frac{i}{4}
\int_{\tilde{M}_{q_{0}}}\omega_{q_{0}}
\wedge 
\overline{
\left(
\frac{(\pi_{q_{0}})^{*}(\eta_{v})}{\omega_{q_{0}}}
\right)}
\\
&=-\frac{1}{2}
\int_{\tilde{M}_{q_{0}}}
\frac{\overline{(\pi_{q_{0}})^{*}(\eta_{v})}}{|\omega_{q_{0}}|^{2}}
\omega_{q_{0}}^{2}
=
-\int_{M_{q_{0}}}
\frac{\overline{\eta_{v}}}{|q_{0}|}q_{0}
=
-\int_{M}
\dot{\mu}\,q_{F,x_{0}}
\end{align*}
because $M=M_{q_{0}}$ by definition.

\subsection{Examples}
\label{subsec:examples}
In this section,
we treat the exceptional cases
in the proofs of pluriharmonicity in the previous sections.
\subsubsection{Levi forms for the case of flat tori}
\label{subsubsec:tori}
We start to consider
the Levi forms of extremal length functions on the Teichm\"uller space
of flat tori (once punctured tori),
and check our formula in Theorem \ref{thm:leviform} also valid in this case.
This case is excluded from our assumption in the theorem.
However,
this is a motivated example of our result.

The Teichm\"uller space of a flat torus is identified with the upper half plane $\mathbb{H}$.
A point $\tau\in \mathbb{H}$ corresponds to a marked Riemann surface
$M_{\tau}$
which is defined as the quotient space of $\mathbb{C}$
by a lattice generated by $z+1$ and $z+\tau$.
Notice that the space $\mathcal{PMF}=\mathcal{PMF}(M_{i})$
of projective measured foliations on $M_{i}$
($i=\sqrt{-1}\in \mathbb{H}$)
is identified with the projective space $\mathbb{RP}^{1}=\{[x\colon y]\mid
(x,y)\ne (0,0)\}$.
A point $[x\colon y]\in \mathbb{RP}^{1}$
corresponds to
a measured foliation whose leaves are parallel to the line of direction $x+yi$.

Let $\alpha$ be a closed curve on $M_{i}$
corresponding to $[1\colon 0]\in \mathcal{PMF}$.
Then,
the Jenkins-Strebel differential $q_{\alpha,\tau}$ on $M_{\tau}$
for $\alpha$
and
the extremal length function of $\alpha$ at $M_{\tau}$
are obtained as
\begin{align*}
q_{\alpha,\tau}&=-\frac{1}{{\rm Im}(\tau)^{2}}dz^{2}\\
\ext_{\tau}(\alpha)
&=\frac{1}{{\rm Im}(\tau)}.
\end{align*}
Fix $\tau_{0}\in \mathbb{H}$.
For $V\in \mathbb{C}$,
let $v$ be the tangent vector at $\tau_{0}$
corresponding to the infinitesimal quasiconformal
deformation
from $M_{\tau_{0}}$ to $M_{\tau_{0}+\lambda V}$ as $\lambda\to 0$.
The Levi form (Laplacian) of the extremal length function is
\begin{equation}
\label{eq:laplacian-extremal-length-function1}
\levi{\ext_{\cdot}(\alpha)}[v,\overline{v}]=
\left.
\frac{\partial^{2}}{\partial \lambda\partial\overline{\lambda}}
\ext_{\tau_{0}+\lambda V}(\lambda)
\right|_{\lambda=0}
=\frac{|V|^{2}}{2{\rm Im}(\tau_{0})^{3}}
\end{equation}
The Beltrami differential $\mu(\lambda)$
of the quasiconformal deformation
from $M_{\tau_{0}}$ to $M_{\tau_{0}+\lambda V}$
behaves
$$
\mu(\lambda)=\lambda\cdot \dot{\mu}+o(|\lambda|)=
\lambda\cdot \frac{iV}{2{\rm Im}(\tau_{0})}\frac{d\overline{z}}{dz}+o(|\lambda|)
$$
as $t\to 0$.
By definition,
$v=[\dot{\mu}]$.
We define
a holomorphic quadratic differential $\eta_{v}$ on $M_{\tau_{0}}$ by 
\begin{equation}
\label{eq:laplacian-extremal-length-function3}
\eta_{v}=-\frac{i\overline{V}}{2{\rm Im}(\tau_{0})^{3}}dz^{2}.
\end{equation}
This differential satisfies that
$$
\langle v,\psi\rangle=
\int_{M_{\tau_{0}}}\dot{\mu}\psi=
\int_{M_{\tau_{0}}}\frac{\overline{\eta_{v}}}{|q_{\alpha,\tau_{0}}|}\psi
$$
for all holomorphic quadratic differential $\psi$ on $M_{\tau_{0}}$.
Therefore,
the right-hand side of the formula in Theorem \ref{thm:leviform}
is equal to
$$
2\int_{M_{\tau_{0}}}\frac{|\eta_{v}|^{2}}{|q_{\alpha,\tau_{0}}|}
=\frac{|V|^{2}}{2{\rm Im}(\tau_{0})^{3}},
$$
which coincides with the right-hand side of \eqref{eq:laplacian-extremal-length-function1}.

Let $F$ be a measured foliation
corresponding to $[a\colon b]\in \mathcal{PMF}$ with $b\ne 0$.
We normalize $F$ with $i(\alpha,F)=1$.
Then the Hubbard-Masur differential $q_{F,\tau}$ on $M_{\tau}$
and the extremal length function for $F$
are
\begin{align*}
q_{F,\tau}
&=-\frac{(a+b\overline{\tau})^{2}}{b^{2}{\rm Im}(\tau)^{2}}dz^{2}
\\
\ext_{\tau}(F)
&=
\frac{|a+b\tau|^{2}}{b^{2}{\rm Im}(\tau)}.
\end{align*}
Let $v$ be the tangent vector at $\tau_{0}$
corresponding to
$\dot{\mu}=\frac{iV}{2{\rm Im}(\tau_{0})}\frac{d\overline{z}}{dz}$.
Then,
\begin{equation}
\label{eq:laplacian-extremal-length-function2}
\levi{\ext_{\cdot}(\alpha)}[v,\overline{v}]=
\left.
\frac{\partial^{2}}{\partial \lambda\partial\overline{\lambda}}
\ext_{\tau_{0}+\lambda V}(\lambda)
\right|_{\lambda=0}
=\frac{|a+b\tau_{0}|^{2}}{2b^{2}{\rm Im}(\tau_{0})^{3}}|V|^{2}
\end{equation}
In this case,
the differential $\eta_{v}\in \quaddiff{\tau_{0}}$
is
\begin{equation}
\label{eq:laplacian-extremal-length-function4}
\eta_{v}=-\frac{i|a+b\tau_{0}|^{2}
\overline{V}}{2b^{2}{\rm Im}(\tau_{0})^{3}}dz^{2}.
\end{equation}
and hence
$$
2\int_{M_{\tau_{0}}}\frac{|\eta_{v}|^{2}}{|q_{F,\tau_{0}}|}
=\frac{|a+b\tau_{0}|^{2}}{2b^{2}{\rm Im}(\tau_{0})^{3}}|V|^{2},
$$
which coincides with the right-hand side of \eqref{eq:laplacian-extremal-length-function2}.

\subsubsection{The gemetric interpretation of $\eta_{v}$ for the case of tori}
\label{subsubsec:example-tori}
We check Theorem \ref{thm:duality} in the case of flat tori
(once punctured tori).
We use the notation defined in \S\ref{subsubsec:tori} frequently.

First we consider the case $\alpha=[1\colon 0]$.
Fix $\tau_{0}\in \mathbb{H}$.
Since
the projective class of the horizontal foliation of $q_{\alpha,\tau}$ corresponds to
a projective class $[-{\rm Re}(\tau)\colon 1]\in \mathbb{RP}^{1}$,
the underlying foliation of
the horizontal foliation $J_{\tau_{0}}(\tau)$
is foliated by
the lines (leaves) directed to $-({\rm Re}(\tau))+\tau_{0}$.
Since the horizontal foliations of $J_{\tau_{0}}(\tau)$ and $q_{\alpha,\tau}$
are same,
$$
\int_{\alpha}|{\rm Im}\sqrt{J_{\tau_{0}}(\tau)}|
=i(\horizontal(J_{\tau_{0}}(\tau)),\alpha)
=i(\horizontal(q_{\alpha,\tau}),\alpha)
=
\int_{\alpha}|{\rm Im}\sqrt{q_{\alpha,\tau}}|
$$
holds.
Therefore,
we can check that
$$
J_{\tau_{0}}(\tau)=
\left(
\frac{-{\rm Re}(\tau)+\overline{\tau_{0}}}
{{\rm Im}(\tau){\rm Im}(\tau_{0})}
\right)^{2}
dz^{2}
\in \quaddiff{\tau_{0}}
$$
for $\tau\in \mathbb{H}$.
Let $v=[\dot{\mu}]$ be a tangent vector at $\tau_{0}$,
where
$\dot{\mu}=\frac{iV}{2{\rm Im}(\tau_{0})}\frac{d\overline{z}}{dz}$
with $V\in \mathbb{C}$
as \S\ref{subsubsec:tori}.
Then,
from the above calculation,
we obtain
$$
(J_{\tau_{0}})_{*}[v]=
\left(2\frac{i\overline{V}}{{\rm Im}(\tau_{0})^{3}}
\right)dz^{2}
\in \quaddiff{\tau_{0}}.
$$
Comparing with \eqref{eq:laplacian-extremal-length-function3},
we get $(J_{\tau_{0}})_{*}[v]=-4\eta_{v}$
as we appeared in Theorem \ref{thm:duality}.

Let $F\in \mathcal{PMF}$ be a measured foliation
corresponding to $[a\colon b]\in \mathcal{PMF}$ with $b\ne 0$
and $i(\alpha,F)=1$.
Then,
the underlying foliation of the horizontal foliation of $q_{F,\tau}$
is foliated by the lines of direction $[a{\rm Re}(\tau)+b|\tau|^{2}\colon a+b{\rm Re}(\tau)]$.
By the similar argument as above,
one can check that
\begin{align*}
J_{\tau_{0}}(\tau)
&=\left(
\frac{-(a{\rm Re}(\tau)+b|\tau|^{2})+(a+b{\rm Re}(\tau))\overline{\tau_{0}}}
{b{\rm Im}(\tau){\rm Im}(\tau_{0})}
\right)^{2}
dz^{2}
\\
(J_{\tau_{0}})_{*}[v]
&=
\left(2\frac{i|a+b\tau_{0}|^{2}\overline{V}}{b^{2}{\rm Im}(\tau_{0})^{3}}
\right)dz^{2}
\in \quaddiff{\tau_{0}}.
\end{align*}
%
From \eqref{eq:laplacian-extremal-length-function4},
we also have
$(J_{\tau_{0}})_{*}[v]=-4\eta_{v}$.

\subsubsection{A comment on the case of $3g-3+m=1$}
The case of once punctured tori is treated in the same way
as that in the case of flat tori.
Indeed,
the canonical completion at the puncture gives an isomorphism between
the Teichm\"uller space of once punctured tori and that of flat tori.
The extremal length functions in the both spaces
are identified under the isomorphism.
The case of fours punctured sphere
is treated in the similar way.


\bibliographystyle{plain}
\bibliography{References-2}

\end{document}